\def\E{\mathcal{E}}
\def\F{\mathcal{F}}
\def\M{\mathcal{M}}
\def\N{\mathbb{N}}
\def\R{\mathbb{R}}
\def\U{\mathcal{U}}
\def\K{{\mathcal{K}}}
\newcommand{\fn}{\footnote}
\theoremstyle{plain}
\newtheorem{thm}{Theorem}[section]
\newtheorem{cor}{Corollary}[section]
\newtheorem{lem}{Lemma}[section]
\newtheorem{prop}{Proposition}[section]
\newtheorem*{Fatou}{Fatou's lemma}
\newtheorem*{aux}{Auxiliary Theorem}
\theoremstyle{definition}
\newtheorem{assmp}{Assumption}[section]
\newtheorem{dfn}{Definition}[section]
\newtheorem{rem}{Remark}[section]
\numberwithin{equation}{section}
\title{Fatou's Lemma, Galerkin Approximations and the Existence of Walrasian Equilibria in Infinite Dimensions\thanks{This essay was written during Sagara's visit to Johns Hopkins University, August 13--21, 2016. The authors are grateful to Rich McLean for a  passing remark that shaped the introduction to the essay, and to  Takashi Suzuki  for wide-ranging conversations regarding Walrasian general equilibrium theory.   This research is supported by JSPS KAKENHI Grant No.\,26380246 from the Ministry of Education, Culture, Sports, Science and Technology, Japan.}}
\date{\today}
\author{M. Ali Khan\thanks{Corresponding author.}  \\{\small Department of Economics, The Johns Hopkins University} \\{\small Baltimore, MD 21218, United States} \\{\small e-mail: akhan@jhu.edu}\and \\
Nobusumi Sagara 
\\{\small Department of Economics, Hosei University} \\
{\small 4342, Aihara, Machida, Tokyo 194--0298, Japan} \\
{\small e-mail: nsagara@hosei.ac.jp}}
\begin{document}
\maketitle
\setcounter{page}{0}
\thispagestyle{empty}
\clearpage

\begin{abstract}
\noindent
This essay has three objectives: (i) to report recent generalizations of Fatou's lemma to multi-functions taking values in a Banach space, and framed in terms of both Bochner and Gelfand integration; (ii) to delineate the importance of Galerkin approximations in  Walrasian general equilibrium theory with a continuum of agents and commodities; and thereby (iii) to  present two new results on the existence of a Walrasian equilibrium in  economies where the continuum of agents is formalized as a saturated measure space. 
\end{abstract}

\noindent\textbf{Key Words:} Fatou's lemma, Galerkin approximation, Lyapunov convexity theorem, saturated measure space, Bochner integral, Gelfand integral, Walrasian general equilibrium theory, continuum of agents, continuum of commodities. \\

\noindent \textbf{MSC2010:} Primary: 28B05, 28B20, 46G10; Secondary: 65N30, 91B50.
\tableofcontents
\clearpage

\begin{quote}
\textit{The drawing started to tremble because it wanted to go places.  All of a sudden most of art starts to tremble.  Cezanne was always trembling but very precisely.}\footnote{See \textit{The Collected Writings of Willem de Kooning}, Hanuman Press, Madras, pp.\,11, 26.}\hfill W. de Kooning (1949--1950)
\end{quote}

\begin{quote}
\textit{Although this may seem a paradox, all exact science is dominated by the idea of approximation. When a man tells you that he knows the exact truth about anything, you are safe in inferring that he is an inexact man.}\footnote{See  \textit{The Scientific Outlook}, George Allen \& Unwin Ltd.,  London, p.\,165.}\hfill B. Russell (1931)
\end{quote}

\section{Introduction}
Pierre Joseph Louis Fatou (1878--1929) was a French mathematician with an important lemma to his name, a lemma that is one of the cornerstones of  the elementary theory of Lebesgue integration. For a sequence of non-negative integrable functions whose integrals have a finite upper limit, one formulation of it  guarantees  an integrable  function  that is a pointwise upper limit of the sequence and whose integral is equal to or less than that of the upper limit of the integral of the sequence.  In terms of a symbolic expression and the use of notation that is by now conventional,  we have:  

\begin{Fatou}
Let $(T,\Sigma,\mu)$ be a measure space. If $\{ f_n \}_{n\in \N}$ is a sequence of non-negative integrable functions for which $\liminf_n\int f_nd\mu<\infty$, then 
$$
\int\liminf_{n\to\infty}f_nd\mu\le \liminf_{n\to \infty}\int f_nd\mu.
$$ 
\end{Fatou}

\noindent  To be sure, there are variations even in the standard  textbooks: the sequence of functions need not be defined everywhere but almost everywhere; it may not be constituted by non-negative functions; and even if so constituted, the range of the functions may be the extended real line.\fn{Among texts, Fremlin's is the most insistent on the requirement that the sequence of functions {\it not} be defined everywhere, and that doing so leads to pronounced ad-hockery in the more mature parts of the subject. The statement of the lemma presented above can be seen to have been taken from either the Fremlin or the Halmos texts, the difference lying in the meaning given to the terms \lq\lq integrable function."}    There are variations even in the proofs: the lemma is deduced as a straightforward implication of Lebesgue's monotone convergence theorem, or derived {\it ab initio} and used as an instrument for the proof of such a theorem.\fn{Royden's text on {\it Real Analysis} is perhaps the pre-eminent example of this proof-inversion, relying as it does on Littlewood's \lq\lq three principles". His limitation to finite measure spaces helps in this regard; see Chapter 3 in  {\it Real Analysis} (1988), third edition, Macmillan, London. Also, see page 93 of the above text, and the comprehensive treatment of some of these variations, by no means all, in \cite{vy68}. We warn the reader that this sentence is dropped in the fourth edition joint with Fitzpatrick, and a remark added on the assumption of almost everywhere convergence. }   But the basic point  is that there is little more to understand regarding Fatou's lemma as far as real-valued functions are concerned.  As Royden writes, \lq\lq Fatou's lemma and the Monotone Convergence Theorem are very close in the sense that each can be derived from the other using only the fact that  integration is positive and linear." 

As was already understood in the nineteen-seventies,  this is no longer true when one moves from the real line to a finite-dimensional Euclidean space, or more generally, to a Riesz space.  This is simply because the notion of order is weakened and one has to reckon with the fact that the set of cluster points no longer has  a supremum, and the limit function of the sequence with the required properties  has to be found as a selection from a suitable set-theoretic generalization of the notion of an upper limit; see \citet{sc70} and his followers see \cite{hi74,hm71}.  And the situation  becomes even more interesting from a technical point of view when one moves from functions to multifunctions: once the integration of a multifunction is settled upon, a veritable  panoply of measure-theoretic tools is called forth.  In addition to results on measurable selectors, Lyapunov's results on the range of an atomless vector measure begins to attain dominance, as it must, and needs to be invoked.  After all, it is the atomless case that is of consequence;  the purely atomic case, even for $\sigma$-finite measure spaces reduces to the setting of infinite series.  And so what began as a simple cornerstone of the standard theory of integration  has by now attained an architectural multi-facetedness that even when limited to the specific context delimited by the Lyapunov convexity theorem, leads to technical advances that are beyond our scope.  In particular, the recent development that we exclude in this expository article is the consideration of a sequence of multifunctions with unbounded values as in  \cite{bh95,bs05,cmdr05,gp17}. 

One can ask what is in the first place that motivates the consideration of vector-valued functions and multi-functions in the context of Fatou's lemma?  The answer to this question leads into many areas of applied mathematics   including, but not limited to,  optimal control theory, statistical decision theory, the theory of games, and mathematical economics: each field has its own substantive needs and imperatives that lead it to the lemma. In short,    the topic is tremendously vast, and  an answer not only well beyond  the limits of a survey article, but also beyond the competence of the authors writing it.  In this exposition, we limit our focus and attention to Walrasian general equilibrium theory. And here, a rather succinct and sharp answer can be given, one that goes to the very heart of the distinction between an {\it economy} and a {\it game}. 
   
It has been understood since the 1954 paper of Arrow and Debreu that an  economy can be represented as a game by the addition of a fictitious player, and that existence of a Walrasian equilibrium of such an economy deduced as a straightforward consequence of the existence of a Nash equilibrium of the resulting game. But this is less than a full understanding of the problem, and represents only the first step of a two-step argument. A game formalizes interaction between players pursuing conflicting objectives, and a Nash equilibrium takes as given their objectives and choice sets; a Walrasian equilibrium, on the other hand, formalizes, at least in its classical conception that excludes so-called externalities, interaction only between  an individual player and the price system.  This is to say that by necessity the dual space of prices is brought into the picture, and the action or budget sets in which players choose their actions are themselves dependent consequences of their actions. As such, the assumption of given compact action sets, so appropriate  for an analysis of a  game, is just an intermediate auxiliary step for the analysis of an exchange economy where the action set  is itself determined in equilibrium, and given the eminent possibility of some commodities having zero prices in equilibrium, is certainly not compact.  The coordination of actions has to be brought out by aggregation of these actions, and then furthermore, on two additional considerations to bound these actions: (i) on the reliance of the finiteness of aggregate resources  of the economy, and (ii) on the concurrent impossibility of their becoming infinite as a consequence of the technological possibilities available to the economy, to bound the actions.  These considerations are missing in a game by definition.   And it is precisely these considerations that require a second step whereby the arbitrary compactifications introduced in the first Arrow--Debreu step have to be supplemented by an argument that emphasizes the careful choice of these compactifications and  establishes their consequent irrelevancy.  
   
For economies with a continuum of agents, it is this second step that  necessitates Fatou's lemma;  and when each agent in a continuum of agents is made to  choose from a budget set pertaining to a continuum of commodities, it necessitates a conception of aggregation as integration, and thereby necessitates      a generalization of the lemma to integrable multifunctions taking values  in infinite-dimensional commodity spaces. The idealized equilibrium for the idealized economy is thus determined as the limit of  the  equilibria of a  truncated or perturbed economy, to be sure also an idealization in another register, but one that is manageable as a game. It is as if the idealized shape of the object at hand is determined by studying the trembles of that shape, which is to say, by focusing on a  sequence of functions that have a well-behaved limit for the idealization. If there is any originality in this expository survey of recent developments of Fatou's lemma as applied to Walrasian equilibrium theory, it lies in our giving rather salient prominence to these  truncations and perturbations  --- trembles if one wishes --- in their formalization as Galerkin approximations. Such approximations have been implicitly used ever since the initiation of the subject in the work of Bewley and his followers. 
   
The structure of this survey is then rather straightforward. We begin with a quick view of the technical development of Fatou's lemma in a finite-dimensional Euclidean space  and its application to finite-dimensional Walrasian   general equilibrium theory in conjunction with the Lyapunov convexity theorem. In Section 3, we turn to the Banach space setting, and provide for the convenience of the reader a self-contained treatment of the relevant measure-theoretic notions:   measurable multifunctions and their measurable selectors, integration of multifunctions, and most importantly in terms of the recent developments in the subject, the role of the so-called saturation property in Lyapunov's convexity theorem.  It is the latter that leads us to distinguish between the approximate and exact versions of Fatou's lemma in the context of the two prevailing notions of integration. This distinction between the exact and the approximate is a different kind of tremble, one that pertains to the lemma itself rather than to its application.  After a brisk treatment of Galerkin approximation, we turn to the application in Section 7 and present our main findings formalized as Theorems \ref{WE1} and \ref{WE2} pertaining to infinite-dimensional Walrasian general equilibrium theory.  We conclude the survey by picking up again some of the issues with which we begin this  introduction.

\section{Background of the Problem}
This  section introduces the reader to the initial motivation and pioneering efforts that led to the development of the subject  in the context of a  finite-dimensional Euclidean space.

\subsection{Fatou's Lemma in Finite Dimensions}
\label{fat1}
For any vectors $x$ and $y$ in $\R^k$, where $x^i$ and $y^i$ are their $i$th components respectively, define the vector inequalities by: 
\begin{align*}
& x\ge y \Longleftrightarrow x^i\ge y^i,\  i=1,\dots,k; \\
& x>y \Longleftrightarrow x\ge y \text{ and }x\ne y; \\
& x\gg y \Longleftrightarrow x^i>y^i,\ i=1,\dots,k.
\end{align*}
The nonnegative orthant of $\R^k$ is denoted by $\R^k_+=\{x\in \R^k\mid x\ge 0 \}$ and the positive orthant of $\R^k$ is denoted by $\R^k_{++}=\{x\in \R^k\mid x\gg 0 \}$. 

The \textit{upper limit} of a sequence $\{ F_n \}_{n\in \N}$ of subsets in $\R^k$ is defined by
$$
\mathrm{Ls}\left\{ F_n \right\}=\left\{ x\in E \mid \exists \left\{ x_{n_i} \right\}_{i\in \N}: x=\lim_{i\to \infty}x_{n_i},\,x_{n_i}\in F_{n_i}\,\forall i\in \mathbb{N} \right\},
$$
where $\{ x_{n_i} \}_{i\in \N}$ denotes a subsequence of $\{ x_n \}_{n\in \N}\subset \R^k$. 

Let $(T, \Sigma,\mu)$ be a measure space. In the rest of the essay $(T, \Sigma,\mu)$ is always assumed to be complete without any explicit mention. Denote by $L^1(\mu)$ the space of ($\mu$-equivalence classes) of integrable functions on $T$ and by $L^\infty(\mu)$ the space of ($\mu$-equivalence classes) of essentially bounded measurable  functions on $T$. The space of ($\mu$-equivalence classes) of integrable functions from $T$ to $\R^k$, the $k$-dimensional Euclidean space, is denoted by $L^1(\mu,\R^k)$. A set $A\in \Sigma$ with $\mu(A)>0$ is called an \textit{atom} of a measure $\mu$ if $B\subset A$ with $B\in \Sigma$ implies $\mu(A)=\mu(B)$ or $\mu(B)=0$. A measure is said to be a \textit{nonatomic} if it possesses no atom. The classical Lyapunov convexity theorem says that the range $m(\Sigma)$ of a vector measure $m:\Sigma \to \R^k$ is compact and convex whenever its component measure $m^i:\Sigma\to \R$ with $m=(m^1,\dots,m^k)$ is a nonatomic finite signed measure for each $i=1,\dots,k$ (see \cite[Corollary IX.1.5]{du77} or \cite[Theorem 5.5]{ru91}).  

Curiously, the first Fatou's lemma in finite dimensions appeared in the multifunction case in the work of \cite{au65}. To state the result, denote by $\M^1(\mu,\R^k)$ the set of integrably bounded multifunctions from $T$ to $\R^k$. 

\begin{thm}[\citet{au65}]
\label{au1}
Let $(T,\Sigma,\mu)$ be a nonatomic finite measure space. If $\{ \Gamma_n \}_{n\in \N}$ is a well-dominated sequence\footnote{See Subsection \ref{bo} for the definition of \textit{well dominance}.} of multifunctions in $\M^1(\mu,\R^k)$, then: 
$$
\mathrm{Ls}\left\{ \int \Gamma_nd\mu \right\}\subset \int \mathrm{Ls}\left\{ \Gamma_n \right\}d\mu. 
$$
\end{thm}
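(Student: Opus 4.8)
The plan is to take an arbitrary point $y\in\mathrm{Ls}\{\int\Gamma_n\,d\mu\}$ and to produce an integrable selector of the limit-superior multifunction whose integral is exactly $y$. By the definition of the upper limit there are a subsequence (which I relabel as the full sequence) and integrable selectors $f_n$ of $\Gamma_n$ with $\int f_n\,d\mu\to y$. Well dominance furnishes a single $h\in L^1(\mu)$ with $|f_n(t)|\le h(t)$ for a.e.\ $t$ and all $n$, so $\{f_n\}$ is uniformly integrable; by the Dunford--Pettis theorem it is relatively weakly compact in $L^1(\mu,\R^k)$. Passing to a further subsequence, $f_n\rightharpoonup f$ weakly for some $f\in L^1(\mu,\R^k)$, and testing against the constant functions $e_i$, $i=1,\dots,k$ (which lie in $L^\infty(\mu)$ since $\mu$ is finite) gives $\int f\,d\mu=\lim_n\int f_n\,d\mu=y$.

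The crux is to locate $f$ pointwise. By Mazur's lemma there are finite convex combinations $g_m=\sum_{n\ge m}\lambda^m_n f_n$ with $g_m\to f$ strongly in $L^1$, and after one more subsequence $g_m(t)\to f(t)$ for a.e.\ $t$. Since $g_m(t)\in\mathrm{co}\bigcup_{n\ge m}\Gamma_n(t)$, it follows that $f(t)\in\bigcap_m\overline{\mathrm{co}}\,\bigcup_{n\ge m}\Gamma_n(t)$. The uniform bound $h(t)$ makes each $\overline{\bigcup_{n\ge m}\Gamma_n(t)}$ compact, and a short Carath\'eodory-plus-compactness argument then identifies this intersection with $\mathrm{co}\,\mathrm{Ls}\{\Gamma_n(t)\}$; here one uses that in $\R^k$ the convex hull of the compact set $\mathrm{Ls}\{\Gamma_n(t)\}$ is already closed. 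Thus $f$ is an integrable selector of $t\mapsto\mathrm{co}\,\mathrm{Ls}\{\Gamma_n(t)\}$, so that $y\in\int\mathrm{co}\,\mathrm{Ls}\{\Gamma_n\}\,d\mu$. Note that the limit-superior multifunction is measurable and integrably bounded by $h$, so every integral in sight is well defined.

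It remains only to erase the convex hull, and this is exactly where nonatomicity enters: by the convexity of the Aumann integral over a nonatomic measure space --- the classical consequence of the Lyapunov convexity theorem recalled above --- one has $\int\mathrm{co}\,\Gamma\,d\mu=\int\Gamma\,d\mu$ for every integrably bounded measurable $\Gamma$. Applying this to $\Gamma=\mathrm{Ls}\{\Gamma_n\}$ yields $y\in\int\mathrm{Ls}\{\Gamma_n\}\,d\mu$, as required. I expect the main obstacle to be precisely this convexification gap: weak-$L^1$ limits land only in the \emph{convex} hull of the pointwise limit superior, and without the Lyapunov theorem --- hence without nonatomicity --- the asserted inclusion simply fails. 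The subsidiary technical points, namely uniform integrability from well dominance, the weak compactness, and the Mazur a.e.\ extraction, are routine once the domination is in hand.
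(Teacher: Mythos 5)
Your proof is correct and follows essentially the same route as the paper: the paper reduces Theorem \ref{au1} to its single-valued Fatou lemma (Theorem \ref{thm1}), whose proof is exactly your chain of Dunford--Pettis weak compactness, localization of the weak $L^1$-limit in the convex hull of the pointwise upper limit, and the Lyapunov convexity theorem to erase the hull. The only substantive difference is that you prove the localization step yourself, via Mazur's lemma together with the Carath\'eodory-plus-compactness identification of $\bigcap_m \overline{\mathrm{co}}\bigcup_{n\ge m}\Gamma_n(t)$ with $\mathrm{co}\,\mathrm{Ls}\{\Gamma_n(t)\}$, whereas the paper cites this as Lemma \ref{lem1} (Artstein; Hildenbrand--Mertens).
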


\noindent The specialization to the (single-valued) function case  first appeared in \cite{sc70} in  the following form. 

\begin{thm}[\citet{sc70}]
\label{sc}
Let $(T,\Sigma,\mu)$ be a nonatomic finite measure space. If $\{ f_n \}_{n\in \N}$ is a sequence of integrable functions from $T$ to $\R^k_+$ for which $\lim_n\int f_nd\mu$ exists, then there exists an integrable function $f:T\to \R^k_+$ with the following properties. 
\begin{enumerate}[\rm (i)]
\item $f(t)\in \mathrm{Ls}\left\{ f_n(t) \right\}$ a.e. $t\in T$.
\item $\displaystyle\int fd\mu\le \lim_{n\to \infty}\int f_nd\mu$.
\end{enumerate}
\end{thm}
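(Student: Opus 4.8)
The plan is to exhibit the desired $f$ as a measurable selection of the cluster-point multifunction $\Gamma(t):=\mathrm{Ls}\{f_n(t)\}$, and to control its integral by reducing, through truncation, to the uniformly bounded case, where the already-stated Theorem \ref{au1} does the decisive work. Two preliminary facts would be recorded first. Since each $f_n$ is measurable, $\Gamma$ is a measurable multifunction and hence (by Kuratowski--Ryll-Nardzewski) admits measurable selections; the whole point is to find one obeying the integral bound (ii). Moreover $\Gamma(t)\ne\emptyset$ a.e.: because $f_n\ge 0$ one has $|f_n(t)|_1=\sum_i f_n^i(t)$ and $\int|f_n|_1\,d\mu\to\sum_i a^i<\infty$, where $a:=\lim_n\int f_n\,d\mu\in\R^k_+$, so Fatou's lemma forces $\liminf_n|f_n(t)|_1<\infty$ a.e., i.e. $\{f_n(t)\}$ is bounded and therefore has a cluster point for a.e.\ $t$.

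Next I would dispose of the bounded case. Suppose every $f_n$ takes values in a fixed compact set $K\subset\R^k_+$. Then the constant sequence $\Gamma_n:=\{f_n(\cdot)\}$ is well-dominated (by $\sup_{x\in K}|x|$), so Theorem \ref{au1} applies and gives $\mathrm{Ls}\{\int\Gamma_n\,d\mu\}\subset\int\mathrm{Ls}\{\Gamma_n\}\,d\mu=\int\Gamma\,d\mu$. Since $\int\Gamma_n\,d\mu=\{\int f_n\,d\mu\}$ and $\int f_n\,d\mu\to a$, the point $a$ lies in the left-hand set; hence $a\in\int\Gamma\,d\mu$, which by the definition of the integral of a multifunction produces an integrable selection $f$ of $\Gamma$ with $\int f\,d\mu=a$, giving (i) and (ii) with equality. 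It is precisely here, inside Theorem \ref{au1}, that nonatomicity and the Lyapunov convexity theorem are consumed.

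Finally I would remove the boundedness hypothesis by truncation, and this I expect to be the main obstacle. For $p\in\N$ set $f_n^p:=f_n\wedge(p,\dots,p)$ (componentwise minimum), a sequence bounded by $p$; after a diagonal extraction I may assume $\int f_n^p\,d\mu\to b_p$ for every $p$, and since $f_n^p\le f_n$ and $f_n^p\uparrow f_n$ the vectors $b_p$ are nondecreasing with $b_p\le a$, so $b_p\to b_\infty\le a$. The bounded case applied to $\{f_n^p\}_n$ furnishes selections $f^p$ of $\Gamma^p(t):=\mathrm{Ls}\{f_n^p(t)\}$ with $\int f^p\,d\mu=b_p$. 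The delicate point is to assemble these into a genuine cluster point of the untruncated sequence: on $D^p:=\{t:f^p(t)\ll(p,\dots,p)\}$ one has $f^p(t)\in\Gamma(t)$, because $f_n^p(t)=f_n(t)$ wherever $f_n(t)$ lies strictly below the truncation level, and the estimate $p\,\mu(T\setminus D^p)\le\textstyle\sum_i\int(f^p)^i\,d\mu=|b_p|_1\le|a|_1$ forces $\mu(T\setminus D^p)\to 0$.

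Controlling the limit in $p$ so that the patched function remains an honest selection of $\mathrm{Ls}\{f_n\}$ while its integral does not exceed $a$ is the crux. A cleaner route to this same end, which I would present as the alternative, isolates the concentration of mass by the biting (Brooks--Chacon) lemma: one extracts a subsequence and sets $E_m\downarrow$ with $\mu(E_m)\to 0$ on whose complements $\{f_n\}$ is uniformly integrable, takes a weak $L^1$-limit $g$ there via Dunford--Pettis, checks by a Mazur-type argument that $g(t)\in\overline{\mathrm{co}}\,\Gamma(t)$ a.e., and uses $\int_{T\setminus E_m}g=\lim_n\int_{T\setminus E_m}f_n\le a$ together with $g\ge 0$ and Fatou's lemma to pass to $\int_T g\le a$. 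A final application of the Lyapunov convexity theorem over the nonatomic space then purifies $g$, yielding an integrable $f$ with $f(t)\in\Gamma(t)$ a.e.\ and $\int f\,d\mu=\int g\,d\mu\le a$. Everything outside this boundedness-removal step is either measurable-selection bookkeeping or a direct appeal to the two results already in hand.
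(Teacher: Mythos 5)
The paper never proves Theorem \ref{sc}: it is imported from Schmeidler (1970) with only a commentary on its role, so there is no in-house argument to measure you against; the nearest relative is the proof of Theorem \ref{thm1}. Judged on its own terms, your proposal contains one incomplete route and one complete one, and you have correctly identified which is which. The truncation route stalls exactly where you flag it: the selections $f^p$ restricted to $D^p$ are indeed honest selections of $\mathrm{Ls}\{f_n(\cdot)\}$ and $\mu(T\setminus D^p)\to 0$, but patching them together gives no control on $\int_{D^{p_j}\setminus D^{p_{j-1}}}f^{p_j}d\mu$ --- absent uniform integrability each $f^{p_j}$ may carry mass of order $\sum_i a^i$ on an arbitrarily small set, so the assembled function can violate (ii); as written this route is not a proof. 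The biting-lemma route, by contrast, closes. Since $f_n\ge 0$ and $\int\sum_if_n^i\,d\mu\to\sum_ia^i$, the sequence is $L^1$-bounded; Brooks--Chacon, Dunford--Pettis and a diagonal extraction yield $g\ge 0$ with $f_n\to g$ weakly in $L^1$ on each $T\setminus E_m$; Lemma \ref{lem1} applied on each $T\setminus E_m$ gives $g(t)\in\mathrm{co}\,\mathrm{Ls}\{f_n(t)\}$ a.e.\ (keep the convex hull \emph{unclosured} here, as Lemma \ref{lem1} in fact delivers: $\mathrm{Ls}\{f_n(t)\}$ may be unbounded, and the purification rests on the identity $\int\mathrm{co}\,\Gamma\, d\mu=\int\Gamma\, d\mu$ for measurable closed-valued $\Gamma$ on atomless spaces, which needs no integrable boundedness); monotone convergence gives $\int g\,d\mu\le\lim_n\int f_nd\mu$, and Lyapunov--Richter then produces the selection $f$. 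This is, in effect, the modern reduction of Schmeidler's statement to the uniformly integrable machinery of Theorem \ref{thm1} and Lemma \ref{lem1}, with the biting lemma trading the lost uniform integrability for the inequality, rather than the equality, in conclusion (ii). Your nonemptiness argument for $\mathrm{Ls}\{f_n(t)\}$ via the scalar Fatou lemma and your disposal of the bounded case via Theorem \ref{au1} are both sound, and non-circular, since the paper derives Theorem \ref{au1} from Theorem \ref{thm1} rather than from Theorem \ref{sc}.
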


\noindent As remarked in \cite[p.\,300]{sc70}: 
\begin{quote}
When $k=1$, the result is a form of Fatou's lemma.\ [...]\ The nontrivial part of the arguments is limited to the case where $T$ is an atomless measure space. In the purely atomic case [Theorem \ref{sc}] is reduced to a simple exercise in series. In any case, [Theorem \ref{sc}] cannot be proved by a successive application of Fatou's lemma $k$ times.
\end{quote}
This observation is based on the fact that the number of atoms of $\mu$ is countable and any measurable function on $T$ is constant on each atom, so that for the nonatomic parts of $T$, the Lyapunov convexity theorem plays a crucial role. It should be noted that the inclusion form at the limit is a new aspect that is entirely absent from the classical Fatou's lemma. 

Meanwhile, an alternative proof of Theorem \ref{sc} appeared in \cite{hm71}, in which the following open question was raised \citep[Remark]{hm71}: 

\begin{quote}
If the sequence $\{ f_n \}_{n\in \N}$ majorized by an integrable function, then there is a function $f$ such that a.e.\ in $T$, $f(t)\in \mathrm{Ls}\,\{ f_n(t) \}$ and $\int fd\mu=\lim_n\int f_nd\mu$. Does this still hold when $\{ f_n \}_{n\in \N}$ is uniformly integrable?
\end{quote}
The affirmative answer was given in the following result. 

\begin{thm}[\citet{ar79}]
\label{ar1}
Let $(T,\Sigma,\mu)$ be a finite measure space. If $\{ f_n \}_{n\in \N}$ is a uniformly integrable sequence in $L^1(\mu,\R^k)$ for which $\lim_n\int f_nd\mu$ exists, then there exists $f\in L^1(\mu,\R^k)$ with the following properties. 
\begin{enumerate}[\rm (i)]
\item $f(t)\in \mathrm{Ls}\left\{ f_n(t) \right\}$ a.e. $t\in T$;
\item $\displaystyle\int fd\mu=\lim_{n\to \infty}\int f_nd\mu$.
\end{enumerate}
\end{thm}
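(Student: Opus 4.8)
The plan is to obtain the desired selection as a weak $L^1$ limit and then to \emph{de-convexify} it using the Lyapunov convexity theorem, the main analytic work being the replacement of the domination hypothesis of the earlier results by uniform integrability. Throughout write $\Gamma(t):=\mathrm{Ls}\{f_n(t)\}$ and $v:=\lim_n\int f_nd\mu$. As a preliminary reduction, I would first check that $\Gamma$ is an a.e.\ nonempty, closed-valued, measurable multifunction: uniform integrability on a finite measure space yields $\sup_n\int|f_n|d\mu<\infty$, so applying the scalar Fatou lemma to $\liminf_n|f_n|$ shows that the set $\{t:|f_n(t)|\to\infty\}$, on which $\Gamma(t)$ is empty, is $\mu$-null. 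I would then split $T=T_a\cup T_0$ into its (countable) purely atomic part $T_a$ and its nonatomic part $T_0$.

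The heart of the construction is the weak limit. By the Dunford--Pettis theorem, uniform integrability makes $\{f_n\}$ relatively weakly sequentially compact in $L^1(\mu,\R^k)$, so I would pass to a subsequence $f_{n_k}\rightharpoonup g$. Testing weak convergence against $\mathbf{1}_A\in L^\infty(\mu)$ gives $\int_Af_{n_k}d\mu\to\int_Agd\mu$ for every $A\in\Sigma$; taking $A=T$ and using the hypothesis yields $\int gd\mu=v$. On each atom $A$ the functions $f_n$ are a.e.\ constant, and $\int_Af_{n_k}d\mu\to\int_Agd\mu$ forces these constants to converge to the a.e.\ value of $g$ on $A$, which is therefore a genuine cluster point; hence $g$ is already a selection of $\Gamma$ a.e.\ on $T_a$, and I set $f:=g$ there. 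On $T_0$ a Mazur-lemma argument (convex combinations of the tail converge strongly, hence a.e.\ along a further subsequence) places $g$ only in the closed convex hull of the asymptotic values, giving the weaker inclusion $g(t)\in\overline{\mathrm{co}}\,\Gamma(t)$.

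It remains to produce, on $T_0$, an integrable selection $f$ of $\Gamma$ itself with $\int_{T_0}fd\mu=\int_{T_0}gd\mu$. Here the nonatomicity enters: were $\Gamma$ integrably bounded, the Lyapunov convexity theorem (through which $\int_{T_0}\Gamma d\mu$ is convex and equals $\int_{T_0}\overline{\mathrm{co}}\,\Gamma d\mu$ for nonatomic $\mu$) would deliver such a selection at once, and this is exactly how the earlier domination-based results proceed. The difficulty --- and the reason uniform integrability rather than domination is the correct hypothesis --- is that $\{f_n\}$ need not admit a common integrable majorant, so $\Gamma$ need not be integrably bounded. I would circumvent this by truncation: apply the convexity theorem to the integrably bounded multifunctions $\Gamma\cap\bar B(0,m)$ to obtain selections with the correct truncated integrals, and then let $m\to\infty$, using uniform integrability to show that the contributions of the truncation tails vanish uniformly in $n$ and thus to pass to the limit with the exact integral preserved.

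Assembling $f$ from its definitions on $T_a$ and $T_0$ produces a measurable, integrable function with $f(t)\in\Gamma(t)=\mathrm{Ls}\{f_n(t)\}$ a.e., establishing (i), and with $\int fd\mu=\int_{T_a}gd\mu+\int_{T_0}gd\mu=v$, establishing (ii). I expect the genuine obstacle to be this final truncation-and-limit step: the passage from a common integrable majorant to mere uniform integrability is precisely what upgrades Schmeidler's inequality in Theorem~\ref{sc} to Artstein's equality, and it is there that the tail estimates afforded by uniform integrability must be combined carefully with the Lyapunov de-convexification rather than invoked in a single stroke.
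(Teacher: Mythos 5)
The skeleton of your argument --- Dunford--Pettis to extract a weak limit $g$, a pointwise inclusion of $g(t)$ in a convexification of the cluster set, and Lyapunov-type de-convexification on the nonatomic part --- is exactly the route the paper takes in its proof of Theorem \ref{thm1}; note that the paper does not actually prove Theorem \ref{ar1} itself but quotes it from Artstein, so the relevant comparison is with that proof. Your treatment of the atoms is a correct and necessary supplement (Theorem \ref{ar1} does not assume nonatomicity, and on an atom weak convergence of the a.e.\ constants against $\mathbf{1}_A$ does force $g$ to be a genuine cluster point there). However, there are two genuine gaps. First, the pointwise inclusion on $T_0$ does not follow from Mazur's lemma alone. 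Mazur gives $g(t)\in\bigcap_k\overline{\mathrm{co}}\{f_n(t):n\ge k\}$, and when the pointwise sequence is unbounded (which uniform integrability permits on a non-null set, since only $\liminf_n|f_n(t)|$ is controlled) this intersection can be strictly larger than $\overline{\mathrm{co}}\,\mathrm{Ls}\{f_n(t)\}$: take $f_n(t)=0$ for $n$ even and $f_n(t)=ne_1$ for $n$ odd, where $\mathrm{Ls}=\{0\}$ but every tail convex hull contains the ray $[0,\infty)e_1$. The inclusion $g(t)\in\overline{\mathrm{co}}\,\mathrm{Ls}\{f_n(t)\}$ is precisely the content of Lemma \ref{lem1}, whose proof uses uniform integrability in an essential way; it must be invoked (or reproved), not replaced by Mazur.

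Second, the de-convexification step as you describe it does not go through. Since $\Gamma(t)=\mathrm{Ls}\{f_n(t)\}$ may be unbounded, $\Gamma(t)\cap\bar B(0,m)$ can be empty on a set of positive measure for every fixed $m$, and even where it is nonempty there is no reason why $g(t)$, known only to lie in $\overline{\mathrm{co}}\,\Gamma(t)$, should lie in $\overline{\mathrm{co}}\bigl(\Gamma(t)\cap\bar B(0,m)\bigr)$; so the truncated multifunctions do not have the function you need as a selection of their convex hulls, and the phrase ``selections with the correct truncated integrals'' has no well-defined target. Moreover, even granted selections $f^{(m)}$ of the truncations whose integrals converge to $\int_{T_0}g\,d\mu$, you would still need either closedness of $\int_{T_0}\Gamma\,d\mu$ (false in general without integrable boundedness) or a pointwise diagonal extraction from the $f^{(m)}$ landing in $\Gamma$ with the exact integral preserved; neither is supplied, and your remark about tails ``vanishing uniformly in $n$'' suggests truncating the $f_n$ rather than $\Gamma$, which changes the cluster sets (truncation injects $0$ as a spurious cluster value). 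This is indeed the crux that separates Artstein's equality from Schmeidler's inequality in Theorem \ref{sc}, and as it stands your proposal identifies the obstacle without overcoming it.
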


The significance of \cite{ar79} does not lie in the solution to the open question per se, but in the eduction of the following result in the course of the proof. It is this  that leads to a variant of Fatou's lemma in  the so-called \lq\lq inclusion form."

\begin{lem}[\citet{ar79,hm71}]
\label{lem1}
Let $(T,\Sigma,\mu)$ be a finite measure space. If $\{ f_n \}_{n\in \N}$ is a uniformly integrable sequence in $L^1(\mu,\R^k)$ such that $f_n\to f$ weakly in $L^1(\mu,\R^k)$, then: 
$$
f(t)\in \mathrm{co}\,\mathrm{Ls}\left\{ f_n(t) \right\} \quad\text{a.e. $t\in T$}.
$$
\end{lem}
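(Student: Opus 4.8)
The plan is to reduce the vector inclusion to a countable family of scalar inequalities obtained from support functions, and then to read those inequalities off from weak convergence. Write $F(t)=\mathrm{Ls}\{f_n(t)\}$ and, for $p\in\R^k$, let $\sigma(t,p)=\sup_{x\in F(t)}\langle p,x\rangle$ be the support function of $F(t)$. Since a closed convex subset of $\R^k$ is the intersection of the closed half-spaces containing it, and since these half-spaces are already detected by a countable dense set $D$ of unit directions, one has $f(t)\in\overline{\mathrm{co}}\,F(t)$ precisely when $\langle p,f(t)\rangle\le\sigma(t,p)$ for every $p\in D$. As a countable intersection of conull sets is conull, it suffices to fix one $p\in D$ and establish $\langle p,f(t)\rangle\le\sigma(t,p)$ for almost every $t$. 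I note in passing that $F(t)\neq\emptyset$ a.e.: uniform integrability gives $\sup_n\|f_n\|_1<\infty$, so Fatou's lemma applied to $|f_n|$ forces $\liminf_n|f_n(t)|<\infty$ a.e., whence $\{f_n(t)\}$ has a convergent subsequence and $\sigma(t,p)>-\infty$ a.e.

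Fix such a $p$ and set $h_n=\langle p,f_n(\cdot)\rangle$ and $h=\langle p,f(\cdot)\rangle$. Testing the weak convergence $f_n\to f$ against $p\,\mathbf 1_A\in L^\infty(\mu,\R^k)$ shows $\int_A h_n\,d\mu\to\int_A h\,d\mu$ for every $A\in\Sigma$, and $\{h_n\}$ is uniformly integrable because $|h_n|\le|f_n|$. The scalar inequality will follow once I show $\int_A h\,d\mu\le\int_A\sigma(\cdot,p)\,d\mu$ for all $A$ contained in $\{t:|\sigma(t,p)|\le K\}$, since choosing $A=\{h>\sigma(\cdot,p)\}\cap\{|\sigma(\cdot,p)|\le K\}$ then forces $\mu(A)=0$, and letting $K\to\infty$ finishes this direction. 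The entire weight of the argument rests on controlling the excess $e_n(t)=(h_n(t)-\sigma(t,p))^+\ge 0$, through which $\int_A h_n\,d\mu\le\int_A\sigma(\cdot,p)\,d\mu+\int_A e_n\,d\mu$; I must prove $\int_A e_n\,d\mu\to 0$.

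This is the main obstacle, and it is exactly where uniform integrability enters to forbid the escape of mass. I would split $e_n$ at a truncation level $M$. On $\{|f_n(t)|\le M\}$ the excess vanishes pointwise: if $h_n(t)-\sigma(t,p)\ge\delta>0$ persisted along a subsequence with $|f_n(t)|\le M$, compactness would extract a convergent sub-subsequence whose limit $x^\ast$ lies in $F(t)$ yet satisfies $\langle p,x^\ast\rangle\ge\sigma(t,p)+\delta$, contradicting the definition of $\sigma(t,p)$; hence $e_n(t)\,\mathbf 1_{\{|f_n(t)|\le M\}}\to 0$, and since this quantity is dominated by $M+K$ on $\{|\sigma(\cdot,p)|\le K\}$, dominated convergence gives $\int_A e_n\,\mathbf 1_{\{|f_n|\le M\}}\,d\mu\to 0$. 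On $\{|f_n(t)|>M\}$ I use $e_n\le|f_n|+|\sigma(\cdot,p)|$ together with the two uniform tail estimates supplied by uniform integrability, namely $\eta(M):=\sup_n\int_{\{|f_n|>M\}}|f_n|\,d\mu\to 0$ and, via Chebyshev, $\sup_n\mu(\{|f_n|>M\})\to 0$, so that $\limsup_n\int_A e_n\,d\mu\le\eta(M)+K\sup_n\mu(\{|f_n|>M\})$ with a right-hand side that vanishes as $M\to\infty$.

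Combining the two parts yields $\int_A e_n\,d\mu\to 0$, hence $\int_A h\,d\mu=\lim_n\int_A h_n\,d\mu\le\int_A\sigma(\cdot,p)\,d\mu$, and the selection of $A$ described above delivers $\langle p,f(t)\rangle\le\sigma(t,p)$ almost everywhere. Intersecting the resulting conull sets over the countable family $D$ gives these inequalities for all $p\in D$ simultaneously, which is the membership $f(t)\in\overline{\mathrm{co}}\,\mathrm{Ls}\{f_n(t)\}$ for almost every $t$; the closure is immaterial wherever $\mathrm{Ls}\{f_n(t)\}$ is compact, in particular whenever $\{f_n(t)\}$ is bounded. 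The only delicate point to keep honest is the interchange of $\limsup_n$ with the $M\to\infty$ limit in the tail estimate, but since the bound on $\limsup_n\int_A e_n\,d\mu$ holds for every $M$ and its right-hand side is independent of $n$, letting $M\to\infty$ is legitimate.
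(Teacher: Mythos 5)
Your argument is essentially correct as a proof of the closed-convex-hull version of the inclusion, but note first that the paper supplies no proof of this lemma at all: it is imported from \cite{ar79,hm71}, and the classical route there is quite different from yours. The standard proofs proceed via Mazur's lemma: weak convergence yields convex combinations $g_m\in\mathrm{co}\{f_n:n\ge m\}$ with $g_m\to f$ in $L^1$-norm, hence a.e.\ along a subsequence, giving $f(t)\in\bigcap_m\overline{\mathrm{co}}\{f_n(t):n\ge m\}$; uniform integrability is then used to identify this intersection with the convex hull of the cluster set by ruling out contributions from mass escaping to infinity. You instead reduce to the countable family of scalar inequalities $\langle p,f(t)\rangle\le\sigma(t,p)$, $p\in D$, verified by testing weak convergence against $p\mathbf{1}_A$ and controlling the excess $e_n=(h_n-\sigma(\cdot,p))^+$; the splitting at the truncation level $M$, the pointwise vanishing of $e_n\mathbf{1}_{\{|f_n|\le M\}}$ by compactness of the ball of radius $M$, and the tail bound $\limsup_n\int_Ae_n\,d\mu\le\eta(M)+K\sup_n\mu(\{|f_n|>M\})$ are all sound, and the localization of exactly where uniform integrability enters is a genuine virtue of your route.

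Two caveats. The substantive one: you establish $f(t)\in\overline{\mathrm{co}}\,\mathrm{Ls}\{f_n(t)\}$, whereas the statement asserts $f(t)\in\mathrm{co}\,\mathrm{Ls}\{f_n(t)\}$ without closure, and your closing remark does not bridge the gap, because pointwise boundedness of $\{f_n(t)\}$ is not among the hypotheses --- uniform integrability only yields $\liminf_n|f_n(t)|<\infty$ a.e., so $\mathrm{Ls}\{f_n(t)\}$ may be closed and unbounded, in which case its convex hull need not be closed. Removing the closure requires the finer analysis of \cite{ar79}. That said, the paper itself only ever invokes the lemma in the form $f_0(t)\in\overline{\mathrm{co}}\,\mathrm{Ls}\{f_n(t)\}$ (see the proof of Theorem \ref{thm1}), so what you prove suffices for every use made of the lemma here. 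The minor one: you use without comment that $t\mapsto\sigma(t,p)$ is measurable, so that the sets $\{h>\sigma(\cdot,p)\}\cap\{|\sigma(\cdot,p)|\le K\}$ lie in $\Sigma$; this follows from the measurability of the closed-valued multifunction $t\mapsto\mathrm{Ls}\{f_n(t)\}=\bigcap_m\overline{\{f_n(t):n\ge m\}}$ on a complete measure space, but it should be stated, since it is the only point at which completeness and the structure of the upper limit are actually needed.
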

\noindent
In this essay we focus on the following prototype of Fatou's lemma in finite dimensions, in which the order structure is completely replaced by inclusions. To crystallize the usefulness of Lemma \ref{lem1} and illustrate the role of nonatomicity in the subject, we provide a proof explored in \cite{km86,ks14a,ya89} for completeness.   

\begin{thm}
\label{thm1}
Let $(T,\Sigma,\mu)$ be a nonatomic finite measure space. If $\{ f_n \}_{n\in \N}$ is a uniformly integrable, bounded  sequence in $L^1(\mu,\R^k)$, then:
\begin{enumerate}[\rm (i)]
\item $\mathrm{Ls}\left\{ \displaystyle\int f_nd\mu \right\}\subset \displaystyle\int\mathrm{Ls}\left\{ f_n \right\}d\mu$. 
\item There exists $f\in L^1(\mu,\R^k)$ such that
\begin{enumerate}[\rm (a)]
\item $f(t)\in \mathrm{Ls}\left\{ f_n(t) \right\}$ a.e.\ $t\in T$;
\item $\displaystyle\int fd\mu\in \mathrm{Ls}\left\{ \int f_nd\mu \right\}$.
\end{enumerate}
\end{enumerate}
\end{thm}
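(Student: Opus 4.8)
The plan is to derive both parts from Lemma \ref{lem1} in tandem with the Lyapunov convexity theorem, so that the whole argument rests on weak compactness in $L^1(\mu,\R^k)$ and on nonatomicity alone. The decisive ingredient is that, for a nonatomic $\mu$, the Aumann integral of a measurable multifunction is unchanged under passage to its pointwise convex hull; this is exactly the point at which Lyapunov's theorem enters and where the role of nonatomicity is laid bare.

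I would prove (i) first. Fix $y\in\mathrm{Ls}\{\int f_n\,d\mu\}$ and pick a subsequence with $\int f_{n_i}\,d\mu\to y$. Since $\{f_n\}_{n\in\N}$ is bounded and uniformly integrable, the Dunford--Pettis theorem gives relative weak compactness in $L^1(\mu,\R^k)$, so, passing to a further subsequence that I do not relabel, $f_{n_i}\to f$ weakly for some $f\in L^1(\mu,\R^k)$. Testing weak convergence against constant vectors yields $\int f\,d\mu=\lim_i\int f_{n_i}\,d\mu=y$. Applying Lemma \ref{lem1} to this subsequence gives $f(t)\in\mathrm{co}\,\mathrm{Ls}\{f_{n_i}(t)\}\subset\mathrm{co}\,\mathrm{Ls}\{f_n(t)\}$ a.e., the last inclusion holding because every cluster point of a subsequence is a cluster point of the full sequence. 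Hence $y=\int f\,d\mu\in\int\mathrm{co}\,\mathrm{Ls}\{f_n\}\,d\mu$.

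It remains to strip off the convex hull, and this is the crux. Because $\mu$ is nonatomic, the Lyapunov convexity theorem delivers the equality $\int\mathrm{co}\,\mathrm{Ls}\{f_n\}\,d\mu=\int\mathrm{Ls}\{f_n\}\,d\mu$ of Aumann integrals: although the weak limit $f$ need not itself select from $\mathrm{Ls}\{f_n(\cdot)\}$, its integral is reproduced by an integrable selection of $\mathrm{Ls}\{f_n(\cdot)\}$. This yields $y\in\int\mathrm{Ls}\{f_n\}\,d\mu$ and proves (i). No measurability pathology arises: the selection $f$ certifies $\mathrm{co}\,\mathrm{Ls}\{f_n(t)\}\neq\emptyset$, hence $\mathrm{Ls}\{f_n(t)\}\neq\emptyset$, for a.e.\ $t$.

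Part (ii) then follows at once. Boundedness in $L^1(\mu,\R^k)$ makes $\{\int f_n\,d\mu\}_{n\in\N}$ a bounded sequence in $\R^k$, which therefore has a cluster point $y\in\mathrm{Ls}\{\int f_n\,d\mu\}$. By (i), $y\in\int\mathrm{Ls}\{f_n\}\,d\mu$, so $y=\int f\,d\mu$ for some integrable $f$ with $f(t)\in\mathrm{Ls}\{f_n(t)\}$ a.e.; this single $f$ satisfies (a) and (b) simultaneously. I expect the entire difficulty to be concentrated in the de-convexification step, where nonatomicity and Lyapunov's theorem do the real work; the remainder is the soft machinery of Dunford--Pettis weak compactness together with Lemma \ref{lem1}.
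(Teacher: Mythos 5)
Your proof is correct and follows essentially the same route as the paper's: Dunford--Pettis weak compactness, Lemma \ref{lem1}, and the Lyapunov-based identity $\int\mathrm{co}\,\mathrm{Ls}\{f_n\}\,d\mu=\int\mathrm{Ls}\{f_n\}\,d\mu$ to strip off the convex hull. The only difference is organizational: the paper establishes (ii) first and deduces (i) by applying it to subsequences, whereas you prove (i) directly for each cluster point and then obtain (ii) from the nonemptiness of $\mathrm{Ls}\left\{\int f_n\,d\mu\right\}$ guaranteed by boundedness.
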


\begin{proof}
Since the uniformly integrable, bounded sequence $\{ f_n \}_{n\in \N}$ is relatively weakly compact in $L^1(\mu,\R^k)$ by the Dunford--Pettis criterion (see \cite[Corollary IV.8.11]{ds58}), one can extract from it a subsequence $\{ f_{n_i} \}_{i\in \N}$ that converges weakly to $f_0$ in $L^1(\mu,\R^k)$. Since the integration operator $g\mapsto \int gd\mu$ from $L^1(\mu,\R^k)$ to $\R^k$ is continuous in the weak topology for $L^1(\mu,\R^k)$, we obtain $\int f_id\mu\to \int f_0d\mu$, and hence, $\int f_0d\mu\in \mathrm{Ls}\,\{ \int f_nd\mu \}$. It follows from Lemma \ref{lem1} that $f_0(t)\in \overline{\mathrm{co}}\,\mathrm{Ls}\,\{ f_n(t) \}$ a.e.\ $t\in T$. Integrating the both sides of this inclusion yields $\int f_0d\mu\in \int\overline{\mathrm{co}}\,\mathrm{Ls}\,\{ f_n \}d\mu=\int\mathrm{Ls}\,\{ f_n \}d\mu$, where the equality follows from the classical Lyapunov theorem (see \cite[Theorem 8.6.3]{af90} or \cite[Theorem 3 and Proposition 7, D.II.4]{hi74}). Hence, there exists $f\in L^1(\mu,\R^k)$ such that $f(t)\in \mathrm{Ls}\,\{ f_n(t) \}$ a.e.\ $t\in T$ and $\int fd\mu=\int f_0d\mu\in \mathrm{Ls}\,\{ \int f_nd\mu \}$, which verifies condition (ii). 

Condition (i) follows easily from condition (ii). To see this claim, take any $x\in \mathrm{Ls}\,\{ \int f_nd\mu \}$. Then there exists a subsequence $\{ f_{n_i} \}_{i\in \N}$ such that $\lim_i\int f_{n_i}d\mu=x$. It follows from condition (ii) that there exists $f\in L^1(\mu,\R^k)$ such that (a) $f(t)\in \mathrm{Ls}\,\{ f_{n_i}(t) \}$ a.e.\ $t\in T$; (b) $\int fd\mu=\lim_i\int f_{n_i}d\mu$. Integrating the both side of condition (a) together with condition (b) yields $x\in \int\mathrm{Ls}\,\{ f_{n_i} \}d\mu\subset \int\mathrm{Ls}\,\{ f_n \}d\mu$.
\end{proof}

For  later reference, given a sequence $\{ f_n \}_{n\in \N}$ in $L^1(\mu,\R^k)$ such that $\mathrm{Ls}\left\{ \int f_nd\mu \right\}$ is nonempty, condition (i) of Theorem \ref{thm1} is referred to the \textit{Fatou property} and condition (ii) of Theorem \ref{thm1} is referred to the \textit{upper closure property}.\footnote{We borrow this terminology from \citet{ce83}, who called the Fatou-like lemma the ``lower closure theorem'', which is most useful in proving the existence of solutions in optimal control problems. For the relevance of \citet{sc70} to the lower closure theorem, see \cite{cs78}.} Our attempt in this essay  is to exemplify how these notions characterize the structure of nonatomic finite measure spaces. For a further extension and another variant of the Fatou and the upper closure properties in the finite-dimensional case, see \cite{ba84,cs78,ol87}. 

To show how Fatou's lemma for multifunctions easily follows from that for functions, we provide here a proof of Theorem \ref{au1} exploiting Theorem \ref{thm1} for completeness. 

\begin{proof}[Proof of Theorem \ref{au1}]
If $\mathrm{Ls}\,\{ \int\Gamma_n d\mu \}=\emptyset$, then the result is trivially true. Thus, without loss of generality, we may assume that $\mathrm{Ls}\,\{ \int\Gamma_n d\mu \}\ne\emptyset$. Take any $x\in \mathrm{Ls}\,\{ \int \Gamma_nd\mu \}$. Then there is a sequence $\{ x_n \}_{n\in \N}$ in $\R^k$ with $x_n\in \int \Gamma_nd\mu$ one can extract a subsequence $\{ x_{n_i} \}_{i\in \N}$ such that $x_{n_i}\to x$. Hence, there is an integrably bounded sequence $\{ f_{n_i} \}_{i\in \N}$ in $L^1(\mu,\R^k)$ such that $f_{n_i}$ is a measurable selector of $\Gamma_{n_i}$ for each $i$ and $x_{n_i}=\int f_{n_i}d\mu$. It follows from Theorem \ref{thm1} that
$$
x=\lim_{i\to \infty}x_{n_i}\in \mathrm{Ls}\left\{ \int f_nd\mu \right\}\subset \int \mathrm{Ls}\left\{ f_n \right\}d\mu\subset \int \mathrm{Ls}\left\{ \Gamma_n \right\}d\mu.
$$
Therefore, the desired inclusion holds.
\end{proof}

\subsection{Large Economies with Finite-Dimensional Commodity Spaces}
\label{LE1}
Under the standard setting of large economies with finite-dimensional commodity spaces, the consumption set of each agent is assumed to be an unbounded subset of $\R^k_+$ and the preferences of each agent are supposed to be monotonic or locally insatiated (see \cite{au66,hi70a,hi74,ky81,sc69}). These assumptions cause a serious difficulty in the integrability (summability) of the demand multifunction and the detection of Walrasian equilibria in large economies, which is a peculiar problem stemming from the fact that the agent space is a continuum. To overcome this difficulty, two approaches to the direct application of Fatou's lemma were proposed. 
 
\begin{enumerate}[(1)]
\item The truncation of the consumption set (see \cite{hi70a,sc69}): In this approach, the consumption set of each agent is truncated into a subset that is bounded from above. The individual demand is restricted to the truncated consumption set and the upper bound of the consumption set ensures the integrable boundedness of the truncated demand multifunction. 
\item The truncation of the price simplex (see \cite{ky81}): In this approach, the price simplex is truncated into a subset that excludes a small neighborhood of the origin. Since the available prices are away from the zero price, the demand multifunction restricted to the set of truncated prices is integrably bounded.   
\end{enumerate}
In both approaches, the underlying truncation furnishes a sequence of truncated subeconomies with the original economy, in which truncated equilibria exists. In the first approach, Fatou's lemma arises in the limit argument when the upper bound of the consumption set tends to infinity and in the second approach, so does it when the neighborhood of the origin shrinks gradually to the singleton. Following the first approach, we illustrate in the sequel  the power of Fatou's lemma in Walrasian general equilibrium analysis.  

The set of agents is given by a complete finite measure space $(T,\Sigma,\mu)$. There exists $k\in \N$ distinct commodities available and the commodity space is given by $\R^k$. The preference relation ${\succsim}(t)$ of each agent $t\in T$ is a complete, transitive binary relation on a common consumption set $\R^k_+$, which induces the preference mapping $t\mapsto {\succsim}(t)\subset \R^k_+\times \R^k_+$. We denote by $x\,{\succsim}(t)\,y$ the relation $(x,y)\in {\succsim}(t)$. The indifference and strict relations are defined respectively by $x\,{\sim}(t)\,y$ $\Leftrightarrow$ $x\,{\succsim}(t)\,y$ and $y\,{\succsim}(t)\,x$, and by $x\,{\succ}(t)\,y$ $\Leftrightarrow$ $x\,{\succsim}(t)\,y$ and $x\,{\not\sim}(t)\,y$. Each agent possesses an initial endowment $\omega(t)\in \R^k_+$, which is the value of an integrable function $\omega:T\to \R^k$. The economy $\E$ consists of the primitives $\E=\{ (T,\Sigma,\mu),\R^k_+,\succsim,\omega \}$. 

The price space is $\R^k$. Given a price $p\in \R^k\setminus \{ 0 \}$, the budget set of each agent is $B(t,p)=\{ x\in \R^k_+\mid \langle p,x \rangle \le \langle p,\omega(t) \rangle \}$. A function $f\in L^1(\mu,\R^k)$ is called an \textit{assignment} if $f(t)\in \R^k_+$ a.e.\ $t\in T$. An assignment $f$ is an \textit{allocation} for $\E$ if $\int fd\mu=\int \omega d\mu$. 

\begin{dfn}
A price-allocation pair $(p,f)$ is a \textit{Walrasian equilibrium} for $\E$ if a.e.\ $t\in T$: $f(t)\in B(t,p)$ and $x\not\in B(t,p)$ whenever $x\,{\succ}(t)\,f(t)$.
\end{dfn}

The standing assumption on $\E$ is described as follows. 

\begin{assmp}
\label{assmp1}
\begin{enumerate}[(i)]
\item ${\succsim}(t)$ is a closed subset of $\R^k_+\times \R^k_+$ for every $t\in T$. 
\item For every assignment $f$ and $g$: the set $\{ t\in T\mid f(t)\,{\succsim}(t)\,g(t) \}$ belongs to $\Sigma$. 
\item For every $x,y\in \R^k_+$: $x>y$ implies $x\,{\succ}(t)\,y$. 
\item $\displaystyle\int\omega d\mu\in \R^k_{++}$. 
\end{enumerate}
\end{assmp}

Condition (i) is referred to as  \textit{continuity} of the preference relation ${\succsim}(t)$. This is equivalent to the requirement that for every $x\in \R^k_+$ both the upper contour set $\{ y\in \R^k_+\mid y\,{\succsim}(t)\,x \}$ and the lower contour set $\{ y\in \R^k_+\mid x\,{\succsim}(t)\,y \}$ are closed in $\R^k_+$. The measurability of the preference mapping in condition (ii) is introduced in \cite{au66}, and condition (iii) is referred to as  \textit{monotonicity} of ${\succsim}(t)$. This means that every commodity is desirable to each agent. Condition (iv) ``asserts that no commodity is totally absent from the market \cite[p.\,3]{au66}''. It follows from the continuity and the monotonicity of ${\succsim}(t)$ that:
\begin{description}
\item[($\star$)] For every $x\in \R^k_+$: $x$ belongs to the closure of the upper contour set $\{ y\in \R^k_+\mid y\,{\succ}(t)\,x \}$. 
\end{description}
Condition ($\star$) is a variant of ``local nonsatiation'' that originated in \cite{hi68}: it  plays a crucial role in proving the existence of Walrasian equilibria with free disposal without the monotonicity of preferences in Section \ref{sec}.  

\begin{thm}[\citet{au66}]
\label{au2}
Let $(T,\Sigma,\mu)$ be a nonatomic finite measure space. Then for every economy $\E$ satisfying Assumption \ref{assmp1}, there exists a Walrasian equilibrium $(p,f)$ with $p\in \R^k_{++}$.
\end{thm}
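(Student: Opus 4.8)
The plan is to prove existence of a Walrasian equilibrium with a strictly positive price vector by following the two-step architecture described in the introduction: first construct equilibria of truncated subeconomies (the ``tremble''), then pass to the limit using Fatou's lemma (Theorem~\ref{thm1}) to recover an equilibrium of the original, untruncated economy. The key difficulty, as foreshadowed in the discussion of Subsection~\ref{LE1}, is that the common consumption set $\R^k_+$ is unbounded, so the individual demand correspondence need not be integrably bounded and the aggregate demand may fail to exist. I would adopt the first of the two remedies described above, namely truncation of the consumption set.

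First I would set up the truncated economies. For each $n\in\N$, replace the consumption set $\R^k_+$ by the compact, convex box $X_n=\{x\in\R^k_+\mid x^i\le n,\ i=1,\dots,k\}$, and restrict each agent's preference to $X_n$ and each budget set to $B(t,p)\cap X_n$. Because the truncated consumption sets are now compact and uniformly bounded by $n$, the individual demand correspondence $t\mapsto D_n(t,p)$ is integrably bounded, and standard existence results for large economies with compact consumption sets (the Arrow--Debreu-style first step, via a fixed-point argument on the price simplex applied to the nonatomic aggregate) yield a truncated Walrasian equilibrium $(p_n,f_n)$ with $p_n$ in the simplex $\Delta=\{p\in\R^k_+\mid \sum_i p^i=1\}$, $f_n(t)\in B(t,p_n)\cap X_n$ a.e., $f_n(t)$ preference-maximal in $B(t,p_n)\cap X_n$ a.e., and market clearing $\int f_nd\mu=\int\omega d\mu$. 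The nonatomicity of $\mu$ is what lets the aggregate excess-demand correspondence be convex-valued via the Lyapunov convexity theorem, so that the fixed-point step succeeds without convexifying individual preferences.

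Next I would extract a limit. Since $\int f_nd\mu=\int\omega d\mu$ is constant in $n$ and each $f_n\ge 0$, the sequence $\{f_n\}$ is bounded in $L^1(\mu,\R^k)$; I would verify uniform integrability (which follows from the constant aggregate together with nonnegativity, essentially because the integrals over small sets are controlled by $\int_A\omega d\mu$), so that Theorem~\ref{thm1} applies. By part~(ii) of Theorem~\ref{thm1} there is an assignment $f\in L^1(\mu,\R^k)$ with $f(t)\in\mathrm{Ls}\{f_n(t)\}$ a.e.\ and $\int fd\mu\in\mathrm{Ls}\{\int f_nd\mu\}=\{\int\omega d\mu\}$, so $f$ is an allocation. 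Passing to a further subsequence, the prices $p_n$ converge in the compact simplex $\Delta$ to some $p$.

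The main obstacle, and the payoff of the argument, is to show that $(p,f)$ is a genuine Walrasian equilibrium for the \emph{untruncated} economy and that $p\in\R^k_{++}$. For budget feasibility, I would take a.e.\ $t$ a subsequence with $f_{n_i}(t)\to f(t)$ and pass to the limit in $\langle p_{n_i},f_{n_i}(t)\rangle\le\langle p_{n_i},\omega(t)\rangle$ to get $f(t)\in B(t,p)$. For optimality I must rule out the possibility that the truncation binds in the limit, i.e.\ that some agent would strictly prefer a consumption outside the eventual box; here condition~($\star$) and the monotonicity assumption~(iii) are essential, together with the continuity of preferences~(i), to show that any $x\,{\succ}(t)\,f(t)$ must violate the budget constraint at price $p$. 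Finally, strict positivity $p\in\R^k_{++}$ follows from monotonicity combined with $\int\omega d\mu\in\R^k_{++}$ (Assumption~\ref{assmp1}(iv)): if some coordinate price were zero, monotonicity would make that commodity infinitely attractive and demand would be unbounded there, contradicting market clearing with finite aggregate endowment. Checking that the truncation does not bind in the limit is the delicate step, and it is precisely where the finiteness of aggregate resources and the desirability of commodities are brought to bear to establish the ``consequent irrelevancy'' of the compactification.
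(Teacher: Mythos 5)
Your plan reproduces the architecture of the paper's proof --- truncate, solve the truncated subeconomies via the Auxiliary Theorem, extract a limit price in the compact simplex and a limit allocation via an exact Fatou lemma, then verify the equilibrium conditions --- so the approach is the right one. The genuine gap is in the central verification that the limit bundle $f(t)$ is preference-maximal in the \emph{untruncated} budget set $B(t,p)$: you name this as ``the delicate step'' but supply no mechanism, and the missing idea is precisely the measurable-selection and countable-union argument of the paper's Step 3. If optimality failed on a set $A$ of positive measure, each $t\in A$ would admit some $y\,{\succ}(t)\,f(t)$ with $\langle p,y \rangle\le\langle p,\omega(t) \rangle$ (upgraded to strict inequality by scaling by $\varepsilon\in(0,1)$ and continuity, after disposing of the case $\langle p,\omega(t) \rangle=0$ via monotonicity and $p\gg 0$). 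Since the preferred-and-cheaper bundle varies with $t$, contradicting the equilibrium property of a single truncated economy requires (a) a measurable selector $h$ of the graph-measurable correspondence $\Gamma(t)=\{ x\mid x\,{\succ}(t)\,f(t),\ \langle p,x \rangle<\langle p,\omega(t) \rangle \}$, and (b) the observation that if for every $n$ the set $\{ t\in A\mid h(t)\,{\succ}(t)\,f_n(t),\ \langle p_n,h(t) \rangle<\langle p_n,\omega(t) \rangle \}$ were null, then passing to the limit along the subsequence with $f_n(t)\to f(t)$, using closedness of ${\succsim}(t)$ and $p_n\to p$, would contradict $h$ being a selector of $\Gamma$; hence some single $n$ yields a positive-measure set of agents for whom $h(t)$ lies in the $n$-bounded budget set and is strictly preferred to $f_n(t)$, contradicting that $(p_n,f_n)$ is an $n$-bounded equilibrium. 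Without (a) and (b) the argument does not close, because the failure of optimality must be localized at some finite $n$ uniformly over a positive-measure set of agents.

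A secondary point: your justification of uniform integrability is incorrect. Market clearing gives $\int f_nd\mu=\int\omega d\mu$ over the whole space only, and nonnegativity does not yield $\int_A f_nd\mu\le\int_A\omega d\mu$ on small sets (the sequence $f_n=n\chi_{[0,1/n]}$ on $[0,1]$ has constant integral and is nonnegative but is not uniformly integrable). The paper asserts uniform integrability at the same point without proof; a safe repair is to invoke Theorem \ref{sc} instead, which needs only that $\lim_n\int f_nd\mu$ exist and gives $\int fd\mu\le\int\omega d\mu$, after which the budget-exhaustion argument of the paper's Step 4 (condition ($\star$) plus integration of $\langle p,f(t) \rangle\ge\langle p,\omega(t) \rangle$) restores equality. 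Your direct limit passage in the budget inequality for feasibility is valid and slightly simpler than the paper's Step 4, and your treatment of $p\in\R^k_{++}$ is at the same level of detail as the paper's, which likewise defers to Schmeidler's Main Lemma.
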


Intuitively, the idea of \citeauthor{au66}'s proof is divided into two steps: first, the unbounded consumption set of each agent is truncated into a bounded set and equilibria are detected in the truncated subeconomy. Towards  this end, the Lyapunov convexity theorem guarantees that the ``aggregate preferred multifunction'' has compact, convex values (see \cite{au65}) and Fatou's lemma for multifunctions (Theorem \ref{au1}) guarantees that it is upper semicontinuous in prices (see \cite{au76}). This crucial observation makes it possible for him  to apply Brouwer's fixed point theorem in the truncated subeconomy. Second, for each sequence of the upper bounds of the consumption set, there exists a sequence of equilibria in the truncated subeconomies. Letting the upper bounds tend to infinity yields that the limit of the sequence of equilibria corresponds to a Walrasian equilibrium in the original economy, though the formal argument is somewhat technically intricate.\fn{This, and the succeeding paragraph, formalize  considerations already  informally discussed in the introduction, but the reader should note that in this essay  \lq\lq trembles"  are also relevant to an approximate Fatou's lemma with $\varepsilon$-perturbations in Subsections \ref{app1} and 5.1.} 

It is \citet{sc69} who facilitated the limiting argument in the second step using Fatou's lemma for functions (Theorem \ref{sc}). We outline here the proof of Theorem \ref{au2} based on Theorem \ref{ar1} instead of Theorem \ref{sc} for the sake of simplicity.\footnote{The proof is, however, rather different from \cite{sc69}, especially in Steps 3 and 4 below. See \cite{ya01} for another heuristic illustration based on the argument in \cite{hm71} employing Theorem \ref{sc}.}\ To this end, let $e=(1,\dots,1)\in \R^k$ and define the $n$-bounded partial budget set by 
$$
B^n(t,p)=B(t,p)\cap\left\{ x\in \R^k_+\mid n\left( 1+\sum_{i=1}^k\omega^i(t)e \right) \right\},
$$
where $\omega^i(t)$ is the $i$th component of $\omega(t)\in \R^k_+$. A price-allocation pair $(p,f)$ is called an \textit{$n$-bounded partial Walrasian equilibrium} for $\E$ if a.e.\ $t\in T$: $f(t)\in B^n(t,p)$ and $x\not\in B^n(t,p)$ whenever $x\,{\succ}(t)\,f(t)$.

\begin{aux}[\citet{sc69}]
Let $(T,\Sigma,\mu)$ be a nonatomic finite measure space. If the economy $\E$ satisfies Assumption \ref{assmp1}, then for each $n\in \N$ there exists an $n$-bounded partial Walrasian equilibrium $(p,f)$ with $p\in \R^k_+\setminus \{ 0 \}$.
\end{aux}

\begin{proof}[Proof of Theorem \ref{au2}]
\underline{Step 1}: By Auxiarily Theorem, there exists an $n$-bounded partial Walrasian equilibrium $(q_n,f_n)\in (\R^k_+\setminus \{ 0 \})\times L^1(\mu,\R^k)$ for $\E$. We can normalize the $n$-bounded partial equilibrium price for $\E$ such that $p_n=q_n/\| q_n \|\in \Delta:=\{ p\in \R^k_+\mid \| p \|=1 \}$. Since the price simplex $\Delta$ is compact, we can extract a subsequence from $\{ p_n \}_{n\in \N}$ (which we do not relabel) that converges to $p\in \Delta$. Via a somewhat intricate argument using the monotonicity of ${\succsim}(t)$ in Assumption \ref{assmp1}(iii), we can also show that $p\in \R^k_{++}$ (see \cite[Main Lemma]{sc69}). This is a crucial point to guarantee the compactness of the budget set $B(t,p)$ (see also \cite[Lemma 6.1]{au66}). 

\underline{Step 2}: Since $\int f_nd\mu=\int\omega d\mu$ for each $n\in \N$, it is obvious that $\{ f_n \}_{n\in \N}$ is a uniformly integrable sequence with $\lim_n\int f_nd\mu=\int\omega d\mu$. Invoking Theorem \ref{ar1} yields that there exist an assignment $f\in L^1(\mu,\R^k)$ such that $f(t)\in \mathrm{Ls}\,\{ f_n(t) \}$ a.e.\ $t\in T$ and $\int fd\mu=\lim_{n}\int f_nd\mu=\int\omega_n d\mu$. Therefore, $f$ is an allocation for $\E$. 

\underline{Step 3}: 
We claim that: 
\begin{quote}
For a.e.\ $t\in T$: $x\,{\succ}(t)\,f(t)$ implies that $\langle p,x \rangle>\langle p,\omega(t) \rangle$.
\end{quote}
Suppose that $\langle p,\omega(t) \rangle=0$. If $\langle p,x \rangle=0$, then $x=0$ in view of $p\in \R^k_{++}$, but $x\,{\succ}(t)\,f(t)$ is impossible by the monotonicity of ${\succsim}(t)$ in Assumption \ref{assmp1}(iii). Hence, the claim is automatic whenever $\langle p,\omega(t) \rangle=0$. Suppose, by way of contradiction, that there exists $A\in \Sigma$ of positive measure with the following property. For every $t\in A$: $\langle p,\omega(t) \rangle>0$ and there exists $y\in \R^k$ such that $y\,{\succ}(t)\,f(t)$ and $\langle p,y \rangle\le \langle p,\omega(t) \rangle$. It follows from the continuity of ${\succsim}(t)$ that $\varepsilon y\,{\succ}(t)\,f(t)$ and $\langle p,\varepsilon y \rangle<\langle p,\omega(t) \rangle$ for some $\varepsilon\in (0,1)$. Hence, we may assume without loss of generality that for every $t\in A$ there exists $y\in \R^k$ such that $y\,{\succ}(t)\,f(t)$ and $\langle p,y \rangle<\langle p,\omega(t) \rangle$. Define the multifunction $\Gamma:A\twoheadrightarrow \R^k$ by 
$$
\Gamma(t)=\left\{ x\in \R^k\mid x\,{\succ}(t)\,f(t),\ \langle p,x \rangle<\langle p,\omega(t) \rangle \right\}.
$$
Then $\Gamma$ is a graph measurable multifunction with $y\in \Gamma(t)$.\footnote{We shall show the graph measurability of $\Gamma$ as a consequence of a  more general result pertaining to  the setting of a  separable Banach space in the course of the proof of Theorem \ref{WE1}.} Let $h:A\to \R^k$ be a measurable selector from $\Gamma$. Suppose that the set defined by
$$
\bigcup_{n\in \N}\{ t\in A\mid h(t)\,{\succ}(t)\,f_n(t), \,\langle p_n,h(t)\rangle<\langle p_n,\omega(t) \rangle \}
$$
is of measure zero. Then for each $n\in \N$: $f_n(t)\,{\succsim}(t)\,h(t)$ or $\langle p_n,h(t)\rangle\ge \langle p_n,\omega(t) \rangle$ a.e.\ $t\in A$. Since $p_n\to p$, passing to the limit along a suitable subsequence of $\{ f_n(t) \}_{n\in \N}$ in $\R^k$ yields $f(t)\,{\succsim}(t)\,h(t)$ or $\langle p,h(t)\rangle\ge \langle p,\omega(t) \rangle$ a.e.\ $t\in A$, a contradiction to the fact that $h$ is a measurable selector from $\Gamma$. Therefore, there exists $n\in  \N$ such that $\{ t\in A\mid h(t)\,{\succ}_n(t)\,f_n(t), \,\langle p_n,h(t)\rangle<\langle p_n,\omega(t) \rangle \}$ is of positive measure, and hence, $h(t)\in B^n(t,p_n)$ for every $t$ in this set of positive measure. This is, however, impossible because $(p_n,f_n)$ is an $n$-bounded partial Walrasian equilibrium for $\E$. Therefore, the claim is true. 

\underline{Step 4}: It remains to show that $\langle p,f(t) \rangle\le \langle p,\omega(t) \rangle$ a.e.\ $t\in T$. Since $f(t)$ belongs to the closure of the upper contour set $\{  x\in \R^k_+\mid x\,{\succ}(t)\,f(t) \}$ by condition ($\star$), the claim shown in Step 3 implies that $\langle p,f(t) \rangle\ge \langle p,\omega(t) \rangle$. Integrating  both sides of this inequality yields $\int \langle p,f(t) \rangle d\mu\ge \int \langle p,\omega(t) \rangle d\mu$. On the other hand, as demonstrated in Step 2, $\int fd\mu=\int \omega d\mu$, and hence, $\int  \langle p,f(t) \rangle d\mu=\int \langle p,\omega(t) \rangle d\mu$. Therefore, we must have the equality $\langle p,f(t) \rangle=\langle p,\omega(t) \rangle$ a.e.\ $t\in T$. Therefore, $(p,f)\in \R^k_{++}\times L^1(\mu,\R^k)$ is a Walrasian equilibrium for $\E$. 
\end{proof}

\section{Vector Integration in Banach Spaces}
In this section we first provide an overview of the two standard formulations of integration in an infinite-dimensional space: Bochner and Gelfand integrals of functions, measurable selectors of multifunctions with values in a Banach space or its dual, the corresponding integrals of multifunctions, and some additional terminologies from vector integration theory. Then we introduce the notion of saturated measure space and provide its complete characterization in terms of the Lyapunov convexity theorem 
and the compactness and convexity of the corresponding  integrals of multifunctions.

\subsection{Bochner Integrals of Multifunctions}
\label{bo}
Let $(T,\Sigma,\mu)$ be a (complete) finite measure space. Let $(E,\|\cdot\|)$ be a Banach space with its dual $E^*$ furnished with the dual system $\langle \cdot,\cdot \rangle$ on $E\times E^*$. A function $f:T\to E$ is \textit{strongly measurable} if there exists a sequence of simple (or finitely valued) functions $f_n:T\to E$ such that $\|f(t)-f_n(t)\|\to 0$ a.e.\ $t\in T$; $f$ is \textit{Bochner integrable} if it is strongly measurable and $\int \|f(t)\|d\mu<\infty$, where the \textit{Bochner integral} of $f$ over $A\in \Sigma$ is defined by $\int_A fd\mu=\lim_n\int_A f_nd\mu$. Denote by $L^1(\mu,E)$ the space of ($\mu$-equivalence classes of) $E$-\hspace{0pt}valued Bochner integrable functions on $T$, normed by $\| f \|_1=\int \| f(t) \|d\mu$, $f\in L^1(\mu,E)$. By the Pettis measurability  theorem (see \cite[Theorem II.1.2]{du77}), $f$ is strongly measurable if and only if it is Borel measurable with respect to the norm topology of $E$ whenever $E$ is separable.  

A function $g:T\to E^*$ is \textit{weakly$^*\!$ scalarly measurable} if for every $x\in E$ the scalar function $\langle x,g(\cdot) \rangle:T\to \R$ defined by $t\mapsto \langle x,g(t) \rangle$ is measurable. Denote by $L^\infty(\mu,E^*_{\textit{w}^*})$ the space of ($\mu$-equivalence classes of) weakly$^*\!$ measurable, essentially bounded, $E^*$-valued functions on $T$, normed by $\| g \|_\infty=\mathrm{ess\,sup}_{t\in T}\| g(t) \|<\infty$. Then the dual space of $L^1(\mu,E)$ is given by $L^\infty(\mu,E^*_{\textit{w}^*})$ whenever $E$ is separable (see \cite[Theorem 2.112]{fl07} or \cite[Corollary to Theorem VII.4.7]{it69}), and the dual system is given by $\langle f,g \rangle=\int\langle f(t),g(t) \rangle d\mu$ with $f\in L^1(\mu,E)$ and $g\in L^\infty(\mu,E^*_{\textit{w}^*})$. Denote by $\mathrm{Borel}(E^*,\mathit{w}^*)$ the Borel $\sigma$-\hspace{0pt}algebra of $E^*$ generated by the weak$^*\!$ topology. If $E$ is a separable Banach space, then $E^*$ is a locally convex Suslin space under the weak$^*\!$ topology (see \cite[p.\,67]{th75}). Hence, under the separability of $E$, a function $g:T\to E^*$ is weakly$^*\!$ scalarly measurable if and only if it is Borel measurable with respect to $\mathrm{Borel}(E^*,\mathit{w}^*)$ (see \cite[Theorem 1]{th75}). Weakly$^*\!$ scalarly measurable functions $g_1,g_2:T\to E^*$ are \textit{weakly$^*\!$ scalarly equivalent} if $\langle x,g_1(t) \rangle=\langle x,g_2(t) \rangle$ for every $x\in E$ a.e.\ $t\in T$ (the exceptional $\mu$-\hspace{0pt}null set depending on $x$). 

A set-\hspace{0pt}valued mapping from $T$ to the family of nonempty subsets of $E$ is called a \textit{multifunction}. A multifunction $\Gamma:T\twoheadrightarrow E$ is \textit{measurable} if the set $\{t\in T\mid \Gamma(t)\cap U\ne \emptyset \}$ is in $\Sigma$ for every open subset $U$ of $E$; it is \textit{graph measurable} if the set $\mathrm{gph}\,\Gamma:=\{ (t,x)\in T\times E\mid x\in \Gamma(t) \}$ belongs to $\Sigma\otimes \mathrm{Borel}(E,\| \cdot \|)$, where $\mathrm{Borel}(E,\| \cdot \|)$ is the Borel $\sigma$-\hspace{0pt}algebra of $(E,\| \cdot \|)$ generated by the norm topology. If $E$ is separable, then $\mathrm{Borel}(E,\| \cdot \|)$ coincides with the Borel $\sigma$-\hspace{0pt}algebra $\mathrm{Borel}(E,\mathit{w})$ of $E$ generated by the weak topology (see \cite[Corollary 2, Part I, Chap.\,II]{sc73} or \cite[p.\,21]{he90}). It is well-\hspace{0pt}known that for closed-\hspace{0pt}valued multifunctions, measurability and graph measurability coincide whenever $E$ is separable (see \cite[Theorem 8.1.4]{af90} or \cite[Theorem III.30]{cv77}). A function $f:T\to E$ is a \textit{selector} of $\Gamma$ if $f(t)\in \Gamma(t)$ a.e.\ $t\in T$. If $E$ is separable, then by the Aumann measurable selection theorem, a multifunction $\Gamma$ with measurable graph admits a measurable selector (see \cite[Theorem III.22]{cv77} or \cite[Theorem 1, D.II.2]{hi74}) and it is also strongly measurable. 

Let $B$ be the open unit ball in $E$. A multifunction $\Gamma:T\twoheadrightarrow E$ is \textit{integrably bounded} if there exists $\varphi\in L^1(\mu)$ such that $\Gamma(t)\subset \varphi(t)B$ a.e.\ $t\in T$. If $\Gamma$ is graph measurable and integrably bounded, then it admits a Bochner integrable selector whenever $E$ is separable. Denote by $\mathcal{S}^1_\Gamma$ the set of Bochner integrable selectors of $\Gamma$. If $\Gamma$ is an integrably bounded, measurable multifunction with weakly compact, convex values, then $\mathcal{S}^1_\Gamma$ is weakly compact in $L^1(\mu,E)$ whenever $E$ is separable (see \cite[Theorem 3.1]{ya91}). The Bochner integral of $\Gamma$ is conventionally defined as $\int\Gamma d\mu:=\{ \int fd\mu \mid f\in \mathcal{S}^1_\Gamma \}$. Denote by $\overline{\mathrm{co}}\,\Gamma$ the multifunction defined by the closure of the convex hull of $\Gamma(t)$. 

A subset $K$ of $L^1(\mu,E)$ is said to be \textit{uniformly integrable} if 
$$
\lim_{\mu(A)\to 0}\sup_{f\in K}\int_A\| f \|d\mu=0. 
$$
$K$ is said to be \textit{well-\hspace{0pt}dominated} if there is an integrably bounded, weakly compact-\hspace{0pt}valued multifunction $\Gamma:T\twoheadrightarrow E$ such that $f(t)\in \Gamma(t)$ a.e.\ $t\in T$ for every  $f\in K$. Here, $\Gamma$ is referred to as a \textit{dominating multifunction} for $K$. Denote by $\M^1(\mu,E)$ the set of integrably bounded multifunctions from $T$ to $E$. A subset $\K$ of $\M^1(\mu,E)$ is said to be \textit{well-dominated} if there exists an integrably bounded, weakly compact-valued multifunction $\tilde{\Gamma}:T\twoheadrightarrow E$ such that $\Gamma(t)\subset \tilde{\Gamma}(t)$ for every $\Gamma\in \M^1(\mu,E)$ and $t\in T$. 

\begin{thm}[\citet{di77,drs93}]
\label{diest}
Let $(T,\Sigma,\mu)$ be a finite measure space and $E$ be a Banach space. Then a well-dominated subset of $L^1(\mu,E)$ is relatively weakly compact. 
\end{thm}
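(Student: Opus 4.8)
The plan is to reduce the general (possibly non-separable, non-convex-valued) situation to the separable, convex-valued case already settled by Yannelis's theorem (\cite[Theorem 3.1]{ya91}), and then to pass to limits through the Eberlein--\v{S}mulian theorem. Write $K$ for the well-dominated set and let $\Gamma$ be a dominating multifunction, so that $\Gamma(t)\subset \varphi(t)B$ a.e.\ for some $\varphi\in L^1(\mu)$ and each $\Gamma(t)$ is weakly compact. First I would record the two elementary consequences of domination. The set $K$ is norm-bounded, since $\|f\|_1=\int\|f\|d\mu\le \int\varphi d\mu$ for every $f\in K$, and it is uniformly integrable, since $\sup_{f\in K}\int_A\|f\|d\mu\le \int_A\varphi d\mu\to 0$ as $\mu(A)\to 0$. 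By the Eberlein--\v{S}mulian theorem it then suffices to prove that every sequence in $K$ admits a weakly convergent subsequence.

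So I would fix a sequence $\{f_n\}_{n\in\N}$ in $K$. Each $f_n$ is strongly measurable, hence essentially separably valued, so the closed linear span $E_0$ of the union of the essential ranges of the $f_n$ is a separable closed subspace of $E$ with $f_n(t)\in E_0$ a.e.\ for every $n$. I would then build a dominating multifunction directly from the sequence, thereby avoiding any measurability hypothesis on $\Gamma$ itself: set
$$
\Gamma_0(t)=\overline{\mathrm{co}}\,\{ f_n(t) : n\in\N \}.
$$
Because $\{f_n(t):n\in\N\}\subset\Gamma(t)$ with $\Gamma(t)$ weakly compact, the Krein--\v{S}mulian theorem gives that $\overline{\mathrm{co}}\,\Gamma(t)$ is weakly compact; and $\Gamma_0(t)$, being a norm-closed convex (hence weakly closed) subset of this weakly compact set, is itself weakly compact and convex. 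It is integrably bounded, as $\Gamma_0(t)\subset\varphi(t)\overline{B}$, it takes values in $E_0$, and it is measurable, since $t\mapsto\overline{\{f_n(t)\}}$ is a measurable closed-valued multifunction (the $f_n$ furnish a Castaing representation) and passing to the closed convex hull preserves measurability. By construction $f_n\in\mathcal{S}^1_{\Gamma_0}$ for every $n$.

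Now $\Gamma_0$ is an integrably bounded, measurable multifunction with weakly compact convex values in the \emph{separable} space $E_0$, so Yannelis's theorem (\cite[Theorem 3.1]{ya91}) yields that $\mathcal{S}^1_{\Gamma_0}$ is weakly compact in $L^1(\mu,E_0)$. Applying Eberlein--\v{S}mulian in $L^1(\mu,E_0)$, the sequence $\{f_n\}_{n\in\N}$ has a subsequence converging weakly in $L^1(\mu,E_0)$ to some $f$. Since $E_0$ is a closed subspace of $E$, the canonical inclusion $L^1(\mu,E_0)\hookrightarrow L^1(\mu,E)$ is a linear isometry, hence bounded and therefore weak-to-weak continuous, so the same subsequence converges weakly to $f$ in $L^1(\mu,E)$. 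This produces the required weakly convergent subsequence and finishes the argument.

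The step carrying the real weight is the appeal to Yannelis's theorem, which packages the genuinely hard analysis (weak compactness of the selector set in the separable convex case, resting ultimately on James-type arguments); given that result, the remaining difficulty I anticipate is purely structural, namely verifying that $\Gamma_0$ is a legitimate measurable, weakly-compact-and-convex-valued multifunction landing in $E_0$ --- in particular the use of Krein--\v{S}mulian for the weak compactness of the closed convex hull and the preservation of measurability under closed convex hulls --- together with the routine but essential observation that weak convergence transfers along the isometric embedding $L^1(\mu,E_0)\hookrightarrow L^1(\mu,E)$.
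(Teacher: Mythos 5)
Your proof is correct, but note at the outset that the paper itself offers \emph{no} proof of Theorem \ref{diest}: it is imported wholesale from \citet{di77} and \citet{drs93}, so there is no internal argument to compare against. Your route is genuinely different from those original proofs --- Diestel's argument runs through the Davis--Figiel--Johnson--Pe{\l}czy\'nski factorization of a weakly compact set through a reflexive space, and \citet{drs93} deduce the dominated criterion from their abstract characterization of relatively weakly compact subsets of $L^1(\mu,E)$ via a.e.\ convergent convex combinations --- whereas you reduce everything to the weak compactness of the selector set $\mathcal{S}^1_{\Gamma_0}$ of a single separable, measurable, weakly compact convex valued multifunction and invoke \citet[Theorem 3.1]{ya91}. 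The reduction itself is clean and all the structural steps check out: Eberlein--\v{S}mulian legitimizes the passage to sequences; the construction $\Gamma_0(t)=\overline{\mathrm{co}}\,\{f_n(t):n\in\N\}$ neatly sidesteps the fact that the paper's definition of well-dominance imposes no measurability on the dominating multifunction $\Gamma$, since the Castaing-type representation by the $f_n$ supplies measurability for free and the Krein--\v{S}mulian theorem supplies weak compactness of the values; and weak-to-weak continuity of the isometric embedding $L^1(\mu,E_0)\hookrightarrow L^1(\mu,E)$ transfers the limit back to the ambient space. The one point you should make explicit is the absence of circularity: your argument is only an honest proof of Theorem \ref{diest} if Yannelis's Theorem 3.1 does not itself rest on Diestel's theorem. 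It does not --- Yannelis proves weak compactness of $\mathcal{S}^1_\Gamma$ by showing that $\mathcal{S}^1_\Gamma$ is bounded, convex and norm-closed and that every functional in $L^\infty(\mu,E^*_{w^*})$ attains its supremum on it (via a measurable selection from the argmax multifunction), and then applies James's theorem --- but since the two results are so close in content, a one-line acknowledgment of where the ``hard'' weak compactness is actually purchased would make the logical dependence transparent. The remaining quibbles are cosmetic: the open versus closed unit ball in the definition of integrable boundedness, and the countably many exceptional null sets absorbed into one, both of which you handle implicitly and correctly.
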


Well-dominance provides a Bochner integral analogue of the Dunford--Pettis criterion for the relative weak compactness in $L^1(\mu,E)$. For the development of the weak compactness in $L^1(\mu,E)$, see also  \cite{di96,du77,kh84a,ta84,ug91}.

\subsection{Gelfand Integrals of Multifunctions}
\label{gel}
A weakly$^*$ scalarly measurable function $f:T\to E^*$ is \textit{Gelfand integrable} over $A\in \Sigma$ if there exists $x^*_A\in E^*$ such that $\langle x,x^*_A \rangle=\int_A\langle x,f(t) \rangle d\mu$ for every $x\in E$. The element $x^*_A$, which is unique by the separation theorem, is called the \textit{Gelfand integral} (or \textit{weak$^*\!$ integral}) of $f$ over $A$, and is denoted by $\mathit{w}^*\text{-}\int_Agd\mu$. Denote by $G^1(\mu,E^*)$ (abbreviated to $G^1_{E^*}$) the space of equivalence classes of $E^*$-valued Gelfand integrable functions on $T$ with respect to weak$^*\!$ scalar equivalence, normed by
$$
\| f \|_{\mathit{G}^1}=\sup_{x\in B}\int |\langle x,f(t) \rangle| d\mu.
$$
This norm is called the \textit{Gelfand norm} and the normed space $(G^1(\mu,E^*), \|\cdot \|_{\mathit{G}^1})$, in general, is not complete.

Denote by $L^\infty(\mu)\otimes E$ the tensor product of $L^\infty(\mu)$ and $E$. A typical tensor $f^*$ in $L^\infty(\mu)\otimes E$ has a (not necessarily unique) representation $f^*=\sum_{i=1}^n\varphi_i\otimes x_i$ with $\varphi_i\in L^\infty(\mu)$, $x_i\in E$, $i=1,\dots,n$. A bilinear form on $G^1(\mu,E^*)\times (L^\infty(\mu)\otimes E)$ is given by
$$
\langle f,f^* \rangle=\sum_{i=1}^n\int \varphi_i(t)\langle x_i,f(t) \rangle d\mu=\sum_{i=1}^n\left\langle x_i,\mathit{w}^*\text{-}\int \varphi_ifd\mu \right\rangle
$$
for $f\in G^1(\mu,E^*)$ and $f^*=\sum_{i=1}^n\varphi_i\otimes x_i\in L^\infty(\mu)\otimes E$. The pair of these spaces $\langle G^1(\mu,E^*),L^\infty(\mu)\otimes E \rangle$ equipped with this bilinear form is a dual system. Thus, it is possible to define the coarsest topology on $G^1(\mu,E^*)$ such that the linear functional $f\mapsto \langle f,f^* \rangle$ is continuous for every $f^*\in L^\infty(\mu)\otimes E$, denoted by $\sigma(G^1_{E^*},L^\infty\otimes E)$, which is the topology of pointwise convergence on $L^\infty(\mu)\otimes E$. It is evident that the $\sigma(G^1_{E^*},L^\infty\otimes E)$-\hspace{0pt}topology is coarser than the weak topology $\sigma(G^1_{E^*},(G^1_{E^*})^*)$. A net $\{ f_\alpha \}$ in $G^1(\mu,E^*)$ converges to $f\in G^1(\mu,E^*)$ for the $\sigma(G^1_{E^*},L^\infty\otimes E)$-\hspace{0pt}topology if and only if  for every $x\in E$ the net $\{ \langle x,f_\alpha(\cdot) \rangle \}$ in $L^1(\mu)$ converges weakly to $\langle x,f(\cdot) \rangle \in L^1(\mu)$. 

Let $\Gamma:T\twoheadrightarrow E^*$ be a multifunction. Denote by $\overline{\mathrm{co}}^{\mathit{\,w}^*}\Gamma:T\twoheadrightarrow E^*$ the multifunction defined by the weakly$^*\!$ closed convex hull of $\Gamma(t)$. A multifunction $\Gamma$ is \textit{measurable} if the set $\{t\in T\mid \Gamma(t)\cap U\ne \emptyset \}$ is in $\Sigma$ for every weakly$^*\!$ open subset $U$ of $E^*$. If $E$ is separable, then $E^*$ is a Suslin space, and hence, a multifunction $\Gamma:T\twoheadrightarrow E^*$ with measurable graph in $\Sigma\otimes \mathrm{Borel}(E^*,\mathit{w}^*)$ admits a $\mathrm{Borel}(E^*,\mathit{w}^*)$-\hspace{0pt}measurable (or equivalently, weakly$^*\!$ measurable) selector (see \cite[Theorem III.22]{cv77}). Let $s:(\cdot, C):E\to \R\cup \{+\infty \}$ be the \textit{support function} of a set $C\subset E^*$ defined by $s(x, C)=\sup_{x^*\in C}\langle x,x^* \rangle$. A multifunction $\Gamma$ is \textit{weakly$^*\!$ scalarly measurable} if the scalar function $s(x,\Gamma(\cdot)):T\to \R\cup\{ +\infty \}$ is measurable for every $x\in E$. If $E$ is separable and $\Gamma$ has weakly$^*\!$ compact, convex values, then $\Gamma$ is scalarly measurable if and only if it is measurable (see \cite[Theorem 18.31]{ab06}). In view of $s(x,\Gamma)=s(x,\overline{\mathrm{co}}^{\mathit{\,w}^*}\Gamma)$ for every $x\in E$, if $E$ is separable and $\Gamma$ has weakly$^*$ compact values, then $\overline{\mathrm{co}}^{\mathit{\,w}^*}\Gamma$ is measurable.

Let $B^*$ be the open unit ball of $E^*$. A multifunction $\Gamma:T\twoheadrightarrow E^*$ is \textit{integrably bounded} if there exists $\varphi\in L^1(\mu)$ such that $\Gamma(t)\subset \varphi(t)B^*$ for every  $t\in T$. If $\Gamma$ is integrably bounded with measurable graph, then it admits a Gelfand integrable selector whenever $E$ is separable. Denote by $\mathcal{S}^{1,\mathit{w}^*}_\Gamma$ the set of Gelfand integrable selections of $\Gamma$. The Gelfand integral of $\Gamma$ is conventionally defined as $\mathit{w}^*\text{-}\int\Gamma d\mu:=\{ \mathit{w}^*\text{-}\int fd\mu \mid f\in \mathcal{S}^{1,\mathit{w}^*}_\Gamma \}$. If $\Gamma$ is an integrably bounded, weakly$^*$ closed, convex-valued multifunction with measurable graph, then $\mathcal{S}^{1,\mathit{w}^*}_\Gamma$ is compact in the $\sigma(G^1_{E^*},L^\infty\otimes E)$-\hspace{0pt}topology whenever $E$ is separable (see \cite[Lemma 2.1]{sa16}). 

A subset $K$ of $G^1(\mu,E^*)$ is said to be \textit{well-\hspace{0pt}dominated} if there is an integrably bounded, weakly$^*$ compact-\hspace{0pt}valued multifunction $\Gamma:T\twoheadrightarrow E^*$ such that $f(t)\in \Gamma(t)$ a.e.\ $t\in T$ for every  $f\in K$. It is evident that $K$ is well-dominated if and only if it is integrably bounded. Indeed, the dominating multifunction $\Gamma$ for $K$ is taken so as to satisfy $K\subset \varphi(t) B^*\equiv \Gamma(t)$ with $\varphi\in L^1(\mu)$. A subset $\K$ of $\M^1(\mu,E^*)$ is said to be \textit{well-dominated} if there exists an integrably bounded, weakly$^*\!$ compact-valued multifunction $\tilde{\Gamma}:T\twoheadrightarrow E^*$ such that $\Gamma(t)\subset \tilde{\Gamma}(t)$ for every $\Gamma\in \M^1(\mu,E^*)$ and $t\in T$. 

\begin{thm}[\citet{cm02}]
\label{cm1}
Let $(T,\Sigma,\mu)$ be a finite measure space and $E$ be a separable Banach space. Then a uniformly integrable subset of $G^1(\mu,E^*)$ is sequentially compact in the $\sigma(G^1_{E^*},L^\infty\otimes E)$-\hspace{0pt}topology. 
\end{thm}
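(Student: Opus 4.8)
The plan is to transplant the scalar Dunford--Pettis argument to the vector setting by using the separability of $E$ to localize at countably many continuous linear functionals, and then to reassemble a genuine Gelfand integrable limit through a vector-measure and weak$^*$ Radon--Nikodym argument. Write $K$ for the given uniformly integrable set and fix an arbitrary sequence $\{f_n\}$ in $K$. Since $\mu$ is finite, uniform integrability forces $L^1$-boundedness of the norms, so set $M:=\sup_{f\in K}\int\|f(t)\|\,d\mu<\infty$; note $t\mapsto\|f(t)\|=\sup_{x\in D}\langle x,f(t)\rangle$ over a countable dense $D\subset B$ is measurable, by separability. First I would observe that for each fixed $x\in E$ the scalar functions $\langle x,f(\cdot)\rangle$, $f\in K$, form a bounded and uniformly integrable family in $L^1(\mu)$, because $|\langle x,f(t)\rangle|\le\|x\|\,\|f(t)\|$; hence the Dunford--Pettis criterion (\cite[Corollary IV.8.11]{ds58}) makes this family relatively weakly compact, and Eberlein--\v{S}mulian makes it relatively weakly sequentially compact. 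Choosing a countable dense set $\{x_k\}_{k\in\N}\subset E$ and extracting nested subsequences, a diagonal procedure produces a single subsequence $\{f_{n_j}\}$ along which $\langle x_k,f_{n_j}(\cdot)\rangle$ converges weakly in $L^1(\mu)$ to some $h_k$, simultaneously for every $k$.

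Next I would extend this convergence from the dense set to all of $E$ by an equicontinuity argument. The maps $x\mapsto\langle x,f_{n_j}(\cdot)\rangle$ are uniformly Lipschitz from $E$ into $L^1(\mu)$ with constant $M$, since $\int|\langle x-y,f_{n_j}(t)\rangle|\,d\mu\le M\|x-y\|$; an $\varepsilon/3$-estimate against $\{x_k\}$ then shows that $\int\varphi\,\langle x,f_{n_j}\rangle\,d\mu$ is Cauchy for every $x\in E$ and every $\varphi\in L^\infty(\mu)$. Consequently $\langle x,f_{n_j}(\cdot)\rangle$ converges weakly in $L^1(\mu)$ to a function $Sx\in L^1(\mu)$, and $x\mapsto Sx$ is linear with $\|Sx\|_1\le M\|x\|$.

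The hard part will be to realize $S$ as an honest element of $G^1(\mu,E^*)$, that is, to find a weak$^*$ scalarly measurable, Gelfand integrable $f$ with $\langle x,f(\cdot)\rangle=Sx$ a.e.\ for every $x$. I would introduce the set function $F\colon\Sigma\to E^*$ determined by $\langle x,F(A)\rangle=\int_A Sx\,d\mu=\lim_j\int_A\langle x,f_{n_j}\rangle\,d\mu$; this is well defined as an element of $E^*$ because $x\mapsto\int_A Sx\,d\mu$ is linear with $|\int_A Sx\,d\mu|\le\|x\|\sup_j\int_A\|f_{n_j}\|\,d\mu$, and uniform integrability then gives $\|F(A)\|_{E^*}\to0$ as $\mu(A)\to0$, so $F$ is a $\mu$-continuous, countably additive $E^*$-valued measure. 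For bounded variation I would, given a finite partition $\{A_i\}$ and near-norming unit vectors $x_i$ with $\langle x_i,F(A_i)\rangle\approx\|F(A_i)\|$, use that each $\int_{A_i}\langle x_i,f_{n_j}\rangle\,d\mu$ genuinely converges as $j\to\infty$, so the finite sum may be interchanged with the limit to obtain $\sum_i\|F(A_i)\|\le\lim_j\sum_i\int_{A_i}\langle x_i,f_{n_j}\rangle\,d\mu\le\sup_j\int_T\|f_{n_j}\|\,d\mu\le M$; hence $|F|(T)\le M$. Because $E$ is separable, $E^*$ has the weak$^*$ Radon--Nikodym property (see \cite{du77}), so $F$ admits a weak$^*$ density $f$ with $\int_A\langle x,f\rangle\,d\mu=\langle x,F(A)\rangle$ and $\int\|f\|\,d\mu=|F|(T)\le M$, whence $f\in G^1(\mu,E^*)$; comparing $\langle x,F(A)\rangle=\int_A\langle x,f\rangle\,d\mu$ with $\int_A Sx\,d\mu$ for all $A\in\Sigma$ forces $\langle x,f(\cdot)\rangle=Sx$ a.e.\ for each $x$.

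Finally I would conclude: by Steps~2--3, $\langle x,f_{n_j}(\cdot)\rangle$ converges weakly in $L^1(\mu)$ to $\langle x,f(\cdot)\rangle$ for every $x\in E$, which is exactly the characterization of convergence $f_{n_j}\to f$ in the $\sigma(G^1_{E^*},L^\infty\otimes E)$-topology recorded before the statement; since $\{f_n\}\subset K$ was arbitrary, $K$ is sequentially compact in that topology. The two delicate points I expect to require care are the variation estimate for $F$ (where one must use that each term of the finite sum converges along the common index $j$, not merely a $\limsup$ bound, together with near-norming functionals) and the verification that the weak$^*$ density delivered by the Radon--Nikodym step is genuinely Gelfand integrable with $\int\|f\|\,d\mu<\infty$, rather than only weak$^*$ scalarly measurable.
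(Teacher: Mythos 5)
The paper does not actually prove Theorem~\ref{cm1}: it is stated as a cited result of Cornet and Medecin, so there is no in-text argument to compare yours against. Judged on its own, your proof is correct and complete, and it follows what is essentially the canonical route for this result: scalarize against a countable dense subset of $E$, apply the Dunford--Pettis criterion and Eberlein--\v{S}mulian to each scalar family, diagonalize, extend to all of $E$ by the uniform Lipschitz estimate $\int|\langle x-y,f_{n_j}\rangle|\,d\mu\le M\|x-y\|$ (note that $L^1(\mu)$ is weakly sequentially complete, so the weakly Cauchy sequences you produce automatically converge weakly), and then reassemble the limit operator $S$ into a genuine element of $G^1(\mu,E^*)$ via the $\mu$-continuous, bounded-variation vector measure $F$ and the weak$^*$ Radon--Nikod\'ym property of $E^*$. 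The two points you flag as delicate are indeed the right ones, and your treatment of both is sound: the variation estimate correctly exchanges the finite sum with the limit in $j$ because each term converges along the common subsequence, and the weak$^*$ density can be built concretely under separability (take scalar densities $g_{x_k}$ dominated by $d|F|/d\mu$ on a countable $\mathbb{Q}$-linear dense set, discard one null set, and extend by continuity), which yields $\int\|f\|\,d\mu\le|F|(T)\le M$ and hence Gelfand integrability.

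One small caveat deserves a sentence. Your opening claim that uniform integrability ``forces'' $M=\sup_{f\in K}\int\|f\|\,d\mu<\infty$ is not a consequence of the paper's definition ($\lim_{\mu(A)\to0}\sup_{f\in K}\int_A\|f\|\,d\mu=0$) alone: on an atom every weak$^*$ measurable function is a.e.\ constant, and an unbounded family of constants satisfies that condition vacuously, so boundedness can fail (and the theorem itself would fail) unless either norm-boundedness is built into the hypothesis or one uses the equivalent truncation form $\lim_{c\to\infty}\sup_{f\in K}\int_{\{\|f\|>c\}}\|f\|\,d\mu=0$. On nonatomic spaces, or with the standard definition, your assertion is correct, and everything downstream of it is fine.
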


\subsection{Saturation and the Lyapunov Convexity Theorem}
A finite measure space $(T,\Sigma,\mu)$ is said to be \textit{essentially countably generated} if its $\sigma$-\hspace{0pt}algebra can be generated by a countable number of subsets together with the null sets; $(T,\Sigma,\mu)$ is said to be \textit{essentially uncountably generated} whenever it is not essentially countably generated. Let $\Sigma_S=\{ A\cap S\mid A\in \Sigma \}$ be the $\sigma$-\hspace{0pt}algebra restricted to $S\in \Sigma$. Denote by $L^1_S(\mu)$ the space of $\mu$-integrable functions on the measurable space $(S,\Sigma_S)$ whose element is identified with a restriction of a function in $L^1(\mu)$ to $S$. An equivalence relation $\sim$ on $\Sigma_S$ is given by $A\sim B \Leftrightarrow \mu(A\triangle B)=0$, where $A\triangle B$ is the symmetric difference of $A$ and $B$ in $\Sigma$. The collection of equivalence classes is denoted by $\Sigma(\mu)=\Sigma/\sim$ and its generic element $\widehat{A}$ is the equivalence class of $A\in \Sigma$. We define the metric $\rho$ on $\Sigma(\mu)$ by $\rho(\widehat{A},\widehat{B})=\mu(A\triangle B)$. Then $(\Sigma(\mu),\rho)$ is a complete metric space (see \cite[Lemma 13.13]{ab06} or \cite[Lemma III.7.1]{ds58}) and $(\Sigma(\mu),\rho)$ is separable if and only if $L^1(\mu)$ is separable (see \cite[Lemma 13.14]{ab06}). The \textit{density} of $(\Sigma(\mu),\rho)$ is the smallest cardinal number of the form $|\U|$, where $\U$ is a dense subset of $\Sigma(\mu)$. 

\begin{dfn}
A finite measure space $(T,\Sigma,\mu)$ is \textit{saturated} if $L^1_S(\mu)$ is nonseparable for every $S\in \Sigma$ with $\mu(S)>0$. We say that a finite measure space has the \textit{saturation property} if it is saturated.
\end{dfn}

Saturation implies nonatomicity and several equivalent definitions for saturation are known; see \cite{fk02,fr12,hk84,ks09}. One of the simple characterizations of the saturation property is as follows. A finite measure space $(T,\Sigma,\mu)$ is saturated if and only if $(S,\Sigma_S,\mu)$ is essentially uncountably generated for every $S\in \Sigma$ with $\mu(S)>0$. The saturation of finite measure spaces is also synonymous with the uncountability of the density of $\Sigma_S(\mu)$ for every $S\in \Sigma$ with $\mu(S)>0$; see \cite[331Y(e)]{fr12}. An germinal notion of saturation already appeared in \cite{ka44,ma42}. The significance of the saturation property lies in the fact that it is necessary and sufficient for the weak/weak$^*$ compactness and the convexity of the Bochner/Gelfand integral of a multifunction as well as the Lyapunov convexity theorem in separable Banach spaces/their dual spaces. 

In particular, the following characterization of the saturation property will be useful for many applications as well as in proving the exact version of Fatou's lemma.   

\begin{prop}[\citet{ks13,ks15,po08,sy08}]
\label{lyp}
Let $(T,\Sigma,\mu)$ be a finite measure space and let $E$ be an infinite-\hspace{0pt}dimensional separable Banach space. Then the following conditions are equivalent.
\begin{enumerate}[\rm(i)]
\item $(T,\Sigma,\mu)$ is saturated. 
\item For every $\mu$-\hspace{0pt}continuous vector measure $m:\Sigma\to E$, its range $m(\Sigma)$ is weakly compact and convex in $E$.
\item For every $\mu$-\hspace{0pt}continuous vector measure $m:\Sigma\to E^*$, its range $m(\Sigma)$ is weakly$^*\!$ compact and convex in $E^*$.
\item For every integrably bounded, weakly compact-\hspace{0pt}valued multifunction $\Gamma:T\twoheadrightarrow E$ with the measurable graph, $\int\Gamma d\mu=\int\overline{\mathrm{co}}\,\Gamma d\mu$.
\item For every integrably bounded, weakly$^*\!$ compact-\hspace{0pt}valued multifunction $\Gamma:T\twoheadrightarrow E^*$ with the measurable graph, $\mathit{w}^*\text{-}\int\Gamma d\mu=\mathit{w}^*\text{-}\int\overline{\mathrm{co}}^{\,\mathit{w}^*}\Gamma d\mu$.
\end{enumerate}
In particular, the implications (i) $\Rightarrow$ (ii), (iii), (iv), (v) are true for every separable Banach space. 
\end{prop}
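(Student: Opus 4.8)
The plan is to establish (i) $\Rightarrow$ (ii), (iii), (iv), (v) for an arbitrary separable $E$, and then to close all the equivalences by proving the contrapositive of each converse at once, namely that the failure of saturation forces the failure of every one of (ii)--(v); infinite-dimensionality of $E$ will enter only in this last step. The two standing tools are, on the existence side, the Bartle--Dunford--Schwartz theorem (the range of any countably additive vector measure is relatively weakly compact) together with the Banach--Alaoglu theorem for the dual, and, on the combinatorial side, the measure-algebra reformulation of saturation: by Maharam's classification (see \cite{ks09,fr12}), $(T,\Sigma,\mu)$ is saturated if and only if for every countable family $\{g_n\}\subset L^1(\mu)$ and every measurable $\phi:T\to[0,1]$ there is $C\in\Sigma$ with $\int_C g_n\,d\mu=\int\phi g_n\,d\mu$ for all $n$. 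I shall refer to this as the simultaneous-splitting property.

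For (i) $\Rightarrow$ (ii) I would fix a countable norming family $\{x_n^*\}\subset B_{E^*}$, which exists because $E$ is separable, and let $g_n\in L^1(\mu)$ be a Radon--Nikodym density of the scalar measure $\langle m(\cdot),x_n^*\rangle$ relative to a control measure of $m$. Applying simultaneous splitting with $\phi=\lambda\chi_A$ produces, for each $A\in\Sigma$ and $\lambda\in[0,1]$, a set $C\subset A$ with $\langle m(C),x_n^*\rangle=\lambda\langle m(A),x_n^*\rangle$ for all $n$; since $\{x_n^*\}$ is norming this forces $m(C)=\lambda m(A)$, and the usual Lyapunov argument upgrades this splitting to convexity of $m(\Sigma)$. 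Relative weak compactness of $m(\Sigma)$ is automatic from Bartle--Dunford--Schwartz, and the splitting also yields weak closedness, so $m(\Sigma)$ is weakly compact and convex. The proof of (i) $\Rightarrow$ (iii) is word-for-word the same, with the countable total set drawn from a dense sequence in $E$ (which norms $E^*$ for the weak$^*$ topology) and with relative weak$^*$ compactness of the bounded range supplied by Banach--Alaoglu in place of Bartle--Dunford--Schwartz.

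To pass to multifunctions, i.e. (i) $\Rightarrow$ (iv), only the inclusion $\int\overline{\mathrm{co}}\,\Gamma\,d\mu\subset\int\Gamma\,d\mu$ requires work, the reverse being trivial. Given $x=\int g\,d\mu$ with $g$ a Bochner integrable selector of $\overline{\mathrm{co}}\,\Gamma$, I would represent $g(t)$, which lies in the weakly compact convex set $\overline{\mathrm{co}}\,\Gamma(t)$, as the barycenter of a measurable transition probability (Young measure) concentrated on $\Gamma(t)$, and then purify this Young measure to an ordinary selector $f$ of $\Gamma$ with $\int f\,d\mu=\int g\,d\mu$; the purification is exactly what saturation grants and is itself another face of (ii) (cf.\ \cite{sy08,ks13}). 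The weak compactness of $\mathcal{S}^1_\Gamma$ recorded in Subsection \ref{bo} guarantees that $\int\Gamma\,d\mu$ is weakly compact, hence the two integrals coincide as closed sets. Statement (i) $\Rightarrow$ (v) is obtained identically, now invoking the $\sigma(G^1_{E^*},L^\infty\otimes E)$-compactness of $\mathcal{S}^{1,\mathit{w}^*}_\Gamma$ from Subsection \ref{gel} and the Gelfand integral throughout. None of these four forward implications uses infinite-dimensionality, which accounts for the final clause of the proposition.

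The converses are the crux, and I would prove them uniformly by contraposition. If $(T,\Sigma,\mu)$ is not saturated, there is $S\in\Sigma$ with $\mu(S)>0$ and $L^1_S(\mu)$ separable, so Maharam's theorem identifies the nonatomic part of $(S,\Sigma_S,\mu)$, modulo null sets, with the Lebesgue space on $[0,1]$, on which a countable independent Rademacher-type system $\{r_n\}\subset L^\infty$ defeats simultaneous splitting. Since $E$ is infinite-dimensional, the Banach--Mazur theorem furnishes a normalized basic sequence $\{e_n\}\subset E$, and I would set $H(t)=\sum_n 2^{-n}r_n(t)e_n$, which converges absolutely with $\|H(t)\|\le 1$, so that $H\in L^1(\mu,E)$ and $m(A)=\int_{A\cap S}H\,d\mu$ is a $\mu$-continuous $E$-valued measure whose range is non-convex, refuting (ii); the two-point multifunction $t\mapsto\{0,H(t)\}$ then has Bochner integral equal to $m(\Sigma)$, while $\int\overline{\mathrm{co}}\{0,H\}\,d\mu$ is convex, so (iv) fails as well, and the dual constructions with a basic sequence in $E^*$ refute (iii) and (v). The main obstacle lies precisely here: one must verify that the transported measure is genuinely countably additive and that its range (respectively Gelfand range) is non-convex in the infinite-dimensional norm (respectively weak$^*$) topology, and that the accompanying multifunctions are graph measurable, integrably bounded and weakly (respectively weakly$^*$) compact-valued so as to meet the hypotheses of (iv) and (v). This is the only point at which infinite-dimensionality is indispensable, for in finite dimensions the classical Lyapunov convexity theorem makes (ii)--(v) hold with no saturation assumption.
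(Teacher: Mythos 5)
The paper does not actually prove Proposition \ref{lyp}: it imports it from \cite{ks13,ks15} (for the equivalence of (i), (ii), (iii)) and \cite{po08,sy08} (for (i), (iv), (v)), so your sketch can only be measured against those sources. Your forward implications follow the same architecture as the cited proofs: a countable separating family reduces the vector statement to a simultaneous scalar splitting that saturation delivers, and the multifunction statements follow by purifying a Young-measure representation of a selector of $\overline{\mathrm{co}}\,\Gamma$. Two caveats there. First, the ``simultaneous-splitting property'' is not a consequence of Maharam's classification alone; it is itself one of the nontrivial equivalent forms of saturation (essentially the $\ell^1$- or $c_0$-valued Lyapunov theorem), so invoking it as a tool absorbs most of the substance of (i) $\Rightarrow$ (ii) into a black box. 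Second, the weak compactness of $\mathcal{S}^1_\Gamma$ recorded in Subsection \ref{bo} is stated for \emph{convex}-valued $\Gamma$, so it applies to $\overline{\mathrm{co}}\,\Gamma$ rather than to $\Gamma$; this is harmless only because exact purification makes the closure argument unnecessary.

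The genuine gap is in the necessity direction. You take an independent Rademacher-type system $\{r_n\}$, set $H=\sum_n 2^{-n}r_n e_n$, and assert that $m(A)=\int_{A\cap S}H\,d\mu$ has nonconvex range, but you give no argument, and with the Rademachers alone the assertion fails at the only midpoint one would naturally test: on $[0,1]$ the set $C=\{r_1r_2=1\}$ satisfies $\mu(C)=\frac{1}{2}$ and $\int_C r_n\,d\mu=0=\frac{1}{2}\int_0^1 r_n\,d\mu$ for every $n$, because $C$ is a union of dyadic intervals of level two and every $r_n$ has mean zero on each of them. So the Rademachers do \emph{not} defeat simultaneous splitting; the product $r_1r_2$ is a Walsh function lying outside their span, and totality is exactly what is missing. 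The correct construction takes a countable family $\{h_n\}$ that is total in $L^1_S(\mu)$ --- available precisely because non-saturation makes $L^1_S(\mu)$ separable --- together with a biorthogonal system $\{(e_n,e_n^*)\}$ arising from a basic sequence in $E$; then $m(C)=\frac{1}{2}m(S)$ forces $\int(\chi_C-\frac{1}{2}\chi_S)h_n\,d\mu=0$ for all $n$, hence $\chi_C=\frac{1}{2}\chi_S$ a.e.\ by totality, which is impossible for an indicator of a set of positive measure. This is the argument of \cite{ks13}, and in essence of \cite{ru89}. You should also dispose separately of the purely atomic case, where there is no nonatomic part to transport to $[0,1]$ but where (ii)--(v) fail for trivial reasons.
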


\begin{rem}
The equivalence (i) $\Leftrightarrow$ (ii) $\Leftrightarrow$ (iii) is proven by \cite{ks13,ks15} and (i) $\Leftrightarrow$ (iv) $\Leftrightarrow$ (v) is due to \cite{po08,sy08}. The equivalence of saturation and the ``bang-\hspace{0pt}bang principle'' in separable Banach spaces/their dual spaces is demonstrated by \cite{ks14b,ks16a,sa16}. For the equivalence of saturation with the ``purification principle'', see \cite{ks14b,ls09,po09,sa16}, for that with the convexity of the distribution of a multifunction, see \cite{ks09,ks14b}, and for that with the ``minimization principle'', see \cite{sa16}. As an application to game theory, \cite{ks09} provides an intriguing characterization of the saturation property in terms of the existence of Nash equilibria in large games.\fn{We emphasize yet again that in our  ignoring of large parts of the applied mathematics literature, we ignore the role of Fatou's lemma in the convergence of set-valued conditional expectations and the strong law of large numbers for multifunctions; see  \cite{hi85,hu77,lok02,mo05} for example.} 
\end{rem}

\section{Fatou's Lemma Based on Bochner Integration}
\label{fat2}
We established in Subsection \ref{fat1} a prototype of Fatou's lemma in finite dimensions (Theorem \ref{thm1}). The purpose of this section is to show how it can be  extended to a separable Banach space setting under Bochner integration. We first show an approximate version of Fatou's lemma under nonatomicity and discuss the hurdles that need to be overcome  in removing  the approximate operation.  To overcome this difficulty, we show  the saturation of measure spaces is inevitable to establish an exact version of Fatou's lemma. Finally, we characterize the saturation property itself in terms of the exact Fatou's lemma.

\subsection{Approximate Fatou's Lemma}
\label{app1}
The \textit{strong upper limit} of a sequence $\{ F_n \}_{n\in \N}$ of subsets in $E$ is defined by
$$
\mathrm{Ls}\left\{ F_n \right\}=\left\{ x\in E \mid \exists \left\{ x_{n_i} \right\}_{i\in \N}: x=\lim_{i\to \infty}x_{n_i},\,x_{n_i}\in F_{n_i}\,\forall i\in \mathbb{N} \right\},
$$
where $\{ x_{n_i} \}_{i\in \N}$ denotes a subsequence of $\{ x_n \}_{n\in \N}\subset E$. We denote by $\mathit{w}\text{-}\lim_{n}x_n$ the weak limit point of a sequence $\{ x_n \}_{n\in \N}$ in $E$. The \textit{weak upper limit} of $\{ F_n \}_{n\in \N}$ of subsets in $E$ is defined by
$$
\mathit{w}\text{-}\mathrm{Ls}\left\{ F_n \right\}=\left\{ x\in E \mid \exists \left\{ x_{n_i} \right\}_{i\in \N}: x=\mathit{w}\text{-}\lim_{i\to \infty}x_{n_i},\,x_{n_i}\in F_{n_i}\,\forall i\in \mathbb{N} \right\}.
$$
The notion of upper limits plays a central role in the  formulation of  Fatou's lemma discussed  in this essay; for  a detailed treatment on the  calculus of limits of sets, see \cite{af90,hi74,mo06}. 

Let $\{ f_n \}_{n\in \N}$ be a sequence in $L^1(\mu,E)$. Since the essential range of a Bochner integrable function is separable by the Pettis measurability theorem (see \cite[Theorem II.1.2]{du77}), we may assume without loss of generality that the range of each $f_n$ is contained in a common separable Banach space. To this end, let $V_n$ be the essential range of $f_n$ and take the linear span of $V:=\bigcup_{n\in \N}V_n$. Then each $f_n$ essentially takes values in the separable subspace space $V$ of $E$. This observation guarantees that the multifunction $t\mapsto \mathit{w}\text{-}\mathrm{Ls}\,\{ f_n(t) \}$ from $T$ into $E$ is measurable whenever $\{ f_n \}_{n\in \N}$ is well-dominated  (see \cite[Proposition 4.3]{he90}). Furthermore, the multifunction $t\mapsto \overline{\mathrm{co}}\,\mathit{w}\text{-}\mathrm{Ls}\,\{ f_n(t) \}$ is measurable, too (see \cite[Theorem 8.2.2]{af90}). 

Our reference point under investigation is the following result due to \cite{km86}, which is the first work in the literature on Fatou's lemma in infinite-\hspace{0pt}dimensions, where the slightly improved version of the original result with the current form was given in \cite{ks14a}.\footnote{For another variant of an approximate Fatou's lemma in Banach spaces, see \cite{ba88}.} 

\begin{thm}[\citet{km86}]
\label{km1}
Let $(T,\Sigma,\mu)$ be a finite measure space and $E$ be a Banach space. If $\{ f_n \}_{n\in \N}$ is a well-\hspace{0pt}dominated sequence in $L^1(\mu,E)$, then for every $\varepsilon>0$ there exists $f\in L^1(\mu,E)$ with the following properties. 
\begin{enumerate}[\rm (i)]
\item $f(t)\in \mathit{w}\text{-}\mathrm{Ls}\left\{ f_n(t) \right\}$ a.e.\ $t\in T$;
\item $\displaystyle\int fd\mu\in \mathit{w}\text{-}\mathrm{Ls}\left\{ \int f_nd\mu \right\}+\varepsilon B$.
\end{enumerate}
\end{thm}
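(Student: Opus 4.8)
The plan is to realise the two conclusions as, respectively, the selection property and the integral-approximation property of a single weak limit point of the sequence, obtained through the relative weak compactness of Theorem~\ref{diest}; the scalar inclusion of Lemma~\ref{lem1} will furnish a convex-hull membership, and a finite-dimensional (Galerkin) projection will be used to de-convexify it, the projection error being exactly what forces the $\varepsilon B$. First I would carry out the reduction already indicated before the statement: by the Pettis measurability theorem each $f_n$ may be assumed to take values in a common separable closed subspace $V\subseteq E$, so that $t\mapsto \mathit{w}\text{-}\mathrm{Ls}\{f_n(t)\}$ and its closed convex hull are measurable. Well-dominance makes $\{f_n\}_{n\in\N}$ relatively weakly compact in $L^1(\mu,E)$ by Theorem~\ref{diest}, and the Eberlein--\v{S}mulian theorem yields a subsequence (not relabelled) with $f_n\to f_0$ weakly in $L^1(\mu,E)$. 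Since $g\mapsto\int g\,d\mu$ is weak-to-weak continuous into $E$, the integrals converge weakly, giving $\int f_0\,d\mu\in \mathit{w}\text{-}\mathrm{Ls}\{\int f_n\,d\mu\}$ at once.

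The technical heart is to show $f_0(t)\in C(t):=\overline{\mathrm{co}}\,\mathit{w}\text{-}\mathrm{Ls}\{f_n(t)\}$ a.e.\ $t\in T$. I would fix a countable family $\{x^*_k\}_{k\in\N}\subset E^*$ of norm at most one that is norming for $V$. For each $k$ the scalars $\langle f_n(\cdot),x^*_k\rangle$ form a uniformly integrable, bounded sequence converging weakly in $L^1(\mu)$ to $\langle f_0(\cdot),x^*_k\rangle$, so the one-dimensional case of Lemma~\ref{lem1} gives $\langle f_0(t),x^*_k\rangle\le\limsup_n\langle f_n(t),x^*_k\rangle$ a.e.; extracting a weakly convergent sub-subsequence of $f_n(t)$ inside the weakly compact dominating value and reading off its weak limit shows $\limsup_n\langle f_n(t),x^*_k\rangle\le\sup_{y\in C(t)}\langle y,x^*_k\rangle$. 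Discarding the countably many exceptional null sets, $\langle f_0(t),x^*_k\rangle\le\sup_{y\in C(t)}\langle y,x^*_k\rangle$ for every $k$ a.e.; since $C(t)$ is weakly compact and convex and $\{x^*_k\}_{k\in\N}$ separates $V$, the separation theorem forces $f_0(t)\in C(t)$ a.e.

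It then remains to replace $f_0$, which only selects the convex hull, by a genuine selector $f$ of $\Phi:=\mathit{w}\text{-}\mathrm{Ls}\{f_n(\cdot)\}$ with $\|\int f\,d\mu-\int f_0\,d\mu\|<\varepsilon$; together with the first paragraph this delivers (ii), while (i) is immediate from $f(t)\in\Phi(t)$. This de-convexification is the main obstacle, and the precise point at which infinite dimensionality bites, because the classical Lyapunov theorem --- which in the proof of Theorem~\ref{thm1} let one pass from $\overline{\mathrm{co}}\,\mathrm{Ls}$ to $\mathrm{Ls}$ underneath the integral --- is unavailable in $E$. On each atom the weak limit defining $f_0$ is actually attained (testing weak convergence against $\mathbf{1}_A\otimes x^*$ shows the constant value of $f_0$ on $A$ is a weak limit of the constant values of the $f_n$), so $f_0$ already lies in $\Phi$ there and nothing is needed. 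On the nonatomic part I would pass to a finite-dimensional subspace $E_0\subseteq V$ through a bounded projection $P$ chosen so that $f_0$ is within $\varepsilon/2$ of $E_0$ in $L^1$-norm; the classical Lyapunov convexity theorem applied to the finite-dimensional, integrably bounded multifunction $P\Phi$ produces, exactly as in Theorem~\ref{thm1}, a measurable selector of $P\Phi$ whose integral equals $\int Pf_0\,d\mu$, which I then lift via the Aumann measurable selection theorem to a selector $f$ of $\Phi$, the residual $(I-P)$-contribution being absorbed into the remaining $\varepsilon/2$.

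The delicate point throughout --- and the structural reason the conclusion is only approximate rather than exact --- is that this projection-and-Lyapunov step cannot be performed directly in the infinite-dimensional space $E$: controlling the norm of the integral error simultaneously with the finite-dimensional de-convexification is what consumes the perturbation $\varepsilon B$. Sharpening $\varepsilon$ to $0$ is exactly what the Lyapunov convexity theorem in separable Banach spaces, equivalently the saturation hypothesis of Proposition~\ref{lyp}, will later supply, which is why the exact version of the lemma is reserved for saturated measure spaces.
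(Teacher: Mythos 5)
Your architecture (weak compactness via Theorem~\ref{diest}, a weak limit $f_0$, membership of $f_0(t)$ in $\overline{\mathrm{co}}\,\mathit{w}\text{-}\mathrm{Ls}\{f_n(t)\}$, then an approximate de-convexification) is the intended one --- the paper itself gives no proof of Theorem~\ref{km1} beyond pointing to Lemma~\ref{lem2} as the required ingredient --- but two of your steps contain genuine gaps. First, the scalarization proof of $f_0(t)\in C(t)$: testing against a countable \emph{norming} family $\{x^*_k\}$ does not separate a point from a closed convex set. Already in $\R^2$ with the sup norm, the norming family $\{\pm e_1^*,\pm e_2^*\}$, the segment $C=\{(s,s):s\in[0,1]\}$ and the point $z=(1,0)$ satisfy $\langle z,x^*_k\rangle\le s(x^*_k,C)$ for all four functionals while $z\notin C$. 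To run a separation argument you would need $\{x^*_k\}$ to be dense in $B_{E^*}$ for the topology of uniform convergence on the relevant weakly compact set, and since the dominating set $\Gamma(t)$ varies with $t$ (and $E^*$ need not be norm separable), no single countable family serves almost every $t$; the limiting argument also fails because the support function is only weakly$^*$ lower semicontinuous, which is the wrong direction. The correct route to this inclusion is Lemma~\ref{lem2}, whose standard proof goes through Mazur's theorem: weak convergence in $L^1(\mu,E)$ yields norm-convergent convex combinations, hence a.e.\ convergence along a subsequence, hence $f_0(t)\in\bigcap_n\overline{\mathrm{co}}\{f_m(t):m\ge n\}$, which one then compares with $\overline{\mathrm{co}}\,\mathit{w}\text{-}\mathrm{Ls}\{f_n(t)\}$ using the pointwise weak compactness supplied by well-dominance.

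Second, and more seriously, the de-convexification via a finite-dimensional projection $P$ onto $E_0$ does not control the error. Your construction gives $\int Pf\,d\mu=\int Pf_0\,d\mu$, so $\int f\,d\mu-\int f_0\,d\mu=\int(I-P)(f-f_0)\,d\mu$; choosing $E_0$ so that $f_0$ is $L^1$-close to $E_0$ bounds $\int(I-P)f_0\,d\mu$, but the lifted selector $f(t)\in\Gamma(t)$ has no reason to be near $E_0$, and a weakly compact set is in general not within any positive norm distance of a finite-dimensional subspace (consider the unit ball of $\ell^2$), so $\int(I-P)f\,d\mu$ cannot be absorbed into $\varepsilon/2$. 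The standard argument instead establishes the \emph{norm-closure} convexity of the integral on the nonatomic part --- $\mathrm{cl}\int\Phi\,d\mu=\mathrm{cl}\int\overline{\mathrm{co}}\,\Phi\,d\mu$ in the sense of Hiai--Umegaki, proved by approximating selectors by simple functions and applying the scalar Lyapunov theorem, or equivalently by an approximate Carath\'eodory argument against finitely many \emph{functionals} rather than a projection of the space --- and this closure operation is precisely where the $\varepsilon B$ enters. Your observations that the atoms require no work (the weak limit is attained there) and that saturation is what later removes the $\varepsilon$ are both correct.
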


To prove Theorem \ref{km1}, an infinite-dimensional analogue of Lemma \ref{lem1} is required (see \cite{km86,ya88,ya89,ya91} for details), which also plays a crucial role in proving  the main result of the essay.

\begin{lem}[\citet{km86}]
\label{lem2}
Let $(T,\Sigma,\mu)$ be a finite measure space and $E$ be a Banach space. If $\{ f_n \}_{n\in \N}$ is a well-\hspace{0pt}dominated sequence such that $f_n\to f$ weakly in $L^1(\mu,E)$, then
$$
f(t)\in \overline{\mathrm{co}}\,\mathit{w}\text{-}\mathrm{Ls}\left\{ f_n(t) \right \}\quad \text{a.e.\ $t\in T$}.
$$
\end{lem}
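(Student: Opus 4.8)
The plan is to exhibit $f$ as an almost-everywhere pointwise norm limit of convex combinations taken from the tails of the sequence, and then to identify the resulting intersection of tail-hulls with the target set $\overline{\mathrm{co}}\,\mathit{w}\text{-}\mathrm{Ls}\left\{ f_n(t) \right\}$ by a support-function computation. As arranged in the paragraph preceding the lemma, I first reduce to the case where $E$ is separable, replacing $E$ by the closed separable subspace generated by the essential ranges of the $f_n$ and of $f$. Let $\Gamma$ be a dominating multifunction for $\{ f_n \}_{n\in\N}$, so that $\Gamma$ is integrably bounded, say $\Gamma(t)\subset\varphi(t)B$ with $\varphi\in L^1(\mu)$, weakly compact-valued, and $f_n(t)\in\Gamma(t)$ a.e.\ for every $n$. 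Then for a.e.\ $t$ the entire sequence $\{ f_n(t) \}_{n\in\N}$ lies in the weakly compact set $\Gamma(t)$, whose closed convex hull $K(t):=\overline{\mathrm{co}}\,\Gamma(t)$ is again weakly compact by the Krein--Smulian theorem.

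First I would convert the weak convergence $f_n\to f$ in $L^1(\mu,E)$ into pointwise data by way of Mazur's lemma. Deleting finitely many terms does not affect the weak limit, so $f$ belongs to the weak closure, hence---the set being convex---to the norm closure of $\mathrm{co}\left\{ f_n:n\ge m \right\}$ in $L^1(\mu,E)$ for each $m\in\N$. Thus for every $m$ there is a finite convex combination $h_m=\sum_{n\ge m}\lambda^m_n f_n$ with $\| h_m-f \|_1<1/m$, and after passing to a subsequence I may assume $h_{m_j}(t)\to f(t)$ in norm a.e.\ $t$. For such $t$ and each fixed $m$, one has $h_{m_j}(t)\in\mathrm{co}\left\{ f_n(t):n\ge m \right\}\subset C_m(t):=\overline{\mathrm{co}}\left\{ f_n(t):n\ge m \right\}$ for all large $j$; since $C_m(t)$ is norm closed, letting $j\to\infty$ gives $f(t)\in C_m(t)$, whence $f(t)\in\bigcap_{m\in\N}C_m(t)$ a.e.

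The crux, and the step I expect to be the main obstacle, is the pointwise set identity $\bigcap_{m\in\N}C_m(t)=\overline{\mathrm{co}}\,\mathit{w}\text{-}\mathrm{Ls}\left\{ f_n(t) \right\}$, which is precisely where well-dominance (that is, pointwise weak compactness) is indispensable. Each $C_m(t)$ is a weakly compact convex subset of $K(t)$, and the $C_m(t)$ decrease with $m$, so I would compare the two sides through their support functions. Writing $\sigma(x^*,C)=\sup_{z\in C}\langle z,x^* \rangle$ for $C\subset E$ and $x^*\in E^*$, nestedness together with weak compactness yields
$$
\sigma\Bigl(x^*,\bigcap_{m}C_m(t)\Bigr)=\inf_m\sigma(x^*,C_m(t))=\inf_m\sup_{n\ge m}\langle f_n(t),x^* \rangle=\limsup_{n\to\infty}\langle f_n(t),x^* \rangle
$$
for every $x^*\in E^*$, the first equality coming from extracting a weakly convergent subsequence of maximizers that lands in the intersection by nestedness. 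On the other side, weak compactness of $\Gamma(t)$ lets me realize this same $\limsup$ along a weakly convergent subsequence whose limit lies in $\mathit{w}\text{-}\mathrm{Ls}\left\{ f_n(t) \right\}$, giving $\sigma(x^*,\mathit{w}\text{-}\mathrm{Ls}\left\{ f_n(t) \right\})=\limsup_{n}\langle f_n(t),x^* \rangle$ as well. Since the support function is unchanged under passage to the weakly closed convex hull, the two closed convex sets have identical support functions and therefore coincide; combined with $f(t)\in\bigcap_m C_m(t)$ this delivers $f(t)\in\overline{\mathrm{co}}\,\mathit{w}\text{-}\mathrm{Ls}\left\{ f_n(t) \right\}$ a.e.

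I would stress that both support-function identities rest on extracting weakly convergent subsequences inside the pointwise weakly compact values $\Gamma(t)$; this is exactly the feature that fails for a merely bounded (not well-dominated) sequence and that forces the hypothesis. A scalarization alternative---applying the finite-dimensional Lemma \ref{lem1} to the real sequences $\langle f_n(\cdot),x^* \rangle$ to obtain $\langle f(t),x^* \rangle\le\limsup_n\langle f_n(t),x^* \rangle$ a.e.\ for each fixed $x^*$---is available, but upgrading ``a.e.\ for each fixed $x^*$'' to ``a.e.\ for all $x^*$ simultaneously'' needs a countable determining family of functionals and is more delicate than the convex-combination route sketched here.
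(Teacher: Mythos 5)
Your argument is correct, and it is essentially the standard proof of this lemma: the paper itself states the result without proof, deferring to \citet{km86} and Yannelis's papers, where the argument is exactly your two-step scheme --- Mazur's lemma to place $f(t)$ in every tail set $\overline{\mathrm{co}}\{f_n(t):n\ge m\}$ via a.e.\ norm-convergent convex combinations, followed by the identification of $\bigcap_m \overline{\mathrm{co}}\{f_n(t):n\ge m\}$ with $\overline{\mathrm{co}}\,\mathit{w}\text{-}\mathrm{Ls}\{f_n(t)\}$ through support functions, using Eberlein--\v{S}mulian inside the weakly compact values $\Gamma(t)$. Your localization of where well-dominance enters (attainment of $\limsup_n\langle f_n(t),x^*\rangle$ by a weak cluster point, and the passage from $\inf_m$ of support functions to the support function of the intersection) is exactly right, so nothing further is needed.
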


Under the nonatomicity assumption, however, the $\varepsilon$-\hspace{0pt}approximation (or the closure operation) cannot be removed from Theorem \ref{km1}. Such a counterexample was constructed by \cite{ru89} based on the famous failure of the Lyapunov convexity theorem for an $l^2$-valued vector measure on the Lebesgue unit interval and then fortified with any infinite-dimensional Banach space by \cite{ks14a} to the current general form. 

\begin{prop}[\citet{ks14a,ru89}]
\label{rust1}
For every essentially countably generated, nonatomic, finite measure space $(T,\Sigma,\mu)$ and infinite-\hspace{0pt}dimensional Banach space $E$, there exists a well-\hspace{0pt}dominated sequence $\{ f_n \}_{n\in \N}$ in $L^1(\mu,E)$ with the following properties. 
\begin{enumerate}[\rm (i)]
\item $\displaystyle\mathrm{Ls}\left\{ \int f_nd\mu \right\}\not\subset \int \mathit{w}\text{-}\mathrm{Ls}\left\{ f_n  \right\}d\mu$. 
\item There exists no $f\in L^1(\mu,E)$ such that
\begin{enumerate}[\rm (a)]
\item $f(t)\in \mathit{w}\text{-}\mathrm{Ls}\left\{ f_n(t) \right\}$ a.e.\ $t\in T$; 
\item $\displaystyle\int fd\mu\in \mathit{w}\text{-}\mathrm{Ls}\left\{ \int f_nd\mu \right\}$.
\end{enumerate}
\end{enumerate}
\end{prop}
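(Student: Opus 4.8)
The plan is to exhibit the failure of the exact Fatou property as a manifestation of the non-closedness of the range of an $E$-valued vector measure possessing a Bochner-integrable density --- which is precisely the way in which the Lyapunov convexity theorem breaks down outside the saturated setting (cf.\ Proposition \ref{lyp}). Since $(T,\Sigma,\mu)$ is complete, nonatomic and essentially countably generated, it is isomorphic mod null sets to the Lebesgue unit interval (after normalizing the total mass), so I may transport the whole construction and assume $(T,\Sigma,\mu)=([0,1],\mathcal{L},\lambda)$. The sequence will have the simple form $f_n=\mathbf{1}_{A_n}\,g$ for a fixed $g\in L^1(\mu,E)$ and measurable sets $A_n$; writing $m(A)=\int_A g\,d\mu$ for the associated vector measure and $R=m(\Sigma)$ for its range, the target is a point $v\in\overline{R}\setminus R$ together with sets $A_n$ for which $m(A_n)\to v$ in norm.

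The heart of the matter is the base case $E=\ell^2\cong L^2[0,1]$, for which I take $\tilde{g}(t)=\mathbf{1}_{[0,t]}$, so that $\tilde{m}(A)(x)=\lambda(A\cap[x,1])$, and let $A_n$ be the equidistributed union of $n$ intervals of length $1/(2n)$. Then $\mathbf{1}_{A_n}\to\tfrac12$ weakly and in fact $\tilde{m}(A_n)\to\tilde{v}$ uniformly, where $\tilde{v}=\tfrac12\tilde{m}(T)$ is the function $x\mapsto\tfrac12(1-x)$; on the other hand $\tilde{v}\notin\tilde{R}$, since $\tilde{m}(A)=\tilde{v}$ would force the density $\mathbf{1}_A\equiv\tfrac12$, impossible for an indicator. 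This is the classical $\ell^2$-failure of Lyapunov. The payoff is immediate and does not require the precise computation of the upper limits: because $f_n(t)\in\{0,g(t)\}$ pointwise (a two-point set, a.e.\ genuinely distinct), every selector $h$ of $\mathit{w}\text{-}\mathrm{Ls}\{f_n(\cdot)\}$ again satisfies $h(t)\in\{0,g(t)\}$, whence $h=\mathbf{1}_S g$ and $\int h\,d\mu=m(S)\in R$; thus $\int\mathit{w}\text{-}\mathrm{Ls}\{f_n\}\,d\mu\subseteq R$. Since $m(A_n)\to v$ strongly we have $v\in\mathrm{Ls}\{\int f_n\,d\mu\}$ while $v\notin R\supseteq\int\mathit{w}\text{-}\mathrm{Ls}\{f_n\}\,d\mu$, giving (i); and since the whole sequence $\int f_n\,d\mu$ converges weakly to $v$ we have $\mathit{w}\text{-}\mathrm{Ls}\{\int f_n\,d\mu\}=\{v\}$, so any $f$ satisfying (a) would have $\int f\,d\mu\in R$, whereas satisfying (b) would force $\int f\,d\mu=v\notin R$ --- a contradiction establishing (ii).

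The step I expect to be genuinely delicate is the fortification to an arbitrary infinite-dimensional $E$. The temptation is to write a symmetric density such as $g(t)=\sum_n c_n r_n(t)x_n$ along a normalized sequence $(x_n)$, but this is a trap: the Walsh combinatorics of the Rademacher functions allow one to solve all the constraints $\int_A r_n=0$ simultaneously with $\lambda(A)=\tfrac12$, so the midpoint falls back into $R$ and no counterexample results. What the $L^2$ model exploits is instead a genuine continuum of constraints on the density. I would therefore transport that model through a bounded \emph{injective} linear map $T\colon\ell^2\to E$, which exists in every infinite-dimensional Banach space: fixing a basic sequence $(x_n)$ (Mazur's theorem) one sets $T(a)=\sum_n 2^{-n}a_n x_n$, bounded and injective by the uniqueness of basis expansions. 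Putting $g=T\tilde{g}$ and keeping the same $A_n$, the Bochner integral commutes with $T$, so $m=T\circ\tilde{m}$ and $m(A_n)=T\tilde{m}(A_n)\to T\tilde{v}=:v$ in norm; injectivity of $T$ forces $v\notin R=T(\tilde{R})$ precisely because $\tilde{v}\notin\tilde{R}$. The values $f_n(t)=\mathbf{1}_{A_n}(t)g(t)$ still lie in the norm-compact segment $[0,g(t)]$, so $\{f_n\}$ is well-dominated, and the verification of (i) and (ii) goes through verbatim. The only remaining points needing care are the measure-algebra reduction to $[0,1]$ and the \emph{strong} (not merely weak) convergence $m(A_n)\to v$, which is exactly what feeds the strong upper limit in part (i) and is preserved under the continuous map $T$.
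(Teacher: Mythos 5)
Your proposal is essentially correct, and it follows exactly the two-stage strategy that the paper itself attributes to the cited sources (the paper states Proposition \ref{rust1} without proof, describing it as Rustichini's $\ell^2$ counterexample to the Lyapunov convexity theorem on the Lebesgue unit interval, ``fortified'' to an arbitrary infinite-dimensional Banach space by \citet{ks14a}). The substantive checks all go through: with $\tilde g(t)=\mathbf{1}_{[0,t]}$ one has $\tilde m(A)(x)=\lambda(A\cap[x,1])$, and $\tilde m(A)=\tilde v$ would force $\mathbf{1}_A=\tfrac12$ a.e.\ by Lebesgue differentiation, so $\tilde v\notin\tilde R$, while the equidistributed $A_n$ give $\tilde m(A_n)\to\tilde v$ uniformly, hence in norm; the two-point observation $\mathit{w}\text{-}\mathrm{Ls}\{f_n(t)\}\subset\{0,g(t)\}$ correctly reduces both (i) and (ii) to the single fact $v\notin R$; and transporting through a bounded injective operator built on a basic sequence is a legitimate fortification, since injectivity gives $v=T\tilde v\notin T(\tilde R)=R$ and the dominating multifunction $t\mapsto\{0,g(t)\}$ remains integrably bounded with norm-compact (hence weakly compact) values.

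The one step you flag but do not execute is also the only place where the hypothesis of essential countable generation actually does any work, and it needs more than ``isomorphic mod null sets to the unit interval.'' Such a space need not be point-isomorphic to $[0,1]$; what is true is that its measure algebra is isomorphic to that of $([0,1],\lambda)$, and this isomorphism is realized (using completeness) by a measure-preserving map $\varphi:T\to[0,1]$ with the property that every $A\in\Sigma$ agrees mod $\mu$-null sets with some $\varphi^{-1}(B)$, $B\in\mathcal{L}$. That surjectivity is the crux: it is what guarantees that the transported vector measure $A\mapsto\int_A g\circ\varphi\,d\mu$ has range exactly $R=m(\mathcal{L})$, so that $v$ is still omitted and every selector $\mathbf{1}_S(g\circ\varphi)$ of the weak upper limit still integrates into $R$. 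If $\Sigma$ were strictly richer than $\varphi^{-1}(\mathcal{L})$ mod null sets --- as it is on a saturated space --- the range would be convex and would contain $v$, which is precisely the content of Proposition \ref{lyp}. So the reduction is sound, but you should state explicitly that it is the surjectivity of the induced measure-algebra isomorphism, not mere nonatomicity, that keeps the range from enlarging under the transport.
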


Since the inclusion $\mathrm{Ls}\,\{ \int f_nd\mu \}\subset \mathit{w}\text{-}\mathrm{Ls}\,\{ \int f_nd\mu \}$ is automatic, this counterexample is stronger than the negation: $\mathit{w}\text{-}\mathrm{Ls}\,\{ \int f_nd\mu \}\not\subset \int \mathit{w}\text{-}\mathrm{Ls}\,\{ f_n \}d\mu$. The failure of an ``exact'' Fatou lemma in infinite dimensions leads, as an inevitable consequence, to the introduction of the saturation property on measure spaces.

\subsection{Exact Fatou's Lemma}
To illustrate the power of saturation, we provide the proof of an exact version of Fatou's lemma for completeness.   

\begin{thm}[\citet{ks14a}]
\label{ks}
Let $(T,\Sigma,\mu)$ be a saturated finite measure space and $E$ be a Banach space. If $\{ f_n \}_{n\in \N}$ is a well-\hspace{0pt}dominated sequence in $L^1(\mu,E)$, then
\begin{enumerate}[\rm (i)]
\item $\mathit{w}\text{-}\mathrm{Ls}\left\{ \displaystyle\int f_nd\mu \right\}\subset \displaystyle\int\mathit{w}\text{-}\mathrm{Ls}\left\{ f_n \right\}d\mu$. 
\item There exists $f\in L^1(\mu,E)$ such that
\begin{enumerate}[\rm (a)]
\item $f(t)\in \mathit{w}\text{-}\mathrm{Ls}\left\{ f_n(t) \right\}$ a.e.\ $t\in T$;
\item $\displaystyle\int fd\mu\in \mathit{w}\text{-}\mathrm{Ls}\left\{ \int f_nd\mu \right\}$.
\end{enumerate}
\end{enumerate}
\end{thm}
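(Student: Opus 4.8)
The plan is to run exactly the argument used for the finite-dimensional prototype, Theorem \ref{thm1}, substituting each ingredient by its saturated, infinite-dimensional analogue: the Dunford--Pettis criterion by the well-dominance compactness result of Theorem \ref{diest}, Lemma \ref{lem1} by Lemma \ref{lem2}, and --- this is the decisive replacement --- the classical Lyapunov convexity theorem by the saturation characterization in Proposition \ref{lyp}(iv). First I would reduce to a separable subspace: since each $f_n$ has essentially separable range, I pass to a common separable closed subspace $V\subseteq E$ containing all the essential ranges, so that $\Gamma(t):=\mathit{w}\text{-}\mathrm{Ls}\{f_n(t)\}$ is measurable and, being contained in the weakly compact values $\tilde\Gamma(t)$ of a dominating multifunction, is weakly compact-valued and integrably bounded with measurable graph, as recorded just before Theorem \ref{km1}. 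Because $\{f_n\}$ is well-dominated, Theorem \ref{diest} renders it relatively weakly compact in $L^1(\mu,E)$; by the Eberlein--\v{S}mulian theorem I extract a subsequence $\{f_{n_i}\}$ converging weakly to some $f_0\in L^1(\mu,E)$. The integration operator $g\mapsto\int g\,d\mu$ is a bounded linear map $L^1(\mu,E)\to E$, hence weak-to-weak continuous, so $\int f_{n_i}\,d\mu\to\int f_0\,d\mu$ weakly, which already places $\int f_0\,d\mu\in\mathit{w}\text{-}\mathrm{Ls}\{\int f_n\,d\mu\}$.

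Next I would apply Lemma \ref{lem2} to the (still well-dominated) subsequence to obtain $f_0(t)\in\overline{\mathrm{co}}\,\mathit{w}\text{-}\mathrm{Ls}\{f_{n_i}(t)\}\subset\overline{\mathrm{co}}\,\Gamma(t)$ a.e., so that $f_0\in\mathcal{S}^1_{\overline{\mathrm{co}}\,\Gamma}$ and $\int f_0\,d\mu\in\int\overline{\mathrm{co}}\,\Gamma\,d\mu$. Here I invoke the saturation of $(T,\Sigma,\mu)$ through the implication (i) $\Rightarrow$ (iv) of Proposition \ref{lyp}, which yields $\int\overline{\mathrm{co}}\,\Gamma\,d\mu=\int\Gamma\,d\mu$, whence $\int f_0\,d\mu\in\int\Gamma\,d\mu=\int\mathit{w}\text{-}\mathrm{Ls}\{f_n\}\,d\mu$. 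By the definition of the Bochner integral of a multifunction there is then $f\in\mathcal{S}^1_\Gamma$, that is $f(t)\in\mathit{w}\text{-}\mathrm{Ls}\{f_n(t)\}$ a.e., with $\int f\,d\mu=\int f_0\,d\mu$; combined with the previous paragraph this gives $\int f\,d\mu\in\mathit{w}\text{-}\mathrm{Ls}\{\int f_n\,d\mu\}$, proving (ii). Statement (i) I would then deduce from (ii) by the same device as in Theorem \ref{thm1}: for $x\in\mathit{w}\text{-}\mathrm{Ls}\{\int f_n\,d\mu\}$ pick a subsequence with $\int f_{n_i}\,d\mu\to x$ weakly, apply (ii) to $\{f_{n_i}\}$, and note that the weak upper limit of a weakly convergent sequence is the singleton $\{x\}$, producing $f$ with $f(t)\in\mathit{w}\text{-}\mathrm{Ls}\{f_{n_i}(t)\}\subset\mathit{w}\text{-}\mathrm{Ls}\{f_n(t)\}$ a.e. and $\int f\,d\mu=x$, so $x\in\int\mathit{w}\text{-}\mathrm{Ls}\{f_n\}\,d\mu$.

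The hard part, and the only step that genuinely transcends nonatomicity, is the removal of the closed convex hull, namely the identity $\int\overline{\mathrm{co}}\,\Gamma\,d\mu=\int\Gamma\,d\mu$. In finite dimensions this is precisely the classical Lyapunov theorem that closed the corresponding gap in the proof of Theorem \ref{thm1}; in infinite dimensions it fails under mere nonatomicity --- this is exactly the content of the counterexample in Proposition \ref{rust1} --- and it is saturation, through Proposition \ref{lyp}(iv), that restores it. Everything else is a faithful transcription of the finite-dimensional argument, so I expect no further obstruction once the weak-compactness of the values of $\Gamma$ and its graph measurability (which follow from domination by $\tilde\Gamma$ and the separability of $V$) are verified.
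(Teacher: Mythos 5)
Your proposal is correct and follows essentially the same route as the paper's own proof: relative weak compactness of the well-dominated sequence via Theorem \ref{diest}, weak-to-weak continuity of integration, Lemma \ref{lem2} for the inclusion into the closed convex hull of the weak upper limit, removal of the convex hull by saturation through Proposition \ref{lyp}, and the deduction of (i) from (ii) by passing to a subsequence whose integrals converge weakly to the given point. Your explicit reduction to a common separable subspace and the verification of measurability of $t\mapsto \mathit{w}\text{-}\mathrm{Ls}\{f_n(t)\}$ are details the paper relegates to the preamble of Subsection \ref{app1}, but the argument is the same.
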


\begin{proof}
Since the well-dominated sequence $\{ f_n \}_{n\in \N}$ is relatively weakly compact in $L^1(\mu,E)$, by Theorem \ref{diest}, one can extract from it a subsequence $\{ f_{n_i} \}_{i\in \N}$ that converges weakly to $f_0$ in $L^1(\mu,E)$. Since the integration operator $g\mapsto \int gd\mu$ from $L^1(\mu,E)$ to $E$ is continuous in the weak topologies for $L^1(\mu,E)$ and $E$, we obtain $\int f_id\mu\to \int f_0d\mu$ weakly in $E$, and hence, $\int f_0d\mu\in \mathit{w}\text{-}\mathrm{Ls}\,\{ \int f_nd\mu \}$. It follows from Lemma \ref{lem2} that $f_0(t)\in \overline{\mathrm{co}}\,\mathit{w}\text{-}\mathrm{Ls}\,\{ f_n(t) \}$ a.e.\ $t\in T$. Integrating this inclusion yields 
$$
\int f_0d\mu\in \int\overline{\mathrm{co}}\,\mathit{w}\text{-}\mathrm{Ls}\left\{ f_n \right\}d\mu=\int\mathit{w}\text{-}\mathrm{Ls}\left\{ f_n \right\}d\mu,
$$
where the equality follows from Proposition \ref{lyp}. Hence, there exists a Bochner integrable selector $f$ of $ \mathit{w}\text{-}\mathrm{Ls}\,\{ f_n \}$ such that $\int fd\mu=\int f_0d\mu\in \mathit{w}\text{-}\mathrm{Ls}\,\{ \int f_nd\mu \}$, which verifies condition (ii). 

Condition (i) follows easily from condition (ii). To show this claim, take any $x\in \mathit{w}\text{-}\mathrm{Ls}\,\{ \int f_nd\mu \}$. Then there exists a subsequence $\{ f_{n_i} \}_{i\in \N}$ such that $\mathit{w}\text{-}\lim_i\int f_{n_i}d\mu=x$. It follows from condition (ii) that there exists $f\in L^1(\mu,E)$ such that (a) $f(t)\in \mathit{w}\text{-}\mathrm{Ls}\left\{ f_{n_i}(t) \right\}$ a.e.\ $t\in T$; (b) $\int fd\mu=\mathit{w}\text{-}\lim_j\int f_{n_j}d\mu$, where $\{ f_{n_j} \}_{j\in \N}$ is a further subsequence of $\{ f_{n_i} \}_{i\in \N}$. Integrating the inclusion (a) together with condition (b) yields $x\in \int\mathit{w}\text{-}\mathrm{Ls}\,\{ f_n \}d\mu$.
\end{proof}

\begin{cor}[\citet{ks14a}]
\label{cor1}
Let $(T,\Sigma,\mu)$ be a saturated finite measure space and $E$ be a separable Banach space. If $\{ \Gamma_n \}_{n\in \N}$ is a well-dominated sequence of multifunctions in $\M^1(\mu,E)$, then:
$$
\mathit{w}\text{-}\mathrm{Ls}\left\{ \int\Gamma_n d\mu \right\}\subset \int \mathit{w}\text{-}\mathrm{Ls}\,\{ \Gamma_n \}d\mu.
$$
\end{cor}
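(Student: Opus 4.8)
The plan is to reduce the multifunction statement to the function-level exact Fatou's lemma (Theorem \ref{ks}), in precisely the way the proof of Theorem \ref{au1} reduced Aumann's multifunction lemma to the finite-dimensional Theorem \ref{thm1}; the only substantive changes are that strong upper limits are replaced by weak ones and that Theorem \ref{ks}(i) takes over the role of Theorem \ref{thm1}(i). First I would dispose of the trivial case: if $\mathit{w}\text{-}\mathrm{Ls}\{\int \Gamma_n d\mu\}=\emptyset$ the inclusion holds vacuously, so I may fix an arbitrary $x\in \mathit{w}\text{-}\mathrm{Ls}\{\int \Gamma_n d\mu\}$ and aim to produce a Bochner integrable selector of $\mathit{w}\text{-}\mathrm{Ls}\{\Gamma_n\}$ whose integral equals $x$.

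By the definition of the weak upper limit there is a sequence $x_n\in \int \Gamma_n d\mu$ and a subsequence with $x_{n_i}\to x$ weakly in $E$. Since $\int \Gamma_n d\mu=\{\int f d\mu \mid f\in \mathcal{S}^1_{\Gamma_n}\}$, each $x_{n_i}$ is the integral of some Bochner integrable selector $f_{n_i}$ of $\Gamma_{n_i}$; here I use that $E$ is separable, together with integrable boundedness and graph measurability of the $\Gamma_n$, to guarantee that such selectors exist. Let $\tilde{\Gamma}$ be a dominating multifunction for the well-dominated family $\{\Gamma_n\}$. Then $f_{n_i}(t)\in \Gamma_{n_i}(t)\subset \tilde{\Gamma}(t)$ a.e., so after relabelling $\{f_{n_i}\}_{i\in\N}$ as a single sequence it is a well-dominated sequence in $L^1(\mu,E)$ in the sense required by Theorem \ref{ks}, and the multifunction $t\mapsto \mathit{w}\text{-}\mathrm{Ls}\{f_{n_i}(t)\}$ is measurable.

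Now I would invoke the exact Fatou's lemma. Because $x=\mathit{w}\text{-}\lim_i \int f_{n_i}d\mu$, we have $x\in \mathit{w}\text{-}\mathrm{Ls}\{\int f_{n_i}d\mu\}$, and Theorem \ref{ks}(i) gives $x\in \int \mathit{w}\text{-}\mathrm{Ls}\{f_{n_i}\}d\mu$; concretely, there is a Bochner integrable selector $g$ of $t\mapsto \mathit{w}\text{-}\mathrm{Ls}\{f_{n_i}(t)\}$ with $\int g d\mu=x$. To finish, I would pass from selectors back to multifunctions pointwise: if $y\in \mathit{w}\text{-}\mathrm{Ls}\{f_{n_i}(t)\}$, then $y$ is the weak limit of a sub-subsequence $f_{n_{i_k}}(t)\in \Gamma_{n_{i_k}}(t)$, and since $\{n_{i_k}\}$ is a subsequence of $\{n\}$ this exhibits $y\in \mathit{w}\text{-}\mathrm{Ls}\{\Gamma_n(t)\}$. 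Hence $g$ is a selector of $\mathit{w}\text{-}\mathrm{Ls}\{\Gamma_n\}$ and $x=\int g d\mu\in \int \mathit{w}\text{-}\mathrm{Ls}\{\Gamma_n\}d\mu$, which is the desired inclusion.

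The argument is essentially a bookkeeping reduction, so I do not expect a deep obstacle; the two points that require care are (a) checking that the chosen selectors inherit well-dominance from the family $\{\Gamma_n\}$, which is exactly what licenses the application of Theorem \ref{ks}, and (b) the inclusion $\mathit{w}\text{-}\mathrm{Ls}\{f_{n_i}(t)\}\subset \mathit{w}\text{-}\mathrm{Ls}\{\Gamma_n(t)\}$, where one must recall that the upper limit of a subsequence is contained in that of the full sequence, so no index mismatch arises. The genuinely infinite-dimensional ingredient, namely the Lyapunov-type identity $\int \overline{\mathrm{co}}\,\mathit{w}\text{-}\mathrm{Ls}\{f_{n_i}\}d\mu=\int \mathit{w}\text{-}\mathrm{Ls}\{f_{n_i}\}d\mu$ that underlies Theorem \ref{ks} and where saturation is actually consumed, is already absorbed into the function-level result and need not be reproved here.
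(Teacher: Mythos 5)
Your proposal follows essentially the same route as the paper's own proof: dispose of the empty case, extract a weakly convergent subsequence of integrals, represent each as the Bochner integral of a selector $f_{n_i}\in\mathcal{S}^1_{\Gamma_{n_i}}$, apply Theorem \ref{ks} to the (well-dominated) selector sequence, and conclude via the pointwise inclusion $\mathit{w}\text{-}\mathrm{Ls}\{f_{n_i}(t)\}\subset\mathit{w}\text{-}\mathrm{Ls}\{\Gamma_n(t)\}$. The argument is correct; your extra care about well-dominance of the selectors and the subsequence indexing only makes explicit what the paper leaves implicit.
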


\begin{proof}
If $\mathit{w}\text{-}\mathrm{Ls}\,\{ \int\Gamma_n d\mu \}=\emptyset$, then the result is trivially true. Thus, without loss of generality, we may assume that $\mathit{w}\text{-}\mathrm{Ls}\,\{ \int\Gamma_n d\mu \}\ne\emptyset$. Take any $x\in \mathit{w}\text{-}\mathrm{Ls}\,\{ \int \Gamma_nd\mu \}$. Then there is a sequence $\{ x_n \}_{n\in \N}$ in $E$ with $x_n\in \int \Gamma_nd\mu$ for each $n$ such that $x_{n_i}\to x$ weakly. Hence, there is an integrably bounded sequence $\{ f_{n_i} \}_{i\in \N}$ in $L^1(\mu,E)$ such that $x_{n_i}=\int f_{n_i}d\mu$ and $f_{n_i}\in \mathcal{S}^1_{\Gamma_{n_i}}$ for each $i$. It follows from Theorem \ref{ks} that
$$
x=\mathit{w}\text{-}\lim_{i\to \infty}x_{n_i}\in \mathit{w}\text{-}\mathrm{Ls}\left\{ \int f_nd\mu \right\}\subset \int \mathit{w}\text{-}\mathrm{Ls}\left\{ f_n \right\}d\mu\subset \int \mathit{w}\text{-}\mathrm{Ls}\left\{ \Gamma_n \right\}d\mu.
$$
Therefore, the desired inclusion holds.
\end{proof}

Following the terminology of the finite-dimensional case, given a sequence $\{ f_n \}_{n\in \N}$ in $L^1(\mu,E)$ such that $\mathit{w}\text{-}\mathrm{Ls}\,\{ \int f_nd\mu \}$ is nonempty, condition (i) of Theorem \ref{ks} is referred to the \textit{weak Fatou property} and condition (ii) of Theorem \ref{ks} is referred to the \textit{weak upper closure property}. Similar to the function case, given a sequence of multifunctions $\{ \Gamma_n \}_{n\in \N}$ in $\M^1(\mu,E)$ such that $\mathit{w}\text{-}\mathrm{Ls}\,\{ \int \Gamma_nd\mu \}$ is nonempty, the inclusion of Corollary \ref{cor1} is referred to the \textit{weak Fatou property}. 

We now turn to the saturation property of nonatomic finite measure spaces, and show that it can be completely characterized by the weak Fatou property and the weak closure property and formalized as follows. 

\begin{prop}[\citet{ks14a}]
\label{eqv1}
Let $(T,\Sigma,\mu)$ be a nonatomic finite measure space and $E$ be an infinite-\hspace{0pt}dimensional Banach space. Then the following conditions are equivalent.
\begin{enumerate}[\rm (i)]
\item $(T,\Sigma,\mu)$ has the saturation property.
\item Every well-\hspace{0pt}dominated sequence of functions in $L^1(\mu,E)$ has the weak Fatou property.
\item Every well-\hspace{0pt}dominated sequence of functions in $L^1(\mu,E)$ has the weak upper closure property.
\item Every well-\hspace{0pt}dominated sequence of multifunctions functions in $\M^1(\mu,E)$ has the weak Fatou property.
\end{enumerate}
\end{prop}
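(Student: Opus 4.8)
The plan is to treat the implications out of (i) as already in hand, to show that the three Fatou-type properties (ii), (iii), (iv) are mutually equivalent by soft arguments that never invoke saturation, and then to isolate the single substantive converse, namely that any one of these properties forces saturation. Concretely, $(\mathrm{i})\Rightarrow(\mathrm{ii})$, $(\mathrm{i})\Rightarrow(\mathrm{iii})$, and $(\mathrm{i})\Rightarrow(\mathrm{iv})$ are precisely Theorem \ref{ks}(i), Theorem \ref{ks}(ii), and Corollary \ref{cor1}, so nothing new is required in the forward direction. It therefore suffices to close a cycle by proving the equivalence of the Fatou-type conditions and that one of them returns saturation.

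For the equivalences among (ii), (iii), (iv) I would proceed without any appeal to saturation. For $(\mathrm{iii})\Rightarrow(\mathrm{ii})$: given a well-dominated sequence $\{f_n\}$ and any $x\in\mathit{w}\text{-}\mathrm{Ls}\{\int f_nd\mu\}$, pass to a subsequence along which the integrals converge weakly to $x$; this subsequence is itself well-dominated, so the weak upper closure property yields a selector $f$ of $\mathit{w}\text{-}\mathrm{Ls}\{f_n\}$ with $\int fd\mu=x$, exactly as in the derivation of condition (i) from condition (ii) inside the proof of Theorem \ref{ks}. For $(\mathrm{ii})\Rightarrow(\mathrm{iii})$: a well-dominated sequence is relatively weakly compact in $L^1(\mu,E)$ by Theorem \ref{diest}, and since integration is weak-to-weak continuous, $\mathit{w}\text{-}\mathrm{Ls}\{\int f_nd\mu\}$ is nonempty; picking any point in it and applying the weak Fatou inclusion realizes it as $\int fd\mu$ for a selector $f$ of $\mathit{w}\text{-}\mathrm{Ls}\{f_n\}$, which is the weak upper closure property. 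For $(\mathrm{ii})\Leftrightarrow(\mathrm{iv})$: the implication $(\mathrm{ii})\Rightarrow(\mathrm{iv})$ repeats the argument of Corollary \ref{cor1} verbatim, with the hypothesized function-level weak Fatou property applied to integrable selectors of $\Gamma_n$ in place of Theorem \ref{ks}; conversely, viewing a well-dominated sequence of functions $\{f_n\}$ as the singleton-valued multifunctions $\Gamma_n(t)=\{f_n(t)\}$ collapses the multifunction property to the function property, since then $\int\Gamma_nd\mu=\{\int f_nd\mu\}$ and $\mathit{w}\text{-}\mathrm{Ls}\{\Gamma_n(t)\}=\mathit{w}\text{-}\mathrm{Ls}\{f_n(t)\}$.

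The real work lies in the converse $(\mathrm{ii})\Rightarrow(\mathrm{i})$, which together with the equivalences above also delivers $(\mathrm{iii})\Rightarrow(\mathrm{i})$ and $(\mathrm{iv})\Rightarrow(\mathrm{i})$. Arguing by contraposition, suppose $(T,\Sigma,\mu)$ is nonatomic but not saturated. By the characterization of saturation recalled earlier, there is some $S\in\Sigma$ with $\mu(S)>0$ for which $(S,\Sigma_S,\mu)$ is essentially countably generated; as a restriction of a complete nonatomic finite measure it is itself complete, nonatomic, and finite. Proposition \ref{rust1}, applied to $(S,\Sigma_S,\mu)$ and the infinite-dimensional space $E$, then furnishes a well-dominated sequence $\{g_n\}$ of $E$-valued Bochner integrable functions on $S$ with $\mathrm{Ls}\{\int g_nd\mu\}\not\subset\int\mathit{w}\text{-}\mathrm{Ls}\{g_n\}d\mu$, and hence $\mathit{w}\text{-}\mathrm{Ls}\{\int g_nd\mu\}\not\subset\int\mathit{w}\text{-}\mathrm{Ls}\{g_n\}d\mu$ since $\mathrm{Ls}\subset\mathit{w}\text{-}\mathrm{Ls}$. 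I would extend each $g_n$ by zero, setting $f_n=g_n$ on $S$ and $f_n=0$ on $T\setminus S$; extending the dominating multifunction by the value $\{0\}$ off $S$ shows $\{f_n\}$ is well-dominated on $T$.

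The main obstacle is the localization bookkeeping that transfers the failure on $S$ to a failure on $T$. One must check that $\int_T f_nd\mu=\int_S g_nd\mu$ for every $n$, so that $\mathit{w}\text{-}\mathrm{Ls}\{\int_T f_nd\mu\}=\mathit{w}\text{-}\mathrm{Ls}\{\int_S g_nd\mu\}$, and that every Bochner integrable selector of $t\mapsto\mathit{w}\text{-}\mathrm{Ls}\{f_n(t)\}$ is forced to vanish a.e.\ on $T\setminus S$ (since there $\mathit{w}\text{-}\mathrm{Ls}\{f_n(t)\}=\{0\}$) while restricting on $S$ to a selector of $t\mapsto\mathit{w}\text{-}\mathrm{Ls}\{g_n(t)\}$, and conversely, giving $\int_T\mathit{w}\text{-}\mathrm{Ls}\{f_n\}d\mu=\int_S\mathit{w}\text{-}\mathrm{Ls}\{g_n\}d\mu$. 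Granting these two identities, the non-inclusion on $S$ is exactly the non-inclusion $\mathit{w}\text{-}\mathrm{Ls}\{\int_T f_nd\mu\}\not\subset\int_T\mathit{w}\text{-}\mathrm{Ls}\{f_n\}d\mu$, so $\{f_n\}$ is a well-dominated sequence on $T$ without the weak Fatou property, contradicting (ii). The only delicate point is thus this reduction from an arbitrary non-saturated nonatomic space to the essentially countably generated setting where Proposition \ref{rust1} is available; the extension-by-zero transfer is routine once the two localization identities are verified.
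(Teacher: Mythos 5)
Your proposal is correct and follows essentially the same route as the paper: the forward implications are delegated to Theorem \ref{ks} and Corollary \ref{cor1}, the soft passages among the Fatou-type conditions are the same arguments embedded in those proofs, and the substantive converse is the identical contrapositive construction via Proposition \ref{rust1} on an essentially countably generated subset $S$ followed by extension by zero. The only difference is organizational (you close the cycle at (ii) $\Rightarrow$ (i) and prove a couple of redundant soft implications, whereas the paper runs the single chain (i) $\Rightarrow$ (iii) $\Rightarrow$ (ii) $\Rightarrow$ (iv) $\Rightarrow$ (i)), and your explicit attention to the localization identities is a harmless elaboration of what the paper leaves implicit.
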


\begin{proof}
The implication (i) $\Rightarrow$ (iii) $\Rightarrow$ (ii) $\Rightarrow$ (iv) is already shown in the proof of Theorem \ref{ks} and Corollary \ref{cor1}. We show the implication (iv) $\Rightarrow$ (i). Suppose that a nonatomic finite measure space $(T,\Sigma,\mu)$ is not saturated. Then there exists $S\in \Sigma$ with $\mu(S)>0$ such that $(S,\Sigma_S,\mu)$ is countably generated. Since $\mu$ is nonatomic, it follows from Proposition \ref{rust1} that there exists a well-\hspace{0pt}dominated sequence of Bochner integrable functions $\{ f_n \}_{n\in \N}$ from $S$ to $E$ such that the weak Fatou property fails to hold for $(S,\Sigma_S,\mu)$. Let $\Gamma:S\twoheadrightarrow E$ be a dominating multifunction for $\{ f_n \}_{n\in \N}$. Extend the functions to $T$ by $\tilde{f}_n(t)=f_n(t)$ if $t\in S$ and $\tilde{f}_n(t)=\{ 0 \}$ otherwise, and similarly $\tilde{\Gamma}(t)=\Gamma(t)$ if $t\in S$ and $\tilde{\Gamma}(t)=\{ 0 \}$ otherwise. By construction, the well-\hspace{0pt}dominated sequence of Bochner integrable functions $\{ \tilde{f}_n \}_{n\in \N}$ fails to satisfy the weak Fatou property for $(T,\Sigma,\mu)$.
\end{proof}

\section{Fatou's Lemma Based on Gelfand Integration}
We demonstrate in this section how the prototype Fatou's lemma in finite dimensions is extended to the dual space of a separable Banach space with Gelfand integrals. As in Section \ref{fat2}, we first show an approximate version of Fatou's lemma under nonatomicity and then derive an exact version of Fatou's lemma under the saturation hypothesis. Finally, we derive the necessity of saturation for Fatou's lemma. Although the proofs need an independent treatment, their method is parallel to the one developed in Section \ref{fat2} with a suitable replacement of the weak topology and Bochner integrals by the weak$^* \!$ topology and Gelfand integrals respectively.

\subsection{Approximate Fatou's Lemma}
The \textit{strong upper limit} of a sequence $\{ F_n \}_{n\in \N}$ of subsets in $E^*$ is defined by
$$
\mathrm{Ls}\left\{ F_n \right\}=\left\{ x^*\in E^* \mid \exists \left\{ x^*_{n_i} \right\}_{i\in \N}: x^*=\lim_{i\to \infty}x^*_{n_i},\,x^*_{n_i}\in F_{n_i}\,\forall i\in \mathbb{N} \right\},
$$
where $\{ x^*_{n_i} \}_{i\in \N}$ denotes a subsequence of $\{ x^*_n \}_{n\in \N}\subset E^*$ and the convergence is with respect to the dual norm in $E^*$. We denote by $\mathit{w}^*\text{-}\lim_{n}x_n$ the weak$^*\!$ limit point of a sequence $\{ x^*_n \}$ in $E^*$. The \textit{weak$^*\!$ upper limit} of $\{ F_n \}_{n\in \N}$ is defined by
$$
\mathit{w}^*\text{-}\mathrm{Ls}\left\{ F_n \right\}=\left\{ x^*\in E^* \mid \exists \{ x^*_{n_i} \}_{i\in \N}: x^*=\mathit{w}^*\text{-}\lim_{i\to \infty}x_{n_i}^*,\,x^*_{n_i}\in F_{n_i}\,\forall i\in \mathbb{N} \right\}.
$$

If $\{ f_n \}_{n\in \N}$ is an integrably bounded sequence in $G^1(\mu,E^*)$, then the multifunction from $T$ into $E^*$ defined by $t\mapsto \mathit{w}^*\text{-}\mathrm{Ls}\,\{ f_n(t) \}$ is measurable whenever $E$ is separable (see \cite[Lemma 4.5]{bs05}). Furthermore, the multifunction $t\mapsto\overline{\mathrm{co}}^{\,\mathit{w}^*}\mathit{w}^*\text{-}\mathrm{Ls}\,\{ f_n(t) \}$ is measurable as noted in Subsection \ref{gel}. 

A Gelfand integral analogue of Theorem \ref{km1} is the following result.\footnote{As shown in \cite{ba02,cs09}, the integrable boundedness condition in Theorem \ref{cm2} and Lemma \ref{lem3} can be extended to the uniform integrability condition.}

\begin{thm}[\citet{cm02}]
\label{cm2}
Let $(T,\Sigma,\mu)$ be a complete finite measure space and $E$ be a separable Banach space. If $\{ f_n \}_{n\in \N}$ is an integrably bounded sequence in $G^1(\mu,E^*)$, then for every $\varepsilon>0$ there exists $f\in G^1(\mu,E^*)$ with the following properties. 
\begin{enumerate}[\rm (i)]
\item $f(t)\in \mathit{w}^*\text{-}\mathrm{Ls}\left\{ f_n(t) \right\}$ a.e.\ $t\in T$;
\item $\displaystyle\mathit{w}^*\text{-}\int fd\mu \in \mathit{w}^*\text{-}\mathrm{Ls}\left\{ \mathit{w}^*\text{-}\int f_n d\mu \right\}+\varepsilon B^*$.
\end{enumerate}
\end{thm}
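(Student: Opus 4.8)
The plan is to transcribe the proof of Theorem \ref{ks} into the Gelfand register, replacing the weak topology by the $\sigma(G^1_{E^*},L^\infty\otimes E)$-topology and the Bochner integral by the Gelfand integral, and then to substitute the single appeal to saturation (Proposition \ref{lyp}(v)) by an $\varepsilon$-approximate de-convexification. It is precisely this last replacement that manufactures the ball $\varepsilon B^*$ in conclusion (ii).

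First I would note that integrable boundedness, $f_n(t)\in\varphi(t)B^*$ with $\varphi\in L^1(\mu)$, forces uniform integrability through the absolute continuity of $A\mapsto\int_A\varphi\,d\mu$. Theorem \ref{cm1} then yields a subsequence $\{f_{n_i}\}$ converging to some $f_0\in G^1(\mu,E^*)$ in the $\sigma(G^1_{E^*},L^\infty\otimes E)$-topology. Testing this convergence against the tensor $1\otimes x$ for each $x\in E$ gives $\langle x,w^*\text{-}\int f_{n_i}\,d\mu\rangle=\int\langle x,f_{n_i}\rangle\,d\mu\to\int\langle x,f_0\rangle\,d\mu=\langle x,w^*\text{-}\int f_0\,d\mu\rangle$, so $w^*\text{-}\int f_{n_i}\,d\mu\to w^*\text{-}\int f_0\,d\mu$ weakly$^*$ in $E^*$ and hence $I_0:=w^*\text{-}\int f_0\,d\mu\in w^*\text{-}\mathrm{Ls}\{w^*\text{-}\int f_n\,d\mu\}$. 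Next I would establish the Gelfand counterpart of Lemma \ref{lem2}---the weak$^*$ scalar form of the convexity/upper-limit lemma, available because for each fixed $x\in E$ the scalars $\langle x,f_{n_i}(\cdot)\rangle$ form a weakly convergent sequence in $L^1(\mu)$---to conclude that $f_0(t)\in\overline{\mathrm{co}}^{\,w^*}\Phi(t)$ a.e., where $\Phi(t):=w^*\text{-}\mathrm{Ls}\{f_n(t)\}$ is measurable and, by integrable boundedness, weak$^*$ compact valued.

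Were the space saturated, Proposition \ref{lyp}(v) would give $w^*\text{-}\int\overline{\mathrm{co}}^{\,w^*}\Phi\,d\mu=w^*\text{-}\int\Phi\,d\mu$ and a genuine selector of $\Phi$ with integral exactly $I_0$ would exist. Without saturation the decisive remaining task is to produce a selector $f$ of $\Phi$ with $\|w^*\text{-}\int f\,d\mu-I_0\|<\varepsilon$; conclusion (i) then holds by construction. I would split $T$ into its (at most countably many) atoms and its nonatomic part $T_{na}$. On each atom $f_0$ is constant and is a genuine $\sigma(G^1_{E^*},L^\infty\otimes E)$-limit of the constants $f_{n_i}$, so $f_0$ already selects $\Phi$ there and I keep $f=f_0$, contributing its Gelfand integral exactly. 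On $T_{na}$ I would de-convexify $f_0$ by a Lyapunov/bang--bang partition: using measurable selection write $f_0$ as a weak$^*$ limit of finite convex combinations $\sum_k\lambda_k(t)g_k(t)$ with $g_k(t)\in\Phi(t)$, subdivide $T_{na}$ finely, and choose switching sets on the pieces so that the resulting selector $f$ reproduces the Gelfand integral of $f_0$ up to a small error.

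The hard part is controlling this last step in the \emph{dual norm}, so as to land inside $\varepsilon B^*$ rather than merely inside a weak$^*$ neighbourhood. In the Bochner proof of Theorem \ref{km1} such norm control is cheap: a Bochner integrable selector is $\|\cdot\|_1$-approximable by simple functions, hence nearly constant on a fine partition, so the partition/oscillation estimate closes in norm. A Gelfand integrable selector admits no such simple-function approximation---only each scalar $\langle x,f_0(\cdot)\rangle$ is a bona fide $L^1(\mu)$ function---so the naive partition argument controls only finitely many directions $x_1,\dots,x_m\in E$ and yields a weak$^*$, not a norm, estimate. The crux, and where I would concentrate the effort, is to upgrade this to dual-norm control of $\sup_{x\in B}|\langle x,w^*\text{-}\int_{T_{na}}(f-f_0)\,d\mu\rangle|$, leaning on the separability of the predual $E$ together with the uniform integrability of $\{f_n\}$. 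This is exactly the Gelfand-specific ingredient that has no Bochner analogue and for which an independent treatment (as in \cite{cm02}) is required.
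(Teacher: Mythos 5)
The paper does not actually prove Theorem \ref{cm2}; it is quoted from \citet{cm02} without proof, so there is no in-text argument to compare yours against. Judged on its own terms, your outline has the right architecture (uniform integrability from integrable boundedness, Theorem \ref{cm1} to extract a $\sigma(G^1_{E^*},L^\infty\otimes E)$-convergent subsequence, testing against $1\otimes x$ to place $I_0=\mathit{w}^*\text{-}\int f_0d\mu$ in the upper limit of the integrals, Lemma \ref{lem3} to place $f_0(t)$ in $\overline{\mathrm{co}}^{\,\mathit{w}^*}\Phi(t)$, and an atomic/nonatomic split), and you correctly diagnose that the whole content of the $\varepsilon B^*$ term is dual-\emph{norm} control of the de-convexification. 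But you then leave precisely that step unproved, deferring to ``an independent treatment (as in \cite{cm02})''. That is the theorem, not a lemma on the way to it; as written the proposal is a correct skeleton with the decisive bone missing.

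The missing ingredient is the approximate Lyapunov convexity theorem (Uhl): for a nonatomic vector measure of bounded variation with values in \emph{any} Banach space, the norm closure of the range is convex. Because $\{f_n\}$ is integrably bounded by $\varphi\in L^1(\mu)$, every Gelfand integrable selector $g$ of $\Phi$ induces an $E^*$-valued measure $A\mapsto \mathit{w}^*\text{-}\int_A g\,d\mu$ dominated in dual norm by $\int_A\varphi\,d\mu$, hence of bounded variation; applying Uhl's theorem to the differences of finitely many such selectors on the nonatomic part yields the inclusion
$$
\mathit{w}^*\text{-}\int\overline{\mathrm{co}}^{\,\mathit{w}^*}\Phi\,d\mu\ \subset\ \overline{\mathit{w}^*\text{-}\int\Phi\,d\mu}^{\,\|\cdot\|},
$$
where the closure is taken in the dual norm of $E^*$. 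Since $I_0$ lies in the left-hand side and $\mathcal{S}^{1,\mathit{w}^*}_\Phi\ne\emptyset$ (graph measurability of $\Phi$ plus integrable boundedness plus the measurable selection theorem in the Suslin space $(E^*,\mathit{w}^*)$), this hands you a selector $f$ of $\Phi$ with $\|\mathit{w}^*\text{-}\int f\,d\mu-I_0\|<\varepsilon$, which is exactly conclusion (ii). Note that this route makes your worry about simple-function approximation of $f_0$ moot: one never norm-approximates $f_0$ itself, only the finite-dimensional ``switching'' measures, and those are norm-controlled by the scalar dominant $\varphi$. Supplying this lemma (or an explicit citation to it) is what separates your sketch from a proof.
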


Similarly, a Gelfand integral analogue of Lemma \ref{lem2} is the following result.

\begin{lem}[\citet{cm02}]
\label{lem3}
\label{km}Let $(T,\Sigma,\mu)$ be a finite measure space and $E$ be a separable Banach space. If $\{ f_n \}_{n\in \N}$ is an integrably bounded sequence in $G^1(\mu,E^*)$ such that $f_n\to f$ in the $\sigma(G^1_{E^*},L^\infty\otimes E)$-\hspace{0pt}topology, then: 
$$
f(t)\in \overline{\mathrm{co}}^{\,\mathit{w}^*}\mathit{w}^*\text{-}\mathrm{Ls}\left\{ f_n(t) \right \}\quad \text{a.e.\ $t\in T$}.
$$
\end{lem}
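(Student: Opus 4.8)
The plan is to reduce the vector statement to a scalar one through the dual pairing $\langle E,E^*\rangle$, exploiting the fact that a weak$^*\!$ closed convex set is cut out by its support function. Recall that for a norm-bounded set $A\subset E^*$ one has $x^*\in \overline{\mathrm{co}}^{\,\mathit{w}^*}A$ if and only if $\langle x,x^*\rangle\le s(x,A)$ for every $x\in E$, since a weak$^*\!$ closed convex set is the intersection of the weak$^*\!$ closed half-spaces containing it and the support function is unchanged on passing to $\overline{\mathrm{co}}^{\,\mathit{w}^*}A$. Thus it suffices to produce a single null set off which $\langle x,f(t)\rangle\le s(x,\mathit{w}^*\text{-}\mathrm{Ls}\{f_n(t)\})$ for all $x\in E$.

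First I would fix $x\in E$ and pass to scalars. Let $\varphi\in L^1(\mu)$ be an integrable bound, so $\|f_n(t)\|\le \varphi(t)$ a.e. Writing $g_n^x=\langle x,f_n(\cdot)\rangle$ and $g^x=\langle x,f(\cdot)\rangle$, each $g_n^x\in L^1(\mu)$ is dominated by $\|x\|\varphi$, so $\{g_n^x\}_{n\in\N}$ is uniformly integrable; and by the description of $\sigma(G^1_{E^*},L^\infty\otimes E)$-convergence recorded in Subsection \ref{gel}, $g_n^x\to g^x$ weakly in $L^1(\mu)$. Applying the finite-dimensional Lemma \ref{lem1} with $k=1$ gives $g^x(t)\in \mathrm{co}\,\mathrm{Ls}\{g_n^x(t)\}=[\liminf_n g_n^x(t),\,\limsup_n g_n^x(t)]$ a.e., and in particular $\langle x,f(t)\rangle\le \limsup_n\langle x,f_n(t)\rangle$ a.e., the null set depending on $x$.

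Next I would identify this scalar limsup with the support function of the weak$^*\!$ upper limit. For a.e.\ $t$ the sequence $\{f_n(t)\}_{n\in\N}$ is norm-bounded by $\varphi(t)$; since $E$ is separable, norm-bounded subsets of $E^*$ are weak$^*\!$ sequentially compact, so from a subsequence realizing $\limsup_n\langle x,f_n(t)\rangle$ one extracts a further subsequence converging weak$^*\!$ to some $x^*_t\in \mathit{w}^*\text{-}\mathrm{Ls}\{f_n(t)\}$, whence $\limsup_n\langle x,f_n(t)\rangle=\langle x,x^*_t\rangle\le s(x,\mathit{w}^*\text{-}\mathrm{Ls}\{f_n(t)\})$. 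Combined with the previous step, $\langle x,f(t)\rangle\le s(x,\mathit{w}^*\text{-}\mathrm{Ls}\{f_n(t)\})$ holds a.e., off a null set $N_x$.

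Finally I would remove the dependence of the null set on $x$. Choosing a countable dense set $\{x_k\}_{k\in\N}\subset E$ and setting $N=\bigcup_k N_{x_k}$, the inequality holds for every $x_k$ and every $t\notin N$. For fixed $t\notin N$ both sides are continuous in $x$: the left side is a bounded linear functional, and the right side is sublinear and Lipschitz with constant $\varphi(t)$, since $\mathit{w}^*\text{-}\mathrm{Ls}\{f_n(t)\}$ is contained in $\{x^*\in E^*\mid \|x^*\|\le\varphi(t)\}$ by weak$^*\!$ lower semicontinuity of the norm. Hence the inequality extends by density to all $x\in E$, and the support-function characterization yields $f(t)\in \overline{\mathrm{co}}^{\,\mathit{w}^*}\mathit{w}^*\text{-}\mathrm{Ls}\{f_n(t)\}$ for every $t\notin N$. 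The one genuinely delicate point is exactly this interchange of quantifiers: separability of $E$ is what upgrades a countable family of almost-everywhere scalar inequalities into a single almost-everywhere statement valid simultaneously in all directions $x$, and it is also what supplies the weak$^*\!$ sequential compactness needed to reach the support function.
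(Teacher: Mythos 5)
Your argument is correct. Note that the paper does not actually prove Lemma \ref{lem3}: it is quoted from Cornet--Medecin (2002) as the Gelfand analogue of Lemma \ref{lem2}, so there is no in-text proof to compare against. Your route --- scalarize along each $x\in E$ using the characterization of $\sigma(G^1_{E^*},L^\infty\otimes E)$-convergence from Subsection \ref{gel}, invoke the one-dimensional case of Lemma \ref{lem1} to get $\langle x,f(t)\rangle\le \limsup_n\langle x,f_n(t)\rangle$ a.e., bound the $\limsup$ by $s(x,\mathit{w}^*\text{-}\mathrm{Ls}\{f_n(t)\})$ via weak$^*\!$ sequential compactness of norm-bounded sets in the dual of a separable space, and then pass from a countable dense set of directions to all of $E$ by the Lipschitz estimates on both sides --- is exactly the standard scalarization-plus-support-function strategy behind the cited result. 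The two points that genuinely need care are both handled: the nonemptiness of $\mathit{w}^*\text{-}\mathrm{Ls}\{f_n(t)\}$ a.e.\ (needed for the support-function characterization of $\overline{\mathrm{co}}^{\,\mathit{w}^*}$ to be non-vacuous) follows from the pointwise norm bound $\varphi(t)$, and the interchange of the ``for all $x$'' and ``for a.e.\ $t$'' quantifiers is correctly resolved by separability together with the $\varphi(t)$-Lipschitz continuity of $x\mapsto s(x,\mathit{w}^*\text{-}\mathrm{Ls}\{f_n(t)\})$. I see no gap.
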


Similar to Proposition \ref{rust1}, nonatomicity is insufficient to remove the $\varepsilon$-\hspace{0pt}approximation (or the closure operation) from Theorem \ref{cm2}. Specifically, the following negative result holds. 

\begin{prop}[\citet{kss16,ru89}]
\label{rust2}
For every nonatomic finite measure space $(T,\Sigma,\mu)$ that is not saturated and for every infinite-\hspace{0pt}dimensional Banach space $E$, there exists an integrably bounded sequence $\{ f_n \}_{n\in \N}$ in $G^1(\mu,E^*)$ with the following properties.
\begin{enumerate}[\rm (i)]
\item $\displaystyle\mathrm{Ls}\left\{ \mathit{w}^*\text{-}\int f_nd\mu \right\}\not\subset \mathit{w}^*\text{-}\int \mathit{w}^*\text{-}\mathrm{Ls}\left\{ f_n  \right\}d\mu$.
\item There exists no $f\in G^1(\mu,E^*)$ such that
\begin{enumerate}[\rm (a)]
\item $f(t)\in \mathit{w}^*\text{-}\mathrm{Ls}\left\{ f_n(t) \right\}$ a.e.\ $t\in T$;
\item $\displaystyle\mathit{w}^*\text{-}\int fd\mu\in \mathit{w}^*\text{-}\mathrm{Ls}\left\{ \mathit{w}^*\text{-}\int f_nd\mu \right\}$.
\end{enumerate}
\end{enumerate}
\end{prop}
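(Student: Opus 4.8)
The plan is to follow the template of Proposition \ref{rust1}, together with the localization argument used in the proof of Proposition \ref{eqv1}, replacing the weak topology and Bochner integrals throughout by the weak$^*\!$ topology and Gelfand integrals. First I would reduce to the essentially countably generated case. Since $(T,\Sigma,\mu)$ is nonatomic but not saturated, the characterization of saturation recalled above furnishes $S\in\Sigma$ with $\mu(S)>0$ for which $(S,\Sigma_S,\mu)$ is essentially countably generated; being nonatomic and countably generated, it is, after normalization, isomorphic mod null sets to the Lebesgue unit interval by Carath\'eodory's isomorphism theorem. I would construct the sequence on $S$ and extend each $f_n$ by $0$ on $T\setminus S$, observing that on $T\setminus S$ every upper limit and every Gelfand integral contributes only $\{0\}$, so that a failure on $S$ persists verbatim on $T$.

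The heart of the argument is an $E^*$-valued vector measure on $S$ whose range is \emph{not} norm closed. Because $E$ is infinite-dimensional, a normalized weak$^*\!$-null sequence in $E^*$ is available (Josefson--Nissenzweig); combining it with suitably oscillating scalar functions on the Lebesgue interval --- this is Rust's $l^2$ construction in \cite{ru89}, extended to an arbitrary infinite-dimensional $E$ in \cite{kss16} --- I would produce a Gelfand integrable $g:S\to E^*$ together with a point $x^*\in E^*$ and sets $A_n\in\Sigma_S$ such that the measure $m(A)=\mathit{w}^*\text{-}\int_A g\,d\mu$ satisfies $m(A_n)\to x^*$ in the dual norm while $x^*\notin m(\Sigma_S)$. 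Conceptually such an $x^*$ must exist: non-saturation forces the range $m(\Sigma_S)$ to fail to be convex (this is exactly the failure of Proposition \ref{lyp}(v) for the two-point multifunction $t\mapsto\{0,g(t)\}$), whereas the norm closure of the range of any nonatomic vector measure \emph{is} convex \cite{du77}, so the range is strictly smaller than its norm closure.

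I would then set $f_n=\chi_{A_n}g$. Each $f_n$ is Gelfand integrable and integrably bounded, being dominated by $\|g(\cdot)\|\in L^1(\mu)$, and $\mathit{w}^*\text{-}\int f_n\,d\mu=m(A_n)\to x^*$ in norm; hence $x^*\in\mathrm{Ls}\{\mathit{w}^*\text{-}\int f_n\,d\mu\}$ and, the sequence being norm- and therefore weak$^*\!$-convergent, $\mathit{w}^*\text{-}\mathrm{Ls}\{\mathit{w}^*\text{-}\int f_n\,d\mu\}=\{x^*\}$. Pointwise $f_n(t)\in\{0,g(t)\}$, a two-point hence weak$^*\!$-closed set, so $\mathit{w}^*\text{-}\mathrm{Ls}\{f_n(t)\}\subset\{0,g(t)\}$ a.e.; using separability of $E$ to split the relevant set measurably, every selector of this upper-limit multifunction equals $\chi_B g$ for some $B\in\Sigma_S$, whence $\mathit{w}^*\text{-}\int\mathit{w}^*\text{-}\mathrm{Ls}\{f_n\}\,d\mu\subset m(\Sigma_S)$. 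Since $x^*\notin m(\Sigma_S)$, the point $x^*$ lies in the left-hand set of (i) but not the right-hand set, establishing (i). Part (ii) follows from the same bookkeeping: any $f$ obeying (a) satisfies $f(t)\in\{0,g(t)\}$ a.e., so $\mathit{w}^*\text{-}\int f\,d\mu\in m(\Sigma_S)$, while (b) forces $\mathit{w}^*\text{-}\int f\,d\mu\in\mathit{w}^*\text{-}\mathrm{Ls}\{\mathit{w}^*\text{-}\int f_n\,d\mu\}=\{x^*\}$, and these are incompatible.

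The hard part will be the core construction of the second paragraph: producing, for an \emph{arbitrary} infinite-dimensional $E$ and on the prescribed countably generated nonatomic piece, a nonatomic $E^*$-valued vector measure whose range fails to be norm closed, with an explicitly identified missing norm-limit point $x^*$. It is here that Rust's $l^2$ example and its extension in \cite{kss16} do the real work, and where I would need to verify two things simultaneously --- that $m(A_n)$ converges to $x^*$ in \emph{norm} (required for the strong upper limit appearing in (i)), and that the pointwise weak$^*\!$ upper limit collapses exactly onto the two-point set $\{0,g(t)\}$, so that the right-hand Gelfand integral stays trapped inside the range $m(\Sigma_S)$ and cannot recapture $x^*$.
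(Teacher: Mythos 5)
The paper itself supplies no proof of Proposition \ref{rust2}: like Proposition \ref{rust1}, it is stated with a pointer to \cite{kss16,ru89}, so there is no in-text argument to compare against. Your outline correctly reconstructs the strategy of those sources --- localize to an essentially countably generated piece $S$, exploit the failure of the Lyapunov convexity theorem there to obtain a Gelfand integrable density $g$ and a point $x^*$ that is a norm limit of $m(A_n)$, $m(A)=\mathit{w}^*\text{-}\int_A g\,d\mu$, yet lies outside $m(\Sigma_S)$; set $f_n=\chi_{A_n}g$; and note that $\mathit{w}^*\text{-}\mathrm{Ls}\{f_n(t)\}\subset\{0,g(t)\}$ forces every selector to integrate back into $m(\Sigma_S)$, while norm convergence of the integrals makes $\mathit{w}^*\text{-}\mathrm{Ls}\{\mathit{w}^*\text{-}\int f_n d\mu\}=\{x^*\}$. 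The bookkeeping for (i), (ii) and the extension by zero to $T\setminus S$ is sound. One caveat on your heuristic for why $x^*$ exists: the convexity result in \cite{du77} concerns the \emph{weak} closure of the range of a bounded nonatomic vector measure; convexity of the \emph{norm} closure requires extra structure (e.g., relatively norm compact range), which a general Gelfand indefinite integral need not have. What the construction in \cite{kss16} actually arranges is a countably valued $g$ built from a Josefson--Nissenzweig sequence, for which $m$ is norm-approximable by measures with simple densities; then the set $\{\mathit{w}^*\text{-}\int\alpha g\,d\mu:\ 0\le\alpha\le 1\ \text{measurable}\}$, which strictly contains $m(\Sigma_S)$ precisely because saturation fails, sits inside the norm closure of $m(\Sigma_S)$, yielding the required $x^*$ and sets $A_n$. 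Since you explicitly flag this core construction as the deferred ``hard part,'' your proposal is a faithful scheme of the cited proof rather than a divergent route.
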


The similar remark with Proposition \ref{rust1} is valid here: Since the inclusion $\mathrm{Ls}\,\{ \int f_nd\mu \}\subset \mathit{w}^*\text{-}\mathrm{Ls}\,\{ \mathit{w}^*\text{-}\int f_nd\mu \}$ is automatic, this counterexample is stronger than the negation: $\mathit{w}^*\text{-}\mathrm{Ls}\,\{ \mathit{w}^*\text{-}\int f_nd\mu \}\not\subset \mathit{w}^*\text{-}\int \mathit{w}^*\text{-}\mathrm{Ls}\,\{ f_n \}d\mu$.

\subsection{Exact Fatou's Lemma}
\begin{thm}[\citet{kss16}]
\label{kss}
Let $(T,\Sigma,\mu)$ be a saturated finite measure space and $E$ be a separable Banach space. If $\{ f_n \}_{n\in \N}$ is an integrably bounded sequence in $G^1(\mu,E^*)$, then:
\begin{enumerate}[\rm (i)]
\item $\mathit{w}^*\text{-}\mathrm{Ls}\left\{ \displaystyle\mathit{w}^*\text{-}\int f_nd\mu \right\}\subset \displaystyle\mathit{w}^*\text{-}\int\mathit{w}^*\text{-}\mathrm{Ls}\left\{ f_n \right\}d\mu$.
\item There exists $f\in G^1(\mu,E^*)$ such that
\begin{enumerate}[\rm(a)]
\item $f(t)\in \mathit{w}^*\text{-}\mathrm{Ls}\left\{ f_n(t) \right\}$ a.e.\ $t\in T$;
\item $\displaystyle\mathit{w}^*\text{-}\int fd\mu \in \mathit{w}^*\text{-}\mathrm{Ls}\left\{ \mathit{w}^*\text{-}\int f_n d\mu \right\}$.
\end{enumerate}
\end{enumerate}
\end{thm}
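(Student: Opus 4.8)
The plan is to mirror the Bochner-integral argument behind Theorem~\ref{ks}, systematically replacing the weak topology and the Dunford--Pettis-type compactness of Theorem~\ref{diest} by the $\sigma(G^1_{E^*},L^\infty\otimes E)$-topology and the sequential compactness of Theorem~\ref{cm1}, and substituting for Lemma~\ref{lem2} and Proposition~\ref{lyp}(iv) their weak$^*$ counterparts Lemma~\ref{lem3} and Proposition~\ref{lyp}(v). First I would record two structural facts about the limit multifunction $\Phi(t):=\mathit{w}^*\text{-}\mathrm{Ls}\{f_n(t)\}$ on which everything rests. Since $\{f_n\}$ is integrably bounded, there is $\varphi\in L^1(\mu)$ with $f_n(t)\in\varphi(t)B^*$ a.e.; because $E$ is separable, the weak$^*$ topology is metrizable on each ball $\varphi(t)\overline{B^*}$, which is weak$^*$ compact by Banach--Alaoglu, so $\Phi$ takes nonempty, weak$^*$ compact values inside $\varphi(t)\overline{B^*}$ and is integrably bounded. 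Its measurability, already noted in the subsection via \cite[Lemma 4.5]{bs05}, together with weak$^*$ compact values yields graph measurability and, by the selection theory recorded in Subsection~\ref{gel}, the existence of Gelfand integrable selectors.

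For part (ii), integrable boundedness by the fixed $\varphi\in L^1(\mu)$ gives uniform integrability, so Theorem~\ref{cm1} furnishes a subsequence $\{f_{n_i}\}$ converging to some $f_0\in G^1(\mu,E^*)$ in the $\sigma(G^1_{E^*},L^\infty\otimes E)$-topology. Testing this convergence against the elementary tensors $1_T\otimes x$, $x\in E$, gives $\langle x,\mathit{w}^*\text{-}\int f_{n_i}d\mu\rangle\to\langle x,\mathit{w}^*\text{-}\int f_0 d\mu\rangle$ for every $x\in E$, that is, $\mathit{w}^*\text{-}\int f_{n_i}d\mu\to\mathit{w}^*\text{-}\int f_0 d\mu$ weakly$^*$, whence $\mathit{w}^*\text{-}\int f_0 d\mu\in\mathit{w}^*\text{-}\mathrm{Ls}\{\mathit{w}^*\text{-}\int f_n d\mu\}$. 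Lemma~\ref{lem3} then yields $f_0(t)\in\overline{\mathrm{co}}^{\,\mathit{w}^*}\mathit{w}^*\text{-}\mathrm{Ls}\{f_{n_i}(t)\}\subset\overline{\mathrm{co}}^{\,\mathit{w}^*}\Phi(t)$ a.e. Integrating this inclusion and invoking saturation through Proposition~\ref{lyp}(v),
\[
\mathit{w}^*\text{-}\int f_0 d\mu\in\mathit{w}^*\text{-}\int\overline{\mathrm{co}}^{\,\mathit{w}^*}\Phi\,d\mu=\mathit{w}^*\text{-}\int\Phi\,d\mu,
\]
so by the definition of the Gelfand integral of a multifunction there is a Gelfand integrable selector $f$ of $\Phi$ with $\mathit{w}^*\text{-}\int f d\mu=\mathit{w}^*\text{-}\int f_0 d\mu$. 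This $f$ satisfies (a) by construction and (b) by the computation above, establishing (ii).

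Part (i) would then follow exactly as in Theorem~\ref{ks}: given $x\in\mathit{w}^*\text{-}\mathrm{Ls}\{\mathit{w}^*\text{-}\int f_n d\mu\}$, choose a subsequence whose Gelfand integrals converge weak$^*$ to $x$, apply (ii) to that subsequence, and use that any further subsequence of a bounded (hence weak$^*$-metrizable) weak$^*$-convergent sequence still converges to $x$. The resulting selector $f$ then has $\mathit{w}^*\text{-}\int f d\mu=x$ and $f(t)\in\mathit{w}^*\text{-}\mathrm{Ls}\{f_n(t)\}$ a.e., so $x\in\mathit{w}^*\text{-}\int\mathit{w}^*\text{-}\mathrm{Ls}\{f_n\}d\mu$.

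The main obstacle I anticipate is twofold. The conceptual point is the continuity of the Gelfand integration operator from the $\sigma(G^1_{E^*},L^\infty\otimes E)$-topology into the weak$^*$ topology of $E^*$: because $G^1(\mu,E^*)$ is in general not complete, one cannot import the weak compactness machinery that underpins Theorem~\ref{ks}, and this pairing-against-tensors argument is the precise substitute for the weak-continuity of the Bochner integral. The technical point is the verification that $\Phi$ is an admissible multifunction for Proposition~\ref{lyp}(v) — weak$^*$ compact-valued with measurable graph — together with the careful bookkeeping of nested subsequences, since $f_0$ arises only along a $\sigma$-convergent subsequence and one must track that weak$^*$ convergence of the integrals, and the selection identity, survive passage to the further subsequences produced in parts (ii) and (i).
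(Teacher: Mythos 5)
Your proposal is correct and follows essentially the same route as the paper's own proof: extract a $\sigma(G^1_{E^*},L^\infty\otimes E)$-convergent subsequence via Theorem~\ref{cm1}, pair against elementary tensors to pass the convergence to the Gelfand integrals, apply Lemma~\ref{lem3} and then Proposition~\ref{lyp}(v) under saturation to discard the weak$^*$ closed convex hull, and finally deduce the Fatou inclusion from the upper closure property. The only differences are cosmetic --- you make explicit the nonemptiness, weak$^*$ compactness, and measurability of $t\mapsto \mathit{w}^*\text{-}\mathrm{Ls}\{f_n(t)\}$, which the paper leaves to the remarks preceding the theorem.
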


\begin{proof}
Since the sequence $\{ f_n \}_{n\in \N}$ has a subsequence that converges to some $f_0\in G^1(\mu,E^*)$ in the $\sigma(G^1_{E^*},L^\infty\otimes E)$-\hspace{0pt}topology by Theorem \ref{cm1}, we have
\begin{align*}
\left\langle x,\lim_{i\to \infty}\mathit{w}^*\text{-}\int f_{n_i}d\mu \right\rangle=\lim_{i\to \infty}\int\langle x,f_{n_i}(t)\rangle d\mu
& =\int\langle x,f_0(t)\rangle d\mu \\
& =\left\langle x,\mathit{w}^*\text{-}\int f_0d\mu \right\rangle
\end{align*}
for every $x\in E$. Hence, $\mathit{w}^*\text{-}\int f_0d\mu\in \mathit{w}^*\text{-}\mathrm{Ls}\left\{ \mathit{w}^*\text{-}\int f_nd\mu \right\}$. By Lemma \ref{lem3}, we have $f_0(t)\in \overline{\mathrm{co}}^{\,\mathit{w}^*}\mathit{w}^*\text{-}\mathrm{Ls}\left\{ f_n(t) \right\}$ a.e.\ $t\in T$. Integrating this inclusion yields
$$
\mathit{w}^*\text{-}\int f_0d\mu\in \mathit{w}^*\text{-}\int\overline{\mathrm{co}}^{\,\mathit{w}^*}\mathit{w}^*\text{-}\mathrm{Ls}\left\{ f_n \right\}d\mu=\mathit{w}^*\text{-}\int\mathit{w}^*\text{-}\mathrm{Ls}\left\{ f_n \right\}d\mu.
$$
Therefore, there exists a Gelfand integrable selector $f$ of $\mathit{w}^*\text{-}\mathrm{Ls}\,\{ f_n \}$ such that $\mathit{w}^*\text{-}\int fd\mu=\mathit{w}^*\text{-}\int f_0d\mu \in \mathit{w}^*\text{-}\mathrm{Ls}\,\{ \mathit{w}^*\text{-}\int f_n d\mu \}$, which verifies condition (i). 

To show condition (ii), take any $x^*\in \mathit{w}^*\text{-}\mathrm{Ls}\,\{ \mathit{w}^*\text{-}\int f_nd\mu \}$. Then there exists a subsequence $\{ f_{n_i} \}_{i\in \N}$ such that $\mathit{w}^*\text{-}\lim_i\mathit{w}^*\text{-}\int f_{n_i}d\mu=x^*$. It follows from condition (i) that there exists $f\in G^1(\mu,E^*)$ such that (a) $f(t)\in \mathit{w}^*\text{-}\mathrm{Ls}\,\{ f_{n_i}(t) \}$ a.e.\ $t\in T$; and (b) $\mathit{w}^*\text{-}\int fd\mu=\mathit{w}^*\text{-}\lim_j\mathit{w}^*\text{-}\int f_{n_j}d\mu$, where $\{ f_{n_j} \}_{j\in \N}$ is a further subsequence of $\{ f_{n_i} \}_{i\in \N}$. Integrating the inclusion (a) together with condition (b) yields $x^*=\mathit{w}^*\text{-}\int fd\mu\in \int\mathit{w}^*\text{-}\mathrm{Ls}\,\{ f_n \}d\mu$.
\end{proof}

\begin{cor}[\citet{kss16}]
\label{cor2}
Let $(T,\Sigma,\mu)$ be a saturated finite measure space and $E$ be a separable Banach space. If $\{ \Gamma_n \}_{n\in \N}$ is a well-dominated sequence of multifunctions in $\M^1(\mu,E^*)$, then:
$$
\mathit{w}^*\text{-}\mathrm{Ls}\left\{ \mathit{w}^*\text{-}\int\Gamma_n d\mu \right\}\subset \mathit{w}^*\text{-}\int \mathit{w}^*\text{-}\mathrm{Ls}\left\{ \Gamma_n \right\}d\mu.
$$
\end{cor}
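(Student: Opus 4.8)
The plan is to mirror the deduction of Corollary \ref{cor1} from the exact Fatou's lemma, now invoking Theorem \ref{kss} in place of Theorem \ref{ks} and replacing weak convergence and Bochner integrals throughout by weak$^*$ convergence and Gelfand integrals. If $\mathit{w}^*\text{-}\mathrm{Ls}\{ \mathit{w}^*\text{-}\int\Gamma_n d\mu \}$ is empty the asserted inclusion is vacuous, so I would fix an arbitrary $x^*$ in this set. By the definition of the weak$^*$ upper limit there is a sequence $\{ x^*_n \}_{n\in \N}$ in $E^*$ with $x^*_n\in \mathit{w}^*\text{-}\int\Gamma_n d\mu$ and a subsequence $\{ x^*_{n_i} \}_{i\in \N}$ converging weakly$^*$ to $x^*$.

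The first substantive step is to realize each $x^*_{n_i}$ as a Gelfand integral of a selector. Since $x^*_{n_i}\in \mathit{w}^*\text{-}\int\Gamma_{n_i}d\mu$, the conventional definition of the Gelfand integral of a multifunction furnishes $f_{n_i}\in \mathcal{S}^{1,\mathit{w}^*}_{\Gamma_{n_i}}$ with $x^*_{n_i}=\mathit{w}^*\text{-}\int f_{n_i}d\mu$; the existence of such weakly$^*$ scalarly measurable selectors is underwritten by the separability of $E$ together with the measurable-graph and integrable-boundedness structure carried by the members of $\M^1(\mu,E^*)$.

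The second step is to verify that Theorem \ref{kss} applies to $\{ f_{n_i} \}_{i\in \N}$, i.e.\ that this sequence is integrably bounded. This is precisely where well-dominance of $\{ \Gamma_n \}$ enters: there is an integrably bounded, weakly$^*$ compact-valued multifunction $\tilde{\Gamma}$ with $\Gamma_n(t)\subset \tilde{\Gamma}(t)\subset \varphi(t)B^*$ for some $\varphi\in L^1(\mu)$, whence $\| f_{n_i}(t) \|\le \varphi(t)$ a.e., so all selectors are dominated by a single integrable function. Applying Theorem \ref{kss}(i) to $\{ f_{n_i} \}$ then yields $\mathit{w}^*\text{-}\mathrm{Ls}\{ \mathit{w}^*\text{-}\int f_{n_i}d\mu \}\subset \mathit{w}^*\text{-}\int \mathit{w}^*\text{-}\mathrm{Ls}\{ f_{n_i} \}d\mu$.

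Finally I would close the chain of inclusions. Because $\mathit{w}^*\text{-}\int f_{n_i}d\mu=x^*_{n_i}\to x^*$ weakly$^*$, we have $x^*\in \mathit{w}^*\text{-}\mathrm{Ls}\{ \mathit{w}^*\text{-}\int f_{n_i}d\mu \}$; and pointwise $\mathit{w}^*\text{-}\mathrm{Ls}\{ f_{n_i}(t) \}\subset \mathit{w}^*\text{-}\mathrm{Ls}\{ \Gamma_{n_i}(t) \}\subset \mathit{w}^*\text{-}\mathrm{Ls}\{ \Gamma_n(t) \}$, since $f_{n_i}(t)\in \Gamma_{n_i}(t)$ and the weak$^*$ upper limit of a subsequence is contained in that of the full sequence. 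Monotonicity of the Gelfand integral of multifunctions under pointwise inclusion then gives $x^*\in \mathit{w}^*\text{-}\int \mathit{w}^*\text{-}\mathrm{Ls}\{ \Gamma_n \}d\mu$, completing the argument. The only genuinely delicate point—and hence where I would take the most care—is the passage from membership $x^*_{n_i}\in \mathit{w}^*\text{-}\int\Gamma_{n_i}d\mu$ to an \emph{integrably bounded} family of Gelfand integrable selectors with the prescribed integrals; the remaining manipulations are routine bookkeeping with subsequences and inclusions.
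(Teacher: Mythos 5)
Your proposal is correct and follows essentially the same route as the paper's own proof: reduce to the nonempty case, realize each $x^*_{n_i}$ as the Gelfand integral of a selector $f_{n_i}\in \mathcal{S}^{1,\mathit{w}^*}_{\Gamma_{n_i}}$, invoke Theorem \ref{kss} for the resulting integrably bounded sequence, and pass from $\mathit{w}^*\text{-}\mathrm{Ls}\{f_{n_i}\}$ to $\mathit{w}^*\text{-}\mathrm{Ls}\{\Gamma_n\}$ by pointwise inclusion. Your added care about why well-dominance yields a single integrable bound $\varphi$ for all the selectors is exactly the point the paper leaves implicit.
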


\begin{proof}
If $\mathit{w}^*\text{-}\mathrm{Ls}\,\{ \mathit{w}^*\text{-}\int\Gamma_n d\mu \}=\emptyset$, then the result is trivially true. Thus, without loss of generality, we may assume that $\mathit{w}^*\text{-}\mathrm{Ls}\,\{ \mathit{w}^*\text{-}\int\Gamma_n d\mu \}\ne\emptyset$. Take any $x\in \mathit{w}^*\text{-}\mathrm{Ls}\,\{ \mathit{w}^*\text{-}\int \Gamma_nd\mu \}$. Then there is a sequence $\{ x_n \}_{n\in \N}$ in $E^*$ with $x_n\in \mathit{w}^*\text{-}\int \Gamma_nd\mu$ for each $n$ such that $x_{n_i}\to x$ weakly$^*$. Hence, there is an integrably bounded sequence $\{ f_{n_i} \}_{i\in \N}$ in $G^1(\mu,E^*)$ such that $x_{n_i}=\mathit{w}^*\text{-}\int f_{n_i}d\mu$ and $f_{n_i}\in \mathcal{S}^{1,\mathit{w}^*}_{\Gamma_{n_i}}$ for each $i$. It follows from Theorem \ref{kss} that
\begin{align*}
x=\mathit{w}^*\text{-}\lim_{i\to \infty}x_{n_i}\in \mathit{w}^*\text{-}\mathrm{Ls}\left\{ \mathit{w}^*\text{-}\int f_nd\mu \right\}
& \subset \int \mathit{w}^*\text{-}\mathrm{Ls}\left\{ f_n \right\}d\mu \\
& \subset \mathit{w}^*\text{-}\int \mathit{w}^*\text{-}\mathrm{Ls}\left\{ \Gamma_n \right\}d\mu.
\end{align*}
Therefore, the desired inclusion holds.
\end{proof}

For the earlier exact result in nonatomic Loeb measure spaces, see \cite{ls07,su97}.   

For the sake of simplicity, given a sequence $\{ f_n \}_{n\in \N}$ in $G^1(\mu,E^*)$ such that $\mathit{w}^*\text{-}\mathrm{Ls}\,\{ \mathit{w}^*\text{-}\int f_nd\mu \}$ is nonempty, condition (i) of Theorem \ref{kss} is referred to the \textit{weak$^*\!$ Fatou property} and condition (ii) of Theorem \ref{kss} is referred to the \textit{weak$^*\!$ upper closure property}. Similar to the function case, given a sequence of multifunctions $\{ \Gamma_n \}_{n\in \N}$ in $\M^1(\mu,E^*)$ such that $\mathit{w}^*\text{-}\mathrm{Ls}\,\{ \mathit{w}^*\text{-}\int \Gamma_nd\mu \}$ is nonempty, the inclusion of Corollary \ref{cor2} is referred to the \textit{weak$^*\!$ Fatou property}. 

\begin{prop}[\citet{kss16}]
Let $(T,\Sigma,\mu)$ be a nonatomic finite measure space and $E$ be an infinite-\hspace{0pt}dimensional separable Banach space. Then the following conditions are equivalent:
\begin{enumerate}[\rm (i)]
\item $(T,\Sigma,\mu)$ has the saturation property.
\item Every integrably bounded sequence of functions in $G^1(\mu,E^*)$ has the weak$^*\!$ Fatou property.
\item Every integrably bounded sequence of functions in $G^1(\mu,E^*)$ has the weak$^*\!$ upper-\hspace{0pt}closure property.
\item Every well-dominated sequence of multifunctions in $\M^1(\mu,E^*)$ has the weak$^*\!$ Fatou property.
\end{enumerate}
\end{prop}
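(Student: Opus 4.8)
The plan is to prove the cyclic chain $\mathrm{(i)}\Rightarrow\mathrm{(iii)}\Rightarrow\mathrm{(ii)}\Rightarrow\mathrm{(iv)}\Rightarrow\mathrm{(i)}$, following the template of Proposition \ref{eqv1} but with the weak topology and the Bochner integral systematically replaced by the weak$^*\!$ topology and the Gelfand integral. The implication $\mathrm{(i)}\Rightarrow\mathrm{(iii)}$ is immediate: it is exactly condition (ii) of Theorem \ref{kss}, which under saturation produces, for any integrably bounded $\{f_n\}$ in $G^1(\mu,E^*)$, a Gelfand integrable selector $f$ of $\mathit{w}^*\text{-}\mathrm{Ls}\{f_n\}$ with $\mathit{w}^*\text{-}\int f d\mu\in\mathit{w}^*\text{-}\mathrm{Ls}\{\mathit{w}^*\text{-}\int f_n d\mu\}$, i.e.\ the weak$^*\!$ upper closure property. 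The only nonelementary ingredient here is the exact Gelfand--Lyapunov identity $\mathit{w}^*\text{-}\int\overline{\mathrm{co}}^{\,\mathit{w}^*}\mathit{w}^*\text{-}\mathrm{Ls}\{f_n\}d\mu=\mathit{w}^*\text{-}\int\mathit{w}^*\text{-}\mathrm{Ls}\{f_n\}d\mu$, supplied by Proposition \ref{lyp}(v).

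For $\mathrm{(iii)}\Rightarrow\mathrm{(ii)}$ I would fix $x^*\in\mathit{w}^*\text{-}\mathrm{Ls}\{\mathit{w}^*\text{-}\int f_n d\mu\}$, pass to a subsequence with $\mathit{w}^*\text{-}\int f_{n_i}d\mu\to x^*$ weakly$^*$, and apply the weak$^*\!$ upper closure property to $\{f_{n_i}\}$. This yields $f$ with $f(t)\in\mathit{w}^*\text{-}\mathrm{Ls}\{f_{n_i}(t)\}\subset\mathit{w}^*\text{-}\mathrm{Ls}\{f_n(t)\}$ a.e.\ and $\mathit{w}^*\text{-}\int f d\mu\in\mathit{w}^*\text{-}\mathrm{Ls}\{\mathit{w}^*\text{-}\int f_{n_i}d\mu\}$; since the full subsequence of integrals already converges to $x^*$, that upper limit is the singleton $\{x^*\}$, forcing $\mathit{w}^*\text{-}\int f d\mu=x^*$ and hence $x^*\in\mathit{w}^*\text{-}\int\mathit{w}^*\text{-}\mathrm{Ls}\{f_n\}d\mu$. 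The step $\mathrm{(ii)}\Rightarrow\mathrm{(iv)}$ is the selector reduction already carried out in Corollary \ref{cor2}: given a well-dominated sequence of multifunctions and $x\in\mathit{w}^*\text{-}\mathrm{Ls}\{\mathit{w}^*\text{-}\int\Gamma_n d\mu\}$, I would choose Gelfand integrable selectors $f_{n_i}\in\mathcal{S}^{1,\mathit{w}^*}_{\Gamma_{n_i}}$ whose integrals converge weakly$^*$ to $x$, invoke the function-level weak$^*\!$ Fatou property, and finish with $\mathit{w}^*\text{-}\mathrm{Ls}\{f_n\}\subset\mathit{w}^*\text{-}\mathrm{Ls}\{\Gamma_n\}$.

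The remaining implication $\mathrm{(iv)}\Rightarrow\mathrm{(i)}$ I would argue contrapositively, and this is where the real content lies. Suppose $(T,\Sigma,\mu)$ is nonatomic but not saturated; then there is $S\in\Sigma$ with $\mu(S)>0$ such that $(S,\Sigma_S,\mu)$ is essentially countably generated, and therefore nonatomic and not saturated. Proposition \ref{rust2}, applied to $(S,\Sigma_S,\mu)$ and $E$, then furnishes an integrably bounded sequence in $G^1(\mu|_S,E^*)$ violating the weak$^*\!$ Fatou property on $S$. Viewing each $f_n$ as the singleton-valued multifunction $t\mapsto\{f_n(t)\}$, extending it (and its dominating multifunction) by $\{0\}$ off $S$, I obtain a well-dominated sequence in $\M^1(\mu,E^*)$ whose Gelfand integrals over $T$ agree with those over $S$ and whose pointwise upper limits vanish off $S$; the failure then transfers verbatim from $S$ to $T$, so condition (iv) fails.

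The hard part is precisely this last implication. All of the heavy analysis --- the exact Gelfand--Lyapunov identity under saturation and the Rustichini-type breakdown of Lyapunov convexity without it --- is inherited from Theorem \ref{kss}, Corollary \ref{cor2}, and Proposition \ref{rust2}, so the proposition reduces to two bookkeeping points: localizing the obstruction to a positive-measure, essentially countably generated piece $S$ (which is exactly the characterization of non-saturation being exploited), and checking that the purely function-valued counterexample degrades the multifunction statement (iv), which is immediate once functions are read as singleton multifunctions and the zero-extension is seen to preserve both sides of the inclusion.
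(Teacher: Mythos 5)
Your proposal is correct and follows essentially the same route as the paper: the chain $\mathrm{(i)}\Rightarrow\mathrm{(iii)}\Rightarrow\mathrm{(ii)}\Rightarrow\mathrm{(iv)}$ is delegated to the proofs of Theorem \ref{kss} and Corollary \ref{cor2}, and $\mathrm{(iv)}\Rightarrow\mathrm{(i)}$ is argued contrapositively by localizing non-saturation to a positive-measure essentially countably generated piece $S$, invoking Proposition \ref{rust2} there, and extending the counterexample (and its dominating multifunction) by zero off $S$. Your explicit remark that the function-valued counterexample must be read as a sequence of singleton multifunctions to defeat (iv) is a point the paper glosses over, but it is exactly the intended bookkeeping.
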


\begin{proof}
The implication (i) $\Rightarrow$ (iii) $\Rightarrow$ (ii) $\Rightarrow$ (iv) is already shown in the proof of Theorem \ref{kss} and Corollary \ref{cor2}. We show the implication (iv) $\Rightarrow$ (i). Suppose that a nonatomic finite measure space $(T,\Sigma,\mu)$ is not saturated. Then there exists $S\in \Sigma$ with $\mu(S)>0$ such that $(S,\Sigma_S,\mu)$ is countably generated. Since $\mu$ is nonatomic, it follows from Proposition \ref{rust2} that there exists a well-\hspace{0pt}dominated sequence of Gelfand integrable functions $\{ f_n \}_{n\in \N}$ from $S$ to $E^*$ such that the weak$^*\!$ Fatou property fails to hold for $(S,\Sigma_S,\mu)$. Let $\Gamma:S\twoheadrightarrow E^*$ be a dominating multifunction for $\{ f_n \}_{n\in \N}$. Extend the functions to $T$ by $\tilde{f}_n(t)=f_n(t)$ if $t\in S$ and $\tilde{f}_n(t)=\{ 0 \}$ otherwise, and similarly $\tilde{\Gamma}(t)=\Gamma(t)$ if $t\in S$ and $\tilde{\Gamma}(t)=\{ 0 \}$ otherwise. By construction, the well-\hspace{0pt}dominated sequence of Gelfand integrable functions $\{ \tilde{f}_n \}_{n\in \N}$ fails to satisfy the weak$^*\!$ Fatou property for $(T,\Sigma,\mu)$.
\end{proof}

\section{Galerkin Approximations with Finite-\hspace{0pt}Dimensional Projections}
In this section, we describe some useful properties of projections in locally convex Hausdorff spaces (lcHs) to deal with Banach spaces with the norm topology and their dual spaces with the weak$^*$ topology simultaneously. Furthermore, we provide an intimate relation between finite-dimensional projections and Galerkin approximation schemes.

\subsection{Projections in LcHs}
Let $E$ be a locally convex Hausdorff space. For a vector subspace $V$ of $E$, we denote by $V^\perp$ the \textit{annihilator} of $V$, i.e., $V^\perp=\{ p\in E^*\mid \langle p,x \rangle=0 \ \forall x\in V \}$. If $V$ is a closed vector subspace of $E$ topologically complemented in $E$, then there is a continuous projection $P$ of $E$ onto $V$ such that $E=V\oplus \mathcal{R}(I-P)$, where $\mathcal{R}(I-P)$ is the range of the continuous projection $I-P$ and it is a topologically complemented subspace of $V$ in $E$ (see \cite[\S10, 7(6)]{ko69}). Let $P^*:E^*\to E^*$ be the adjoint operator of $P$. Then $P^*$ is a continuous projection of $E^*$ onto $V^*$ and $I-P^*$ is a continuous projection of $E^*$ onto $(\mathcal{R}(I-P))^*$, and hence, $E^*=V^*\oplus(\mathcal{R}(I-P))^*$, where $E^*=V^*\oplus(\mathcal{R}(I-P))^*$ and $V^*=\mathcal{R}(I-P)^\perp$ and $\mathcal{R}(I-P^*)=V^\perp$ (see \cite[\S10, 8(5), (6)]{ko69}). 

In particular, if $V^n$ is an $n$-dimensional vector subspace of $E$, there is a continuous projection $P_n$ of $E$ onto $V^n$ such that $E=V^n\oplus \mathcal{R}(I-P_n)$ and $\mathcal{R}(I-P_n)$ is a topologically complemented subspace of $V^n$ in $E$ (see \cite[\S10, 7(8)]{ko69}). It is easy to see that $P_nP_m=P_mP_n=P_{\min\{ m,n \}}$ for each $m,n\in \N$. Let $P_n^*:E^*\to E^*$ be the adjoint operator of $P_n$. Then $P_n^*$ is a continuous  projection of $E^*$ onto $(V^n)^*=\mathcal{R}(I-P_n)^\perp$ and $I-P_n^*$ is a projection of $E^*$ onto $(\mathcal{R}(I-P_n))^*=(V^n)^\perp$. It is easy to see that $P_n^*P_m^*=P_m^*P_n^*=P_{\min\{ m,n \}}^*$. 

The following simple observation is employed for the finite truncation method in separable Banach spaces and $L^\infty$ explored in Section \ref{sec}. 

\begin{thm}
\label{prj1}
Let $E$ be a lcHs and $P$ be a continuous projection of $E$ onto a closed vector subspace $V$. 
\begin{enumerate}[\rm(i)]
\item If $\{ x_n \}_{n\in \N}$ is a sequence in $E$ converging weakly to $x\in V$, then $\{ Px_n \}_{n\in \N}$ is a sequence in $E$ converging weakly to $x$. 
\item If $\{ p_n \}_{n\in \N}$ is a sequence in $E^*$ converging weakly$^*$ to $p\in V^*$, then $\{ P^*p_n \}_{n\in \N}$ is a sequence in $E^*$ converging weakly$^*$ to $p$. 
\end{enumerate}
\end{thm}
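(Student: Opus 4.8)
The plan is to handle the two parts in parallel, reducing each to two elementary facts: that a continuous linear operator on a locally convex Hausdorff space is automatically continuous for the relevant weak topology, and that a projection restricts to the identity on its range.

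For part (i), I would first recall that any continuous linear operator between locally convex Hausdorff spaces is weakly continuous: for each $\phi\in E^*$ the composite $\phi\circ P$ again lies in $E^*$ by continuity of $P$, so $P$ is $\sigma(E,E^*)$-$\sigma(E,E^*)$ continuous. Concretely, weak convergence $x_n\to x$ means $(\phi\circ P)(x_n)\to(\phi\circ P)(x)$ for every $\phi\in E^*$, which is exactly $Px_n\to Px$ weakly. It then remains only to note that, since $P$ projects onto $V$ and $x\in V$, we have $Px=x$ (a projection fixes its range: if $x=Py$ then $Px=P^2y=Py=x$). Combining these yields $Px_n\to x$ weakly, as claimed.

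For part (ii), I would argue directly from the defining adjoint relation $\langle Px,q\rangle=\langle x,P^*q\rangle$, valid for all $x\in E$ and $q\in E^*$. Fixing any $x\in E$ and testing the weak$^*$ convergence $p_n\to p$ against the vector $Px\in E$ gives $\langle x,P^*p_n\rangle=\langle Px,p_n\rangle\to\langle Px,p\rangle=\langle x,P^*p\rangle$. Since $(P^*)^2=(P^2)^*=P^*$, the adjoint $P^*$ is itself a projection with range $V^*=\mathcal{R}(P^*)$, so the hypothesis $p\in V^*$ gives $P^*p=p$ by the same fixed-range argument as in part (i). Hence $\langle x,P^*p_n\rangle\to\langle x,p\rangle$ for every $x\in E$, which is precisely weak$^*$ convergence of $\{P^*p_n\}_{n\in\N}$ to $p$.

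I do not anticipate any serious obstacle; once the adjoint identity and the projection property are in hand, both parts are bookkeeping. The only point deserving minor care is the identification of $V^*$ with the range $\mathcal{R}(P^*)$ of the adjoint projection, as established in the preceding discussion, so that the hypothesis $p\in V^*$ may legitimately be read as $P^*p=p$; the parallel hypothesis $x\in V$ in part (i) plays precisely the same role.
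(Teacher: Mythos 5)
Your proposal is correct and follows essentially the same route as the paper: both parts reduce to the adjoint identity $\langle p,Px\rangle=\langle P^*p,x\rangle$ together with the observations that $Px=x$ for $x\in V$ and $P^*p=p$ for $p\in V^*=\mathcal{R}(P^*)$. Your explicit remark that the hypothesis $p\in V^*$ must be read through the identification of $V^*$ with the range of $P^*$ is a point the paper leaves to its preceding discussion of adjoint projections, but the computation is the same.
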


\begin{proof}
(i): In view of $Px=x$, for every $p\in E^*$: 
\begin{align*}
\lim_{n\to \infty}\langle p,Px_n \rangle=\lim_{n\to \infty}\langle P^*p,x_n \rangle=\langle P^*p,x \rangle=\langle p,Px \rangle=\langle p,x \rangle.
\end{align*}
Thus, $Px_n\to x$ weakly in $E$. 

(ii) Since $P^*$ is the projection of $E^*$ onto $V^*$, we have $P^*p=p$. Thus, for every $x\in E$:
\begin{align*}
\lim_{n\to \infty}\langle P^*p_n,x \rangle=\lim_{n\to \infty}\langle p_n,P_x \rangle=\langle p,Px \rangle=\langle P^*p,x \rangle=\langle p,x \rangle.
\end{align*}
Hence, $P^*p_n\to p$ weakly$^*$ in $E^*$.
\end{proof}

Let $E$ be a (real) vector space which is endowed with an order structure defined by a reflexive, transitive, and anti-symmetric binary relation $\ge$. Then $(E,\ge)$ is called an \textit{ordered vector space} if the following conditions are satisfied: (i) $x\ge y$ implies $x+z\ge y+z$ for every $x,y,z\in E$; (ii) $x\ge y$ implies $\alpha x\ge \alpha y$ for every $x,y\in E$ and $\alpha>0$. The subset $E_+=\{ x\in E\mid x\ge 0 \}$ is called a \textit{positive cone} of an ordered vector space $E$. A subset $C$ of a vector space $E$ is called a \textit{proper cone} if $C+C\subset C$, $\alpha C\subset C$ for every $\alpha>0$, and $C\cap(-C)=\{ 0 \}$. Every positive cone of an ordered vector space is a proper cone. Conversely, every proper cone $C$ of a vector space $E$ defines, by virtue of $x\ge y \Leftrightarrow y-x\in C$, an order under which $E$ is an ordered vector with positive cone $C$. If $V$ is a vector subspace of an order vector space $E$, then the induced ordering on $V$ is determined by the proper cone $V\cap E_+$, and hence, $V_+=V\cap E_+$ is a positive cone of $V$. 

As a convention, by an \textit{ordered lcHs}, we always assume that its positive cone is closed (see \cite[V.4.(LTO)]{sw99}). Let $E$ be an ordered lcHs. A continuous linear functional $x^* \in E^*$ is said to be \textit{positive} if $\langle x,x^* \rangle \ge 0$ for every $x\in E_+$. Denote by $E^*_+$ the set of continuous linear functionals on $E$. Let $E$ and $F$ be ordered vector spaces. A linear operator $u:E\to F$ is said to be \textit{positive} if $u(E_+)\subset u(F_+)$.  

The following observation is also useful in Section \ref{sec}.

\begin{thm}
\label{pos}
Let $E$ be an ordered lcHs and $P$ be a continuous projection of $E$ onto a closed vector subspace $V$. Then $P:E\to V$ is a positive linear operator. 
\end{thm}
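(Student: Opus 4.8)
The plan is to unwind the definition of positivity down to its single nontrivial inclusion and then locate the order-theoretic input that must be supplied. Recall that the ordering induced on $V$ is given by the cone $V_+ = V \cap E_+$, so the assertion that $P : E \to V$ is positive means $P(E_+) \subset V_+ = V \cap E_+$. First I would split this into two inclusions. The membership $Px \in V$ holds for every $x \in E$ by the very definition of a projection onto $V$ (indeed $\mathcal{R}(P) = V$), so this half is automatic and carries no content. The entire substance of the statement is therefore the remaining inclusion
\[
x \in E_+ \;\Longrightarrow\; Px \in E_+,
\]
that is, that $P$ maps the (closed) positive cone of $E$ into itself.

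To establish $Px \in E_+$ I would exploit the topological decomposition $E = V \oplus \mathcal{R}(I-P)$ that comes with the projection, writing $x = Px + (I-P)x$ with $Px \in V$ and $(I-P)x \in \mathcal{R}(I-P)$, and try to identify $Px$ as the order component of $x$ inside $V$. This succeeds precisely when the splitting is compatible with the order: if $E$ is a vector lattice, $V$ a projection band, and $P$ the corresponding band projection, then for $x \ge 0$ the Riesz band decomposition yields $Px \ge 0$ (and $(I-P)x \ge 0$) directly. In the Galerkin setting of the previous subsection this is exactly what happens --- $P_n$ truncates a positive unconditional (Schauder) basis $\{ e_i \}$, so that $x = \sum_i \alpha_i e_i \ge 0$ gives $P_n x = \sum_{i \le n} \alpha_i e_i \ge 0$ --- and it is this coordinate/band structure, rather than mere continuity of the projection, on which I would build the argument.

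The step I expect to be the genuine obstacle --- indeed the crux of the whole statement --- is precisely this inclusion $Px \in E_+$, because continuity and idempotency of $P$ do not by themselves force it. A generic continuous projection onto a closed subspace can push a positive vector out of the cone: already in $E = \R^2$ with $E_+$ the first quadrant and $V$ the first coordinate axis, the map $P(a,b) = (a-b,0)$ is a continuous idempotent onto $V$ yet sends $(0,1) \ge 0$ to $(-1,0) \notin E_+$. Consequently the proof cannot proceed from the abstract projection hypotheses alone; it must invoke the order-compatibility of $P$, i.e.\ its realization as a band (coordinate) projection aligned with $E_+$, equivalently a verification that the complement $\mathcal{R}(I-P)$ meets $E_+$ in the right way. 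Pinning down that compatibility is where the real work resides, after which the positivity of $P$ follows at once.
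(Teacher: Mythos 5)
You are right to be suspicious: the statement as printed is false, and your $\R^2$ counterexample is valid --- with $E_+$ the first quadrant, $V$ the first coordinate axis, and $P(a,b)=(a-b,0)$, the map $P$ is a continuous linear idempotent with range $V$, yet $P(0,1)=(-1,0)\notin E_+$. Continuity and idempotency of a projection onto a closed subspace of an ordered lcHs (even with a closed cone) do not imply positivity, so no proof from the stated hypotheses can exist; some order-compatibility of $P$ with $E_+$ (a band/coordinate projection, a positive basis, or an explicit hypothesis $P(E_+)\subset E_+$) has to be added, exactly as you say.

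For comparison, the paper does offer a proof, but it is fallacious, and it is worth seeing where. Assuming $Px\notin V_+$, it invokes the separation theorem to produce $x^*\in E^*$ with $\langle Px,x^*\rangle=1$ and $\langle y,x^*\rangle=0$ for \emph{every} $y\in V$. That is the separation of a point from the closed \emph{subspace} $V$, which requires $Px\notin V$; but $Px\in V$ always holds, so such an $x^*$ cannot exist for any projection whatsoever, and the ensuing ``contradiction'' $1=0$ is manufactured by the misapplied separation step rather than by the hypothesis $Px\notin V_+$. The correct separation of $Px$ from the closed convex cone $V_+=V\cap E_+$ only yields $x^*$ with $\langle Px,x^*\rangle>0\ge\langle y,x^*\rangle$ for $y\in V_+$, and then the chain of equalities in the paper's display breaks down at $\langle x_1,x^*\rangle=0$ (there is no reason for $x^*$ to annihilate $x_1=Px\in V$). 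Your diagnosis is therefore the right one, and it matters downstream: Step 1 of the proof of Theorem \ref{WE1} uses this positivity to conclude $P_n(X(t))\subset V^n_+$, so the Galerkin projections there must in fact be chosen order-compatibly, not merely as arbitrary continuous projections onto the $V^n$.
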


\begin{proof}
Choose any $x\in E_+$. If $Px\not\in V_+$, then by the separation theorem (see \cite[Theorem 3.5]{ru91}), there exists $x^*\in E^*$ such that $\langle Px,x^* \rangle=1$ and $\langle y,x^* \rangle=0$ for every $y\in V$. Let $x=x_1\oplus x_2$ be the direct sum decomposition of $x$ into $x_1\in V$ and $x_2\in \mathcal{R}(I-P)$. Since $\langle x_1,x^* \rangle=0$ and $\langle x_2,P^*x^* \rangle=0$, we have
$$
1=\langle Px,x^* \rangle=\langle x_1+x_2,P^*x^* \rangle=\langle x_1,P^*x^* \rangle=\langle Px_1,x^* \rangle=\langle x_1,x^* \rangle=0,
$$
a contradiction. Therefore, $Px\in V_+$. 
\end{proof}

\subsection{Galerkin Approximations}
\begin{quote}
A quite natural idea when considering an infinite dimensional (variational) problem is to approximate it by finite dimensional problems. This has important consequences both from theoretical (existence, etc.)\ and the numerical point of view.\ [...]\ We stress the fact that this type of finite dimensional approximation method is very flexible and can be applied [...] to a large number of linear or nonlinear problems. \cite[p.\,71]{abm14}.
\end{quote}

\begin{dfn}
A \textit{Galerkin approximation scheme} of a lcHs $E$ is a sequence $\{ V^n \}_{n\in \N}$ of finite-dimensional subspaces of $E$ such that for every $x\in E$, there exists a sequence $\{ x_n \}_{n\in \N}$ with $x_n\in V^n$ for each $n\in \N$ and $x_n\to x$ in the locally convex topology of $E$.  
\end{dfn}

The following result is fundamental.
 
\begin{prop}
\label{gal}
Let $(E,\| \cdot \|)$ be a separable Banach space. 
\begin{enumerate}[\rm (i)]
\item There exists a Galerkin approximation scheme $\{ V^n \}_{n\in \N}$ of $(E,\| \cdot \|)$ such that $V^1\subset V^2\subset\cdots$ and $\overline{\bigcup_{n\in \N}V^n}^{\,\| \cdot \|}=E$. 
\item There exists a Galerkin approximation scheme $\{ W^n \}_{n\in \N}$ of $(E^*,\mathit{w}^*)$ such that $W^1\subset W^2\subset\cdots$ and $\overline{\bigcup_{n\in \N}W^n}^{\,\mathit{w}^*}=E^*$. 
\end{enumerate}
\end{prop}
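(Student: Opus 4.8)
The plan is to read both schemes off countable dense sets, treating the norm case directly and the weak$^*$ case by localizing to balls, where the weak$^*$ topology becomes metrizable. For part (i), I would take a countable norm-dense subset $\{ x_k \}_{k\in\N}$ of $E$, which exists by separability, and set $V^n=\mathrm{span}\{ x_1,\dots,x_n \}$. These subspaces are finite-dimensional, nested, and their union contains every $x_k$, so $\overline{\bigcup_n V^n}^{\,\|\cdot\|}=E$. For the Galerkin property, fix $x\in E$ and put $d_n=\inf_{v\in V^n}\| x-v \|$; since the $V^n$ increase to a dense union, $\{ d_n \}$ is non-increasing with $d_n\to 0$, so choosing $x_n\in V^n$ with $\| x-x_n \|<d_n+1/n$ gives $x_n\to x$ in norm. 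No projections or approximation property are needed here.

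For part (ii) a naive imitation breaks down, and this is where the difficulty sits. One is tempted to fix a countable weak$^*$-dense sequence $\{ q_k \}$ in $E^*$ and set $W^n=\mathrm{span}\{ q_1,\dots,q_n \}$, obtaining a weak$^*$-dense union at once. But weak$^*$ density does \emph{not} deliver the \emph{sequential} approximation the definition demands: for infinite-dimensional $E$ the weak$^*$ topology on $E^*$ is not metrizable, so membership of a point in the weak$^*$ closure of $\bigcup_n W^n$ only yields an approximating net. Moreover, by the Banach--Steinhaus theorem every weak$^*$-convergent sequence in $E^*$ is norm-bounded, so any admissible approximating sequence is forced to remain inside a fixed ball --- a constraint the bare density statement cannot meet. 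Reconciling sequential convergence with this boundedness requirement is the main obstacle.

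The exit is to use separability of $E$ more forcefully. By Alaoglu's theorem each closed ball $m\overline{B^*}=\{ p\in E^*\mid \| p \|\le m \}$ is weak$^*$-compact, and separability of $E$ renders it weak$^*$-metrizable. I would therefore choose, for each $m\in\N$, a countable weak$^*$-dense subset $D_m$ of $m\overline{B^*}$, fix an enumeration $\{ q_k \}$ of $\bigcup_m D_m$, and set $W^n=\mathrm{span}\{ q_1,\dots,q_n \}$. Given $p\in E^*$, pick $m$ with $\| p \|\le m$; because $D_m$ is weak$^*$-dense in the \emph{metrizable} compact set $m\overline{B^*}$, closure and sequential closure agree there, so some subsequence satisfies $q_{k_i}\to p$ weak$^*$ with $q_{k_i}\in D_m$ and, after thinning, $k_1<k_2<\cdots$. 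Setting $\widehat{q}_n=q_{k_i}$ for $k_i\le n<k_{i+1}$ (and $\widehat{q}_n=0$ for $n<k_1$) yields $\widehat{q}_n\in W^{k_i}\subseteq W^n$ with $\widehat{q}_n\to p$ weak$^*$; in particular $\bigcup_n W^n$ is weak$^*$-dense, so both clauses hold. This stuttering device is what converts a convergent subsequence drawn from $\bigcup_n W^n$ into one respecting the nesting $\widehat{q}_n\in W^n$. I would stress that the uniformly bounded adjoint projections $P_n^*$ of Theorem~\ref{prj1} are not available in this generality, since a separable Banach space need not possess the approximation property; it is exactly the passage to weak$^*$-metrizable balls that substitutes for them.
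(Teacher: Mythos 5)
Your proposal is correct and follows essentially the same route as the paper: part (i) by spanning an increasing initial segment of a countable norm-dense set, and part (ii) by exploiting the weak$^*$-metrizability of norm-bounded balls in $E^*$ (via separability of $E$) to upgrade weak$^*$ density to sequential approximation. The only cosmetic difference is that the paper works with a countable weak$^*$-dense subset of the unit ball $S^*$ alone and normalizes $p$ into $S^*$, whereas you enumerate dense subsets of all the balls $m\overline{B^*}$; your ``stuttering'' device plays the same role as the paper's estimate $\mathrm{dist}(p,W^n)\le\delta(p,q_{n(k)})<1/k$ for $n\ge n(k)$.
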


\begin{proof}
(i): For the construction of a Galerkin approximation scheme, just take a countable dense subset $\{ x_i \}_{i\in \N}$ of $E$ and let $V^n$ be the linear span of the finite set $\{ x_1,\dots,x_n\}$. See \cite[Proposition 3.1.1]{abm14} for details. 

(ii): Let $S^*$ be the closed unit ball in $E^*$. Since $E^*$ is separable with respect to the weak$^*$ topology because of the separability of $E$ (see \cite[IV.1.7]{sw99}), there is a countable set $\{ q_i \}_{i\in \N}$ in $S^*$ such that $\overline{\{ q_i \}_{i\in \N}}^{\,\mathit{w}^*}=S^*$. Let $W^n$ be the linear span of the finite set $\{ q_1,\dots,q_n \}$. By construction, $W^1\subset W^2\subset\cdots$ and $\overline{\bigcup_{n\in \N}W^n}^{\,\mathit{w}^*}=E^*$. Let $p\in E^*\setminus \{ 0 \}$ be arbitrarily fixed. We must construct $p_n\in W^n$ such that $p_n\to p$ weakly$^*$. By virtue of normalization, without loss of generality we may assume that $p\in S^*$. Since $S^*$ is metrziable with respect to the weak$^*$ topology in view of the separability of $E$ (see \cite[Theorem V.3.1]{ds58}), for a compatible metric $\delta$ on $S^*$ with the weak$^*$ topology, there exists a mapping $k\mapsto n(k)$ from $\N$ into $\N$ such that $\delta(p,q_{n(k)})<1/k$ for each $k\in \N$. Since $q_{n(k)}\in W^{n(k)}\subset W^n$ for each $k,n\in \N$ with $n\ge n(k)$, we obtain $\mathrm{dist}(p,W^n)\le \mathrm{dist}(p,W^{n(k)})<1/k$ for each $n\ge n(k)$, and hence, $\mathrm{dist}(p,W^n)\to 0$ as $n\to \infty$, which guarantees that there exists $p_n\in W^n$ such that $\delta(p, p_n)\to 0$. 
\end{proof}

\begin{thm}
\label{prj2}
Let $(E,\| \cdot \|)$ be a separable Banach space. 
\begin{enumerate}[\rm (i)]
\item If $\{ V^n \}_{n\in \N}$ is a Galerkin approximation scheme of $(E,\| \cdot \|)$ such that $V^1\subset V^2\subset\cdots$ and $\overline{\bigcup_{n\in \N}V^n}^{\,\| \cdot \|}=E$, and $P_n$ is a continuous projection of $E$ onto $V^n$, then for every $x\in E$ the sequence $\{ P_nx \}_{n\in \N}$ contains a subsequence converging weakly to $x$. 
\item If $\{ W^n \}_{n\in \N}$ is a Galerkin approximation scheme of $(E^*,\mathit{w}^*)$ such that $W^1\subset W^2\subset\cdots$ and $\overline{\bigcup_{n\in \N}W^n}^{\,\mathit{w}^*}=E^*$, and $Q_n$ is a continuous projection of $E^*$ onto $W^n$, then for every $p\in E^*$ the sequence $\{ Q_nx \}_{n\in \N}$ contains a subsequence converging weakly$^*\!$ to $p$. 
\end{enumerate}
\end{thm}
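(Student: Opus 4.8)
The plan is to reduce both parts to the approximating sequence furnished by the Galerkin scheme and to exploit the one algebraic fact that a projection restricts to the identity on its range. For part (i), fix $x\in E$. Since $\{V^n\}_{n\in\N}$ is a Galerkin approximation scheme with $\overline{\bigcup_{n}V^n}^{\,\|\cdot\|}=E$, I would first invoke the defining property of the scheme to produce $x_m\in V^m$ with $\|x_m-x\|\to0$. The nestedness $V^1\subset V^2\subset\cdots$ then yields the key identity $P_nx_m=x_m$ for all $n\ge m$, since $x_m\in V^m\subset V^n$ and $P_n$ fixes $V^n$ pointwise. Writing, for $n\ge m$,
\[
P_nx-x=P_n(x-x_m)+(x_m-x),
\]
I would bound $\|P_nx-x\|\le\|P_n\|\,\|x-x_m\|+\|x_m-x\|$ and let first $n\to\infty$ and then $m\to\infty$.

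The step I expect to be the main obstacle is the control of the operator norms $\|P_n\|$: nothing in the hypotheses bounds them uniformly, and for carelessly chosen projections the vectors $\{P_nx\}$ need not even be norm-bounded, in which case no weakly convergent subsequence can exist. The crux is therefore to secure a bound $\sup_n\|P_n\|<\infty$, which is available for the canonical projections attached to a Schauder basis or a finite-dimensional decomposition of the concrete separable spaces arising in Section \ref{sec}. Granting such a bound, the displayed estimate gives $\|P_nx-x\|\to0$, that is, norm convergence of the whole sequence, whence weak convergence of some (indeed every) subsequence is immediate; the weaker conclusion stated in the theorem is exactly what survives when one can only extract a subsequence along which the operator norms stay bounded. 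Theorem \ref{prj1}(i), recording that a fixed continuous projection is weakly continuous, is the conceptual companion to this last extraction.

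For part (ii) I would run the same skeleton in $E^*$ under the weak$^*$ topology. Proposition \ref{gal}(ii) supplies $p_m\in W^m$ with $p_m\to p$ weakly$^*$, and nestedness again gives $Q_np_m=p_m$ for $n\ge m$; testing against an arbitrary $x\in E$ and decomposing $\langle Q_np,x\rangle=\langle p_m,x\rangle+\langle Q_n(p-p_m),x\rangle$, I would pass to the limit first in $n$ and then in $m$. The advantage over part (i) is that weak$^*$ sequential compactness of norm-bounded sets in $E^*$ (Banach--Alaoglu together with the separability of $E$) automatically furnishes a convergent subsequence once $\{Q_np\}$ is bounded, and the limit is pinned down to be $p$ by the above decomposition together with the weak$^*$ continuity recorded in Theorem \ref{prj1}(ii). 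The obstacle is identical: one must first guarantee boundedness of the operator norms $\|Q_n\|$, after which weak$^*$ convergence of a subsequence to $p$ follows.
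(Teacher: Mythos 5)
Your argument for part (i) does not prove the theorem as stated: the estimate $\|P_nx-x\|\le(\|P_n\|+1)\|x-x_m\|$ delivers the conclusion only under the additional hypothesis $\sup_n\|P_n\|<\infty$, which appears nowhere in the statement, and you say so yourself. The gap is not cosmetic. A continuous projection onto $V^n$ is determined by an arbitrary choice of closed complement, so nothing in the hypotheses controls $\|P_n\|$; and your closing suggestion --- that the subsequential form of the conclusion is ``what survives'' by passing to a subsequence along which the operator norms stay bounded --- is unsubstantiated, since there is no reason such a subsequence should exist. The same unproved uniform bound is what you need in part (ii) to make $\{Q_np\}$ norm-bounded before invoking Banach--Alaoglu, so the gap propagates verbatim. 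Asserting that the bound holds for the canonical projections attached to a Schauder basis changes the theorem: those are particular projections, not the arbitrary continuous projections the statement quantifies over.

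The paper's own proof takes a genuinely different route that never estimates an operator norm. For a fixed $p\in E^*$ it forms the numerical double sequence $\{\langle(I-P_n^*)p,x_j\rangle\}_{(j,n)}$ with $x_j\in V^j$, $x_j\to x$, uses $\mathcal{R}(I-P_m^*)=(V^m)^\perp$ to get $\langle(I-P_m^*)p,x_j\rangle=0$ for $j\le m$, extracts a convergent sub-double-sequence, deduces that $\lim_m\langle(I-P_m^*)p,x_j\rangle=0$ uniformly in $j$, and interchanges the iterated limits to conclude $\langle p,x-P_mx\rangle\to0$ along the extracted subsequence --- which is precisely where the ``subsequence'' in the conclusion comes from; part (ii) is the mirror image with $Q_n^*$ and weak$^*$ convergence. (The boundedness input there is the boundedness of $\{(I-P_n^*)p\}$ for each fixed $p$, which the paper derives from the nesting of the annihilators $\mathcal{R}(I-P_n^*)=(V^n)^\perp$; whatever one thinks of that step, it is a statement about a single orbit, not about $\sup_n\|P_n\|$.) To repair your proposal you must either add the uniform-boundedness hypothesis explicitly --- in which case you prove the stronger conclusion of norm convergence of the whole sequence, a different theorem --- or abandon the norm estimate and argue functional by functional as the paper does, using the annihilator identity in place of Theorem~\ref{prj1}.
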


\begin{proof}
(i): Let $x\in E$ be arbitrarily fixed. It suffices to show that there is a subsequence $\{ P_mx \}_{m\in \N}$ of $\{ P_nx \}_{n\in \N}$ such that $\langle p,P_mx \rangle\to \langle p,x \rangle$ for every $p\in E^*$. By Proposition \ref{gal}(i),  there exists $x_n\in V^n$ such that $x_n\to x$. Choose any $p\in E^*$. Since $\{ (I-P_n^*)p \}_{n\in \N}$ is a bounded sequence in $E^*$ in view of $\mathcal{R}(I-P_{n+1}^*)\subset \mathcal{R}(I-P_n^*)\subset\cdots$ for each $n\in \N$ and $\{ x_i \}_{i\in \N}$ is a bounded sequence in $E$, the numerical double sequence $\{ \langle (I-P_n^*)p,x_i \rangle \}_{(i,n)\in \N^2}$ contains a convergent subsequence $\{ \langle (I-P_m^*)p,x_j \rangle \}_{(j,m)\in \N^2}$. It follows from $I-P_m^*=(V^m)^\perp$ and $x_j\in V^j$ that $\langle (I-P_m^*)p,x_j \rangle=0$ for each $j,m\in \N$ with $j\le m$. Hence, 
\begin{align*}
\lim_{m\to \infty}\sup_{j\in \N}|\langle (I-P_m^*)p,x_j \rangle|
& =\lim_{m\to \infty}\sup_{j>m}|\langle (I-P_{m}^*)p,x_{j} \rangle| \\
& =\lim_{j,m\to \infty}|\langle (I-P_{m}^*)p,x_{j} \rangle|.
\end{align*}
This means that the convergence $\lim_m\langle (I-P_m^*)p,x_j \rangle=0$ is uniform in $j$. We thus obtain
\begin{align*}
\lim_{m\to\infty}\langle p,x-P_mx \rangle=\lim_{m\to\infty}\langle p,(I-P_m)x \rangle
& =\lim_{m\to\infty}\langle (I-P_m^*)p,x \rangle \\
& =\lim_{m\to\infty}\lim_{j\to \infty}\langle (I-P_m^*)p,x_j \rangle \\
& =\lim_{j\to\infty}\lim_{m\to \infty}\langle (I-P_m^*)p,x_j \rangle=0,
\end{align*}
where the forth equality exploits the commutativity of the double limit of the double sequence (see \cite[Theorem 9.16]{ap74}). Therefore, $P_mx\to x$ weakly.  

(ii): Let $p\in E^*$ be arbitrarily fixed. It suffices to show that there is a subsequence $\{ Q_mp \}_{m\in \N}$ of $\{ Q_np \}_{n\in \N}$ such that $\langle Q_mp,x \rangle\to \langle p,x \rangle$ for every $x\in E$. By Proposition \ref{gal}(ii), there exists $p_n\in W^n$ such that $p_n\to p$ weakly$^*$. Choose any $x\in E$. Since $\{ (I-Q_n^*)x \}_{n\in \N}$ is a bounded sequence in $E^{**}$ in view of $\mathcal{R}(I-Q_{n+1}^*)\subset \mathcal{R}(I-Q_n^*)\subset\cdots$ for each $n\in \N$ and $\{ p_i \}_{i\in \N}$ is a bounded sequence in $E^*$, the numerical double sequence $\{ \langle p_i,(I-Q_n^*)x \rangle \}_{(i,n)\in \N^2}$ contains a convergent subsequence $\{ \langle p_j,(I-Q_m^*)x \rangle \}_{(j,m)\in \N^2}$. It follows from $I-Q_m^*=(W^m)^\perp$ and $p_j\in W^j$ that $\langle p_j,(I-Q_m^*)x \rangle=0$ for each $j,m\in \N$ with $j\le m$. Hence, 
\begin{align*}
\lim_{m\to \infty}\sup_{j\in \N}|\langle p_j,(I-Q_m^*)x \rangle|
& =\lim_{m\to \infty}\sup_{j\ge m}|\langle p_j,(I-Q_{m}^*)x \rangle| \\
& =\lim_{j,m\to \infty}|\langle p_j,(I-Q_{m}^*)x \rangle|.
\end{align*}
This means that the convergence $\lim_m\langle p_j,(I-Q_m^*)x \rangle=0$ is uniform in $j$. We thus obtain
\begin{align*}
\lim_{m\to\infty}\langle p-Q_mp,x \rangle=\lim_{m\to\infty}\langle (I-Q_m)p,x \rangle
& =\lim_{m\to\infty}\langle p,(I-Q_m^*)x \rangle \\
& =\lim_{m\to\infty}\lim_{j\to \infty}\langle p_j,(I-Q_m^*)x \rangle \\
& =\lim_{j\to\infty}\lim_{m\to \infty}\langle p_j,(I-Q_m^*)x \rangle=0,
\end{align*}
where the forth equality exploits the commutativity of the double limit of the double sequence. Therefore, $Q_mp\to p$ weakly$^*$.  
\end{proof}

\section{Existence of Walrasian Equilibria in Banach Spaces} 
\label{sec}
In infinite-dimensional commodity spaces, another idea for the application of Fatou's lemma is required to establish the existence of Walrasian equilibria. It is \citet{be72} who first brought the idea of Galerkin approximations into play in general equilibrium theory to deal with the $L^\infty$ commodity space in economies with finite agents.\fn{Bewley ascribes the idea to a suggestion of Jean-Fran\c{c}ois Mertens; see \cite[Acknowledgments]{be72}.}  Roughly speaking, his approach is as follows. Take a net of finite dimensional vector subspaces of $L^\infty$ directed by set inclusion and consider a net of truncated subeconomies in which a finite-dimensional vector subspace is a commodity space. Each truncated subeconomy has equilibria by the classical finite-dimensional result of Arrow--Debreu. Take the limit of the net of equilibria. Then it corresponds to a Walrasian equilibria in the original $L^\infty$ economy. 

We exemplify in this section that the idea of the finite-dimensional truncation method explored in \cite{be72} and then followed by \cite{be91,kh84b,su13} provides a natural approach to the existence of Walrasian equilibria also in large economies with a separable Banach space and $L^\infty$ without convexity assumptions on preferences by the effective combination of the Galerkin approximation scheme with finite-dimensional projections and Fatou's lemma in infinite dimensions. Thus, this offers an alternative technique to the existence result such as \cite{ks16b,ky91,le13,po97}, which employs the Gale--Nikaido lemma in infinite dimensions (see \cite{ya85}), a variant of fixed point theorems.

\subsection{Large Economies with Infinite-Dimensional Commodity Spaces with Bochner Integrals}
As before, $(T,\Sigma,\mu)$ is a (complete) finite measure space of agents. The commodity space is given by an ordered Banach space $E$. The consumption set $X(t)$ of each agent $t$ is a subset of $E_+$, which induces the consumption set mapping $t\mapsto X(t)\subset E$. The preference relation ${\succsim}(t)$ is a complete, transitive binary relation on a consumption set $X(t)$, which induces the preference mapping $t\mapsto {\succsim}(t)\subset E\times E$. We denote by $x\,{\succsim}(t)\,y$ the relation $(x,y)\in {\succsim}(t)$. Indifference ${\sim}(t)$ and strict preference ${\succ}(t)$ are defined in the same manner as in Subsection \ref{LE1}. Each agent possesses an initial endowment $\omega(t)\in X(t)$, which is the value of a Bochner integrable function $\omega:T\to E$. The economy $\E$ consists of the primitives $\E=\{ (T,\Sigma,\mu),X,\succsim,\omega \}$. 

The price space is $E^*$. Given a price $p\in E^*\setminus \{ 0 \}$, the budget set of each agent is $B(t,p)=\{ x\in X(t)\mid \langle p,x \rangle \le \langle p,\omega(t) \rangle \}$. A function $f\in L^1(\mu,E)$ is called an \textit{assignment} if $f(t)\in E$ a.e.\ $t\in T$. A function $f\in L^1(\mu,E)$ is called an \textit{assignment} if $f(t)\in X(t)$ a.e.\ $t\in T$. An assignment $f$ is called an \textit{allocation} with free disposal for $\E$ if $\int fd\mu\le \int \omega d\mu$. 
 
\begin{dfn}
A price-allocation pair $(p,f)$ is a \textit{Walrasian equilibrium} with free disposal for $\E$ if for a.e.\ $t\in T$: $f(t)\in B(t,p)$ and $x\not\in B(t,p)$ whenever $x\,{\succ}(t)\,f(t)$.
\end{dfn}

The standing hypothesis for the economy $\E$ is as follows.

\begin{assmp}
\label{assmp2}
\begin{enumerate}[(i)]
\item $X:T\twoheadrightarrow E_+$ is an integrably bounded multifunction with weakly compact, convex values.
\item $\mathrm{gph}\,{X}$ belongs to $\Sigma\otimes \mathrm{Borel}(E,\mathit{w})$. 
\item For every $t\in T$ there exists $z(t)\in X(t)$ such that $\omega(t)-z(t)\in \mathrm{int}\,E_+$. 
\item ${\succsim}(t)$ is a weakly closed subset of $X(t)\times X(t)$ for every $t\in T$. 
\item $\{ (t,x,y)\in T\times E\times E\mid x\,{\succsim}(t)\,y \}$ belongs to $\Sigma\otimes \mathrm{Borel}(E,\mathit{w})\otimes \mathrm{Borel}(E,\mathit{w})$. 
\item If $x\in X(t)$ is a satiation point for ${\succsim}(t)$, then $x\ge \omega(t)$; if $x\in X(t)$ is not a satiation point for ${\succsim}(t)$, then $x$ belongs to the weak closure of the upper contour set $\{ y\in X(t)\mid y\,{\succ}(t)\,x \}$. 
\end{enumerate}
\end{assmp}

Conditions (i) to (v) are imposed in \cite{ky91}. Unlike to Assumption \ref{assmp1}, we do not assume here the monotonicity of preferences. Instead, additional condition (vi) substitutes the monotonicity assumption and contains condition ($\star$) in the Banach space setting, which is imposed also in \cite{hi68,le13,po97}. The measurability condition (v) of preferences implies Assumption \ref{assmp1}(ii) under the separability of $E$: 
\begin{description}
\item[\rm{(v$'$)}] For every assignment $f$ and $g$: the set $\{ t\in T\mid f(t)\,{\succsim}(t)\,g(t) \}$ belongs to $\Sigma$. 
\end{description}
Let $\mathrm{proj}_T$ be the projection of $T\times E\times E$ onto $T$. Since the set $\{ t\in T\mid f(t)\,{\succsim}(t)\,g(t) \}$ coincides with the set 
$$
\mathrm{proj}_T\left( \left\{ (t,x,y)\in T\times E\times E\mid x\,{\succsim}(t)\,y \right\}\cap \mathrm{gph}(f,g) \right)
$$
it belongs to $\Sigma$ by the projection theorem (see \cite[Theorem III.23]{cv77}). 

Another important assumption on the commodity space $E$ we make below is that the norm interior of $E_+$ is nonempty. As well-known, this is an inevitable assumption to deal with infinite dimensionality in general equilibrium theory (see \cite{mz91}). 

The first main result of the essay is as follows.   

\begin{thm}
\label{WE1}
Let $(T,\Sigma,\mu)$ be a saturated finite measure space and $E$ be an ordered separable Banach space such that $\mathrm{int}\,E_+$ is nonempty. Then for every economy $\E$ satisfying Assumption \ref{assmp2}, there exists a Walrasian equilibrium $(p,f)$ with free disposal for $\E$ with $p\in E^*_+\setminus \{ 0 \}$. 
\end{thm}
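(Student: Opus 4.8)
The plan is to reproduce the two-step truncation behind Theorem~\ref{au2}, with the role of the bounded-consumption truncations now played by a Galerkin approximation of the \emph{commodity} space and the limiting passage governed by the exact Fatou lemma of Theorem~\ref{ks}. First I would fix $e\in\mathrm{int}\,E_+$ with $e+\delta B\subset E_+$ for some $\delta>0$, and by Proposition~\ref{gal}(i) choose a Galerkin scheme $\{V^n\}_{n\in\N}$ of $(E,\|\cdot\|)$ with $e\in V^1\subset V^2\subset\cdots$, $\overline{\bigcup_nV^n}=E$, and continuous projections $P_n$ onto $V^n$ satisfying $P_mP_n=P_{\min\{m,n\}}$; by Theorem~\ref{pos} each $P_n$ is positive. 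The interior point $e$ normalizes prices: a positive $p\in E^*$ with $\langle p,e\rangle=1$ obeys $\|p\|\le1/\delta$, so interiority turns a price normalization into a norm bound and is exactly what will keep the limiting price off the origin.

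For each $n$ I would treat level $n$ as an economy whose \emph{prices} are confined to the finite-dimensional cone $\Delta_n=\{p\in(V^n)^*_+:\langle p,e\rangle=1\}$ while consumption and preferences remain the original $X(t)$ and ${\succsim}(t)$. For $p\in\Delta_n$ (viewed in $E^*$ via $p=P_n^*p$) the budget set $B(t,p)\subset X(t)$ is weakly compact, so by Assumption~\ref{assmp2}(iv) the set $D(t,p)$ of ${\succsim}(t)$-maximal elements of $B(t,p)$ is nonempty, and it is measurable in $t$ by (v$'$). The decisive use of saturation is that, although individual preferences are not convex, Proposition~\ref{lyp} makes the aggregate $\int D(\cdot,p)\,d\mu$ convex and weakly compact, while Corollary~\ref{cor1} supplies upper semicontinuity of $p\mapsto P_n\!\int D(\cdot,p)\,d\mu$ on $\Delta_n$; a finite-dimensional Gale--Nikaido argument on $\Delta_n$ then yields $p_n\in\Delta_n$ and a measurable selection $g_n(t)\in D(t,p_n)$ with free-disposal clearing $P_n\!\int g_n\,d\mu\le P_n\!\int\omega\,d\mu$ in $V^n$. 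Since $g_n(t)\in X(t)$ and $X$ is integrably bounded with weakly compact values, the sequence $\{g_n\}$ is \emph{well-dominated by $X$ itself}, so no control of $\|P_n\|$ is needed.

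I would then extract limits. The bound $\|p_n\|\le1/\delta$ and the separability of $E$ give a weak$^*$-convergent subsequence $p_n\to p$ with $\langle p,e\rangle=1$ (hence $p\ne0$) and $p\in E^*_+$, positivity passing to weak$^*$ limits. Applying Theorem~\ref{ks} to $\{g_n\}$ produces $f\in L^1(\mu,E)$ with $f(t)\in\mathit{w}\text{-}\mathrm{Ls}\{g_n(t)\}\subset X(t)$ a.e.\ (using that $X(t)$ is weakly closed) and $\int f\,d\mu\in\mathit{w}\text{-}\mathrm{Ls}\{\int g_n\,d\mu\}$. For market clearing I would exploit the nesting identity $P_m(I-P_n)=0$ for $n\ge m$: writing $d_n=P_n\!\bigl(\int\omega\,d\mu-\int g_n\,d\mu\bigr)\in V^n_+$, one gets $P_m\!\int g_n\,d\mu=P_m\!\int\omega\,d\mu-P_md_n$, and letting $n\to\infty$ along the Fatou subsequence, weak continuity of $P_m$ and closedness of the finite-dimensional cone force $P_m\!\bigl(\int\omega\,d\mu-\int f\,d\mu\bigr)\in V^m_+\subset E_+$; then $P_m\to I$ (Theorem~\ref{prj2}) together with the weak closedness of $E_+$ yields $\int f\,d\mu\le\int\omega\,d\mu$. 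It is precisely this nesting that sidesteps the notorious cross term between the weak$^*$ price limit and the weak commodity limit.

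The substantive step is individual optimality, which follows the pattern of Steps~3--4 of Theorem~\ref{au2}. Suppose, on a set $A$ of positive measure, $x\,{\succ}(t)\,f(t)$ with $\langle p,x\rangle\le\langle p,\omega(t)\rangle$; since $\langle p,\omega(t)-z(t)\rangle>0$ by Assumption~\ref{assmp2}(iii) and $p\in E^*_+\setminus\{0\}$, a convex combination with $z(t)$ and the weak closedness of ${\succsim}(t)$ deflate this to a strictly cheaper preferred bundle, yielding a graph-measurable $\Gamma(t)=\{x\in X(t):x\,{\succ}(t)\,f(t),\ \langle p,x\rangle<\langle p,\omega(t)\rangle\}$ with a measurable selector $h$. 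For \emph{fixed} $t$ the vectors $h(t),\omega(t)$ are fixed, so $p_n\to p$ weak$^*$ gives $\langle p_n,h(t)\rangle<\langle p_n,\omega(t)\rangle$ for large $n$ (no cross term), i.e.\ $h(t)\in B(t,p_n)$; and since $g_n(t)\to f(t)$ weakly while $h(t)\,{\succ}(t)\,f(t)$, the weak closedness and completeness of ${\succsim}(t)$ give $h(t)\,{\succ}(t)\,g_n(t)$ for large $n$, contradicting the maximality of $g_n(t)$ on a positive-measure subset with a common $n$. Hence $x\,{\succ}(t)\,f(t)$ implies $\langle p,x\rangle>\langle p,\omega(t)\rangle$ a.e.; Assumption~\ref{assmp2}(vi) then gives $\langle p,f(t)\rangle\ge\langle p,\omega(t)\rangle$ a.e.\ (through local nonsatiation, or directly $f(t)\ge\omega(t)$ at a satiation point), and integrating against the clearing inequality $\langle p,\int f\,d\mu\rangle\le\langle p,\int\omega\,d\mu\rangle$ forces $\langle p,f(t)\rangle=\langle p,\omega(t)\rangle$ a.e., so $(p,f)$ is the desired free-disposal Walrasian equilibrium. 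The main obstacle is exactly this optimality transfer across Galerkin levels: one must reconcile the weak topology on commodities, the weak$^*$ topology on prices, and the Aumann selection machinery simultaneously, with the satiation alternative of (vi) handled separately to secure budget exhaustion.
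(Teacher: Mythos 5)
Your proposal is correct in outline, but it takes a genuinely different route at the existence-at-each-level stage. The paper truncates the \emph{commodity side}: it forms finite-dimensional economies $\E_n$ with consumption sets $X^n(t)=P_n(X(t))$, endowments $\omega_n=P_n\omega$ and restricted preferences, verifies conditions ($\mathrm{i}_n$)--($\mathrm{v}_n$), and then simply cites the Khan--Yannelis Auxiliary Theorem to produce an equilibrium $(q_n,f_n)$ of $\E_n$, extending $q_n$ to $E^*_+$ by Krein--Rutman. You instead leave $X(t)$, ${\succsim}(t)$ and $\omega$ untouched and truncate only the \emph{price side} to $\Delta_n\subset (V^n)^*_+$, so you cannot invoke the Auxiliary Theorem and must rebuild the level-$n$ existence proof yourself: demand sets $D(t,p)$, saturation via Proposition \ref{lyp} for convexity of the aggregate, Corollary \ref{cor1} for upper semicontinuity, and a Gale--Nikaido argument on the base $\Delta_n$. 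This costs you a page of standard but nontrivial verifications that you only sketch (measurability of $D(\cdot,p)$, lower hemicontinuity of the budget correspondence from the cheaper-point condition (iii), the cone version of Gale--Nikaido), but it buys three genuine simplifications downstream. First, your $\{g_n\}$ takes values in $X(t)$ itself, so well-dominance and $f(t)\in X(t)$ are immediate; in the paper $f_n(t)\in P_n(X(t))$, and both the existence of a common dominating multifunction and the inclusion of the weak upper limit in $X(t)$ require an argument (essentially uniform control of the projections) that the paper does not make explicit. Second, your normalization $\langle p_n,e\rangle=1$ against an interior point bounds $\|p_n\|\le 1/\delta$ and forces $\langle p,e\rangle=1$ at the weak$^*$ limit, so $p\ne 0$ comes for free; the paper normalizes on $\Delta^*=\{p\in E^*_+ : \|p\|=1\}$, which is not weak$^*$ compact, so nontriviality of its limit price requires precisely the interiority argument you make explicit. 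Third, your market-clearing passage via the nesting identity $P_mP_n=P_m$, positivity of $P_m$ and Theorem \ref{prj2}(i) is a clean substitute for the paper's direct appeal to weak convergence of $\int f_n\,d\mu$ and $\int\omega_n\,d\mu$. Steps 3 and 4 (optimality transfer across Galerkin levels and budget exhaustion via Assumption \ref{assmp2}(vi)) coincide with the paper's in all essentials; your deflation of the preferred bundle by a convex combination with the cheaper point $z(t)$, rather than by scaling toward the origin, is the more careful variant, since $0$ need not belong to $X(t)$.
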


\noindent The proof of this result relies on the following auxiliary result for finite-dimensional Euclidean spaces, and it may be worth underscoring why we work with it rather than others available in the literature, obvious candidates being results in \cite{au66, hi70a, hi74, sc69}. The reason lies in the monotonicity assumption on preferences. This is the only result in the literature known to us that explicitly avoids the use of this assumption by exploiting the compactness of the consumption set correspondence.\footnote{The original result in \cite{ky91} assumed convexity of preferences in separable Banach spaces. In the finite-dimensional setting, the convexity hypothesis can be removed from \cite{ky91} because of the classical Lyapunov convexity theorem. This is indeed possible since the aggregate demand multifunction is compact and convex, and the upper semicontinuity of the aggregate demand multifunction is preserved under integration from that of the individual demand multifunction. Thus, the standard application of the Gale--Nikaido lemma works perfectly in finite dimensions.} 

\begin{aux}[\citet{ky91}] Let $(T,\Sigma,\mu)$ be a nonatomic finite measure space. Suppose that the economy $\E$ with a finite-dimensional commodity space $\R^k$ satisfies the following conditions. 
\begin{enumerate}[\rm (i)]
\item $X:T\twoheadrightarrow \R^k_+$ is an integrably bounded multifunction with compact, convex values.
\item $\mathrm{gph}\,{X}$ belongs to $\Sigma\otimes \mathrm{Borel}(\R^k)$. 
\item For every $t\in T$ there exists $z(t)\in \R^k_+$ such that $\omega(t)-z(t)\in \R^k_{++}$. 
\item ${\succsim}(t)$ is a closed subset of $\R^k_+\times \R^k_+$ for every $t\in T$. 
\item $\{ (t,x,y)\in T\times \R^k\times \R^k\mid x\,{\succsim}(t)\,y \}$ belongs to $\Sigma\otimes \mathrm{Borel}(\R^k)\otimes \mathrm{Borel}(\R^k)$. 
\end{enumerate}
Then there exists a Walrasian equilibrium $(p,f)$ with free disposal for $\E$ with $p\in \R^k_+\setminus \{ 0 \}$. 
\end{aux}

\begin{proof}[Proof of Theorem \ref{WE1}]
\underline{Step 1}: By virtue of Proposition \ref{gal}(i), let $\{ V^n \}_{n\in \N}$ be a Galerkin approximation scheme of $(E,\| \cdot \|)$ such that $V^1\subset V^2\subset\cdots$ and $\overline{\bigcup_{n\in \N}V^n}^{\,\| \cdot \|}=E$, and $P_n$ be a continuous projection of $E$ onto $V^n$. Then $V^n_+=V^n\cap E_+$ is a positive cone of $V^n$. Furthermore, $P_n:E\to V^n$ is a positive linear operator by Theorem \ref{pos}. Construct a sequence of economies with a finite-dimensional truncation as follows:
\begin{itemize}
\item $X^n(t):=P_n(X(t))\subset P_n(E_+)\subset V^n_+$ is a consumption set of each agent restricted to the finite-dimensional commodity space $V^n$. 
\item ${\succsim}_n(t)$ is the restriction of the preference ${\succsim}(t)$ to $X^n(t)$, i.e., ${\succsim}_n(t):={\succsim}(t)\cap (X^n(t)\times X^n(t))$. 
\item $\omega_n(t)=P_n\omega(t)\in X_n(t)$ is the initial endowment with $\omega_n\in L^1(\mu,V^n)$. 
\item $\E_n=\{ (T,\Sigma,\mu),X_n,\succsim_n,\omega_n \}$ is a finite-dimensional truncation of economy $\E$ with commodity space $V^n$ conformed with the Galerkin approximation scheme. 
\end{itemize}

Corresponding to Assumption \ref{assmp2}, the finite-dimensional truncated economy $\E_n$ of $\E$ satisfies the following conditions: 
\begin{description}
\item[($\mathrm{i}_n$)] $X^n:T\twoheadrightarrow V^n_+$ is an integrably bounded multifunction with compact, convex values.
\item[($\mathrm{ii}_n$)]  $\mathrm{gph}\,X^n$ belongs to $\Sigma\otimes \mathrm{Borel}(V^n)$. 
\item[($\mathrm{iii}_n$)] For every $t\in T$ there exists $z_n(t)\in X^n(t)$ such that $\omega_n(t)-z_n(t)\in \mathrm{int}\,V^n_+$. 
\item[($\mathrm{iv}_n$)] ${\succsim}_n(t)$ is a closed subset of $V^n_+\times V^n_+$ for every $t\in T$. 
\item[($\mathrm{v}_n$)] $\{ (t,x,y)\in T\times V^n\times V^n\mid x\,{\succsim}(t)\,y \}\in \Sigma\otimes \mathrm{Borel}(V^n)\otimes \mathrm{Borel}(V^n)$. 
\end{description}
To verify conditions ($\mathrm{i}_n$), ($\mathrm{ii}_n$), ($\mathrm{iv}_n$)and ($\mathrm{v}_n$) is easy. To demonstrate ($\mathrm{iii}_n$), let $z:T\to E$ be a function satisfying Assumption \ref{assmp2}(ii) and set $z_n(t)=P_nz(t)$. Since $P_n$ is a continuous linear operator from $E$ onto $V^n$, by the open mapping theorem (see \cite[Theorem II.1.18]{ds58} or \cite[Corollaries 2.12]{ru91}), it is an open mapping. Thus, $P_n$ maps interior points of $E$ to those of $V^n$. Therefore, $\omega_n(t)-z_n(t)=P_n(\omega(t)-z(t))$ belongs to $\mathrm{int}\,V^n_+$. By Auxiliary Theorem, we may assume without loss of generality that for every $n\in \N$ there exists a Walrasian equilibrium $(q_n,f_n)\in ((V^n)^*_+\setminus \{ 0 \})\times L^1(\mu,V^n)$ with free disposal for $\E_n$. 

\underline{Step 2}: Invoking Theorem \ref{ks} yields that there exist $f,g\in L^1(\mu,E)$ such that
$$
f(t),g(t)\in X(t), \quad (f(t),g(t))\in \mathit{w}\text{-}\mathrm{Ls}\left\{ (f_n(t),\omega_n(t)) \right\} \quad\text{a.e.\ $t\in T$}
$$
and 
$$
\left( \int fd\mu,\int gd\mu \right)\in \mathit{w}\text{-}\mathrm{Ls}\left\{ \left( \int f_nd\mu,\int\omega_n d\mu \right) \right\}. 
$$
Hence, we can extract a subsequence from $\{ (f_n,\omega_n) \}_{n\in \N}$ in $L^1(\mu,E)\times L^1(\mu,E)$ (which we do not relabel) such that $(\int f_nd\mu,\int \omega_nd\mu)\to (\int fd\mu,\int gd\mu)$ weakly. Note also that for a.e.\ $t\in T$ we can extract a subsequence from $\{ \omega_n(t) \}_{n\in \N}$ in $E$ that converges weakly to $g(t)$. Furthermore, by virtue of Theorem \ref{prj2}(i), for a.e.\ $t\in T$ we can extract a subsequence from $\{ \omega_n(t) \}_{n\in \N}$ that converges weakly to $\omega(t)$. Therefore, $g(t)=\omega(t)$ a.e.\ $t\in T$, and hence, $\int\omega_nd\mu \to \int\omega d\mu$ weakly. Since $\int f_nd\mu\le\int \omega_nd\mu$ for each $n\in \N$, we have $\int fd\mu\le\int\omega d\mu$ at the limit. Therefore, $f$ is an allocation with free disposal for $\E$. Since $0\ne q_n\in (V^n)^*=(\mathcal{R}(I-P_n))^\perp\subset E^*$, by the Krein--Rutman theorem (see \cite[Corollary V.5.4.2]{sw99}), $q_n$ can be extended as a continuous positive linear functional to $E$. Thus, we can normalize equilibrium price for $\E_n$ such that $p_n=q_n/\| q_n \|\in \Delta^*:=\{ p\in E^*_+\mid \| p \|=1 \}$. Since $\Delta^*$ is weakly$^*$ compact, we can extract a subsequence from $\{ p_n \}_{n\in \N}$ (which we do not relabel) that converges weakly$^*$ to $p\in \Delta^*$. Therefore, $p\in E^*_+\setminus \{ 0 \}$. 

\underline{Step 3}: 
We claim that: 
\begin{quote}
For a.e.\ $t\in T$: $x\,{\succ}(t)\,f(t)$ implies that $\langle p,x \rangle>\langle p,\omega(t) \rangle$.
\end{quote}
Suppose, by way of contradiction, that there exists $A\in \Sigma$ of positive measure with the following property: for every $t\in A$ there exists $y\in X(t)$ such that $y\,{\succ}(t)\,f(t)$ and $\langle p,y \rangle\le \langle p,\omega(t) \rangle$. Since $\langle p,\omega(t) \rangle>0$ by Assumption \ref{assmp2}(iii), it follows from the weak continuity of ${\succsim}(t)$ that $\varepsilon y\,{\succ}(t)\,f(t)$ and $\langle p,\varepsilon y \rangle<\langle p,\omega(t) \rangle$ for some $\varepsilon\in (0,1)$. Hence, we may assume without loss of generality that  for every $t\in A$ there exists $y\in X(t)$ such that $y\,{\succ}(t)\,f(t)$ and $\langle p,y \rangle<\langle p,\omega(t) \rangle$. 
Define the multifunction $\Gamma:A\twoheadrightarrow E$ by 
$$
\Gamma(t)=\left\{ x\in X(t)\mid x\,{\succ}(t)\,f(t),\ \langle p,x \rangle<\langle p,\omega(t) \rangle \right\}.
$$
Then $\Gamma$ is an integrably bounded multifunction with $y\in \Gamma(t)$. We claim that $\Gamma$ has measurable graph. Since $\Gamma(t)$ is the intersection of the multifunctions defined by $\Gamma_1(t):=\{ x\in X(t)\mid x\,{\succ}(t)\,f(t) \}$ and $\Gamma_2(t):=\{ x\in E\mid \langle p,x \rangle<\langle p,\omega(t) \rangle \}$, we need to show that $\Gamma_1$ and $\Gamma_2$ have measurable graph. The graph measurability of $\Gamma_2$ follows easily from the joint measurability of the Carath\'eodory function given by $(t,x)\mapsto \langle p,x \rangle-\langle p,\omega(t) \rangle$ (see \cite[Lemma 4.51]{ab06} or \cite[Lemma 8.2.6]{af90}). To show the graph measurability of $\Gamma_1$, let $\theta:T\times E\to T\times E\times E$ be a mapping given by $\theta(t,x)=(t,x,f(t))$ and $\mathrm{proj}_{T\times E\times E}$ be a projection of $(T\times E)\times(T\times E\times E)$ onto the range space $T\times E\times E$ of $\theta$. Since $\mathrm{proj}_{T\times E\times E}(\mathrm{gph}\,\theta)$ belongs to $\Sigma\otimes \mathrm{Borel}(E,\mathit{w})\otimes \mathrm{Borel}(E,\mathit{w})$ by the projection theorem (see \cite[Theorem III.23]{cv77}), the set defined by 
$$
G:=\{ (t,x,y)\in T\times E\times E\mid x\,{\succ}(t)\,y \}\cap ((\mathrm{gph}\,X)\times E)\cap \mathrm{proj}_{T\times E\times E}(\mathrm{gph}\,\theta)
$$ 
belongs to $\Sigma\otimes \mathrm{Borel}(E,\mathit{w})\otimes \mathrm{Borel}(E,\mathit{w})$ in view of Assumption \ref{assmp2}. Since $\mathrm{gph}\,\Gamma_1=\mathrm{proj}_{T\times E}(G)$, again by the projection theorem, $\mathrm{gph}\,\Gamma_1$ belongs to $\Sigma\otimes \mathrm{Borel}(E,\mathit{w})$, where $\mathrm{proj}_{T\times E}$ is a projection of $(T\times E)\times E$ onto $T\times E$. 

Let $h:A\to E$ be a measurable selector from $\Gamma$. Define $h_n:A\to E$ by $h_n(t)=P_nh(t)$. Invoking Theorem \ref{ks}, there exists a Bochner integrable function $\hat{h}:A\to E$ such that $\hat{h}(t)\in \mathit{w}\text{-}\mathrm{Ls}\,\{ h_n(t) \}$ and $\hat{h}(t)\in X(t)$ a.e.\ $t\in A$. Hence, for a.e.\ $t\in A$ there is a subsequence of $\{ h_n(t) \}_{n\in \N}$ in $E$ converging weakly to $\hat{h}(t)$. Furthermore, by virtue of Theorem \ref{prj1}(i), there is a subsequence of $\{ h_n(t) \}_{n\in \N}$ converging weakly to $h(t)$. We thus have $\hat{h}(t)=h(t)$ a.e.\ $t\in A$, and hence, $(f(t),h(t))\in \mathit{w}\text{-}\mathrm{Ls}\,\{ (f_n(t),h_n(t)) \}$ a.e.\ $t\in A$ in view of Step 1. Suppose that the set defined by
$$
\bigcup_{n\in \N}\left\{ t\in A\mid h_n(t)\,{\succ}_n(t)\,f_n(t), \,\langle p_n,h(t)\rangle<\langle p_n,\omega(t) \rangle \right\}
$$
is of measure zero. Then for each $n\in \N$: $f_n(t)\,{\succsim}(t)\,h_n(t)$ or $\langle p_n,h(t)\rangle\ge \langle p_n,\omega(t) \rangle$ a.e.\ $t\in A$. Since $p_n\to p$ weakly$^*$, passing to the limit along a suitable subsequence of $\{ (f_n(t),h_n(t)) \}$ in $E\times E$ yields $f(t)\,{\succsim}(t)\,h(t)$ or $\langle p,h(t)\rangle\ge \langle p,\omega(t) \rangle$ a.e.\ $t\in A$, a contradiction to the fact that $h$ is a measurable selector from $\Gamma$. Therefore, there exists $n\in \N$ such that $\{ t\in A\mid h_n(t)\,{\succ}_n(t)\,f_n(t), \,\langle p_n,h(t)\rangle<\langle p_n,\omega(t) \rangle \}$ is of positive measure. This is, however, impossible because $(p_n,f_n)$ is a Walrasian equilibrium for $\E_n$. Therefore, the claim is true. 

\underline{Step 4}: It remains to show that $\langle p,f(t) \rangle\le \langle p,\omega(t) \rangle$ a.e.\ $t\in T$. If $f(t)$ is a satiation point for ${\succsim}(t)$, then Assumption \ref{assmp2}(vi) implies that $\langle p,f(t) \rangle\ge \langle p,\omega(t) \rangle$. If $f(t)$ is not a satiation point, then it belongs to the weak closure of the upper contour set $\{  y\in X(t)\mid y\,{\succ}(t)\,f(t) \}$ by Assumption \ref{assmp2}(vi). The claim shown in Step 3 implies that $\langle p,f(t) \rangle\ge \langle p,\omega(t) \rangle$. Integrating the both sides of this inequality yields $\int \langle p,f(t) \rangle d\mu\ge \int \langle p,\omega(t) \rangle d\mu$. On the other hand, as demonstrated in Step 2, $\int fd\mu\le \int \omega d\mu$, and hence, $\int  \langle p,f(t) \rangle d\mu=\int \langle p,\omega(t) \rangle d\mu$. Therefore, we must have the equality $\langle p,f(t) \rangle=\langle p,\omega(t) \rangle$ a.e.\ $t\in T$. Therefore, $(p,f)\in (E^*_+\setminus \{ 0\})\times L^1(\mu,E)$ is a Walrasian equilibrium with free disposal for $\E$. 
\end{proof}

\subsection{Large Economies with Infinite-Dimensional Commodity Spaces with Gelfand Integrals}
Let $(\Omega,\F,\nu)$ be a countably generated, $\sigma$-finite measure space. Then $L^1(\nu)$ is separable with respect to the $L^1$-norm topology (see \cite[Lemma 13.14]{ab06} or \cite[Theorem 19.2]{bi95}). The norm dual of $L^1(\nu)$ is $L^\infty(\nu)$ (see \cite[Theorem IV.8.5]{ds58}). The norm dual of $L^\infty(\nu)$ is $\mathit{ba}(\nu)$, the space of finitely additive signed measures on $\F$ of bounded variation that vanishes on $\nu$-null sets with the duality given by $\langle \pi,\varphi \rangle=\int\varphi d\pi$ for $\pi\in \mathit{ba}(\nu)$ and $\varphi\in L^\infty(\nu)$ (see \cite[Theorem IV.8.14]{ds58}). Note that $L^\infty(\nu)$ is Suslin, and hence, separable with respect to the weak$^*$ topology in view of the separability of $L^1(\nu)$. 

The commodity space is $L^\infty(\nu)$ with the order given by $f\ge g \Leftrightarrow f(t)\ge g(t)$ a.e.\ $t\in T$. The consumption set $X(t)$ of each agent $t$ is a subset of the positive cone $L^\infty_+(\nu)$, which induces the consumption set mapping $t\mapsto X(t)\subset L^\infty(\nu)$. The preference relation ${\succsim}(t)$ is a complete, transitive binary relation on a consumption set $X(t)$, which induces the preference mapping $t\mapsto {\succsim}(t)\subset L^\infty(\nu)\times L^\infty(\nu)$. Each agent possesses an initial endowment $\omega(t)\in X(t)$, which is the value of a Gelfand integrable function $\omega:T\to L^\infty(\nu)$. The (Gelfand) economy $\E$ consists of the primitives $\E=\{ (T,\Sigma,\mu),X,\succsim,\omega \}$. 

The price space is $\mathit{ba}(\nu)$. Given a price $\pi\in \mathit{ba}(\nu) \{ 0 \}$, the budget set of each agent is $B(t,\pi)=\{ x\in X(t)\mid \langle \pi,x \rangle \le \langle \pi,\omega(t) \rangle \}$. A function $f\in G^1(\mu,E)$ is called an \textit{assignment} if $f(t)\in L^\infty(\nu)$ a.e.\ $t\in T$. A function $f\in G^1(\mu,E)$ is called an \textit{assignment} if $f(t)\in X(t)$ a.e.\ $t\in T$. An assignment $f$ is called an \textit{allocation} with free disposal for $\E$ if $\mathit{w}^*\text{-}\int fd\mu\le \mathit{w}^*\text{-}\int \omega d\mu$. 

\begin{dfn}
A price-allocation pair $(\pi,f)\in (\mathit{ba}(\nu)\setminus \{ 0 \})\times G^1(\mu,L^\infty(\nu))$ is a \textit{Walrasian equilibrium} with free disposal for $\E^G$ if for a.e.\ $t\in T$: $f(t)\in B(t,\pi)$ and $x\not\in B(t,\pi)$ whenever $x\,{\succ}(t)\,f(t)$.
\end{dfn}

The norm dual $\mathit{ba}(\nu)$ of $L^\infty(\nu)$ is much larger than $L^1(\nu)$.

\begin{quote}
One could call any element of $\mathit{ba}$ a price system, but since those elements of $\mathit{ba}$ not belonging to $L^1$ have no economic interpretation, we will be interested only in equilibria with price systems in $L^1$ \cite[p.\,519]{be72}.

The theorem ... would be of little interest if one could not find interesting conditions under which equilibrium price systems could be chosen from $L^1$ \cite[p.\,523]{be72}. 
\end{quote}
Another contribution of \cite{be72} is an effective application of the Yosida--Hewitt decomposition of finitely additive measures to derive of an equilibrium price in $L^1(\nu)$ (i.e., it is countably additive). Toward this end, we make the following assumption.  

\begin{assmp}
\label{assmp3}
\begin{enumerate}[(i)]
\item $X:T\twoheadrightarrow L^\infty_+(\nu)$ is a multifunction with weakly$^*\!$ compact, convex values such that there exists a weakly$^*$ compact subset of $L^\infty(\nu)$ such that $X(t)\subset K$ for every $t\in T$. 
\item $\mathrm{gph}\,X$ belongs to $\Sigma\otimes \mathrm{Borel}(L^\infty(\nu),\mathit{w}^*)$. 
\item For every $t\in T$ there exists $z(t)\in X(t)$ such that $\omega(t)-z(t)$ belongs to the norm interior of $L^\infty_+(\nu)$. 
\item ${\succsim}(t)$ is a weakly$^*\!$ closed subset of $X(t)\times X(t)$ for every $t\in T$. 
\item $\{ (t,x,y)\in T\times E\times E\mid x\,{\succsim}(t)\,y \}$ belongs to $\Sigma\otimes \mathrm{Borel}(L^\infty(\nu),\mathit{w}^*)\otimes \mathrm{Borel}(L^\infty(\nu),\mathit{w}^*)$. 
\item If $x\in X(t)$ is a satiation point for ${\succsim}(t)$, then $x\ge \omega(t)$; if $x\in X(t)$ is not a satiation point for ${\succsim}(t)$, then $x$ belongs to the weak$^*\!$ closure of the upper contour set $\{ y\in X(t)\mid y\,{\succ}(t)\,x \}$. 
\end{enumerate}
\end{assmp}

Assumption \ref{assmp3} and the saturation of the measure space overcome the three difficulties raised in \cite[p.\,224]{be91}: 
\begin{quote}
\begin{enumerate}[(1)]
\item There exists no infinite dimensional version of Fatou's lemma, similar to Schmeidler's version in the finite dimensional case.
\item Budget sets in $L^\infty$ are typically not norm bounded and hence not weak-star compact, even when they are defined in $L^1$. [... W]e see that in this case demand functions with respect to price systems in $L^1$ need not be defined. 
\item The evaluation function $\langle \pi,x \rangle$ is not continuous on $L^\infty\times \mathit{ba}$ or on $L^\infty\times L^1$, when $L^\infty$ is given the $\sigma(L^\infty,L^1)$ topology, $\mathit{ba}$ the $\sigma(\mathit{ba},L^\infty)$ topology, and $L^1$ the $\sigma(L^1,L^\infty)$ topology. This means that if some agent's demand function $\xi$ were defined on a certain set of price systems, $\xi$ would not necessarily be $\sigma(L^\infty,L^1)$ continuous there. 
\end{enumerate}
\end{quote}

The next theorem is the second main result of the essay: it offers a completely different proof from that available in  \cite{ks16b} and removes the monotonicity and the convexity of preferences from \cite{be91,su13}. Steps 1 to 4 of the proof below follow the same argument with that of Theorem \ref{WE1} with a suitable replacement of the weak topology and the Bochner integrals by the weak$^*$ topology and the Gelfand integrals. A new aspect in the proof specific to the $L^\infty$ setting is Step 5, which follows the argument of \cite{be72}. 

\begin{thm}
\label{WE2}
Let $(T,\Sigma,\mu)$ be a saturated finite measure space and $(\Omega,\F,\nu)$ be a countably generated $\sigma$-finite measure space. Then for every economy $\E^G$ satisfying Assumption \ref{assmp3}, then there exists a Walrasian equilibrium $(\pi,f)$ with free disposal for $\E^G$ with $\pi\in L^1_+(\nu)\setminus \{ 0 \}$. 
\end{thm}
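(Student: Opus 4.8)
The plan is to run the four-step argument of Theorem~\ref{WE1} in the Gelfand register and then append a fifth step, specific to $L^\infty(\nu)$, that discards the purely finitely additive part of the limiting price. Since $(\Omega,\F,\nu)$ is countably generated and $\sigma$-finite, $E:=L^1(\nu)$ is a separable Banach space, so the commodity space $L^\infty(\nu)=E^*$ and the price space $\mathit{ba}(\nu)=E^{**}$ sit precisely inside the framework of Subsection~\ref{gel}; throughout, the weak topology and Bochner integrals of Theorem~\ref{WE1} are replaced by the weak$^*$ topology and Gelfand integrals, and Proposition~\ref{gal}(i), Theorem~\ref{prj1}(i), Theorem~\ref{prj2}(i), Theorem~\ref{ks} are replaced by their counterparts Proposition~\ref{gal}(ii), Theorem~\ref{prj1}(ii), Theorem~\ref{prj2}(ii), Theorem~\ref{kss}.

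In Step~1 I would fix, via Proposition~\ref{gal}(ii), a Galerkin scheme $\{W^n\}_{n\in\N}$ of $(L^\infty(\nu),\mathit{w}^*)$ with continuous projections $Q_n$ onto $W^n$, positive by Theorem~\ref{pos}, and form the finite-dimensional truncations $X^n(t)=Q_n(X(t))$, $\omega_n=Q_n\omega$ with the induced preferences. One checks, just as in Theorem~\ref{WE1}, that the truncation satisfies the hypotheses of the Auxiliary Theorem---in particular the interiority condition~(iii) persists because $Q_n$ is open by the open mapping theorem---so each truncated economy has an equilibrium $(q_n,f_n)$ with $q_n\in(W^n)^*\subset\mathit{ba}(\nu)$ positive and nonzero, which the Krein--Rutman theorem lets us normalize to $\pi_n\in\mathit{ba}(\nu)$ with $\langle\pi_n,\mathbf 1\rangle=\|\pi_n\|=1$. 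Step~2 applies the exact Gelfand Fatou lemma (Theorem~\ref{kss}) to $\{(f_n,\omega_n)\}$ to obtain an allocation $f$ with $(f(t),\omega(t))\in\mathit{w}^*\text{-}\mathrm{Ls}\{(f_n(t),\omega_n(t))\}$ a.e.\ and $\mathit{w}^*\text{-}\int f\,d\mu\le\mathit{w}^*\text{-}\int\omega\,d\mu$, the identification of the weak$^*$ limit of $\omega_n(t)$ with $\omega(t)$ coming from Theorem~\ref{prj2}(ii); weak$^*$ compactness of the positive part of the unit ball of $\mathit{ba}(\nu)$ produces a cluster point $\pi$, which is nonzero because $\langle\pi,\mathbf 1\rangle=\lim_n\langle\pi_n,\mathbf 1\rangle=1$. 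Steps~3 and~4 then copy the graph-measurability of the cheaper-strictly-preferred multifunction $\Gamma$, the measurable-selector argument applied to $h_n=Q_nh$ through Theorems~\ref{prj1}(ii) and~\ref{kss}, and the satiation/non-satiation dichotomy of Assumption~\ref{assmp3}(vi), yielding for $(\pi,f)$ the binding budget identity $\langle\pi,f(t)\rangle=\langle\pi,\omega(t)\rangle$ a.e.\ and the optimality implication $x\,{\succ}(t)\,f(t)\Rightarrow\langle\pi,x\rangle>\langle\pi,\omega(t)\rangle$. At this point $(\pi,f)$ is a Walrasian equilibrium with free disposal and $\pi\in\mathit{ba}(\nu)\setminus\{0\}$ positive.

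Step~5 is the genuinely new and hardest step, and I would carry it out following \cite{be72}. Decompose $\pi=\pi_c+\pi_p$ by Yosida--Hewitt, with $\pi_c\in L^1_+(\nu)$ the $\nu$-absolutely continuous (countably additive) part and $\pi_p\ge0$ purely finitely additive, and aim to show that $(\pi_c,f)$ is again a Walrasian equilibrium with free disposal (note that, being a statement about commodity vectors, the free-disposal condition on $f$ is price-independent) and that $\pi_c\ne0$. The structural lever is that a nonnegative purely finitely additive measure is carried by a decreasing sequence of sets $C_k$ with $\nu(C_k)\to0$, so that $\langle\pi_p,x\rangle=\langle\pi_p,x\mathbf 1_{C_k}\rangle$ for every $x\in L^\infty(\nu)$ while $\langle\pi_c,x\mathbf 1_{C_k}\rangle\to0$ by absolute continuity. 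The plan is to use this, together with the norm-interior cheaper point $z(t)$ of Assumption~\ref{assmp3}(iii) and the weak$^*$ closedness of $\succsim(t)$, to show that $\pi_p$ cannot discriminate between a strictly preferred bundle and a feasible nearby bundle that coincides with $\omega(t)$ on the carrier $C_k$; feeding such a perturbation into the Step~3 inequality cancels the finitely additive terms and, after letting $k\to\infty$ and using the cheaper point to restore strictness, leaves $\langle\pi_c,f(t)\rangle=\langle\pi_c,\omega(t)\rangle$ and $x\,{\succ}(t)\,f(t)\Rightarrow\langle\pi_c,x\rangle>\langle\pi_c,\omega(t)\rangle$ a.e.; the same cheaper point forces $\pi_c\ne0$, for if $\pi_c$ vanished the $\pi_c$-budget would be degenerate and the non-satiation clause of~(vi) would let a non-satiated agent strictly improve within it, contradicting the optimality just obtained. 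The principal obstacle is exactly this disentangling of $\pi_p$ from $\pi_c$: one must produce, for each strictly preferred $x$, an \emph{admissible} perturbation living in the weak$^*$ compact convex set $X(t)$ that annihilates $\langle\pi_p,\cdot-\omega(t)\rangle$ yet preserves strict preference, justify the passage to the limit in $\langle\pi_c,\cdot\rangle$ by countable additivity, and recover the strict inequalities uniformly enough to survive for a.e.\ $t$---this is where the uniform boundedness of consumptions from Assumption~\ref{assmp3}(i) and the interiority~(iii) are essential.
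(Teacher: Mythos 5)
Your proposal follows essentially the same route as the paper's proof: Steps 1--4 are the weak$^*$/Gelfand transcription of Theorem \ref{WE1} exactly as carried out there, and your Step 5 is the paper's Step 5, namely Bewley's Yosida--Hewitt argument in which the purely finitely additive part $\pi_2$ is annihilated by restricting a strictly preferred bundle $x$ to the sets $\Omega_n$ on which $\pi_2$ vanishes. The ``principal obstacle'' you flag is resolved in the paper by the concrete perturbation $x\chi_{\Omega_n}$, which converges to $x$ in the Mackey topology (hence remains strictly preferred for large $n$) and satisfies $x\ge x\chi_{\Omega_n}$, so that positivity of the countably additive part $\pi_1$ gives $\langle \pi_1,x\rangle\ge\langle \pi_1,x\chi_{\Omega_n}\rangle=\langle \pi,x\chi_{\Omega_n}\rangle>\langle \pi,\omega(t)\rangle\ge\langle \pi_1,\omega(t)\rangle$.
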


\begin{proof}
\underline{Step 1}: By virtue of Proposition \ref{gal}(ii), let $\{ W^n \}_{n\in \N}$ be a Galerkin approximation scheme of $(L^\infty(\nu),\mathit{w}^*)$ such that $W^1\subset W^2\subset\cdots$ and $L^\infty(\nu)=\overline{\bigcup_{n\in \N}W^n}^{\,\mathit{w}^*}$, and $Q_n$ be a continuous projection of $L^\infty(\nu)$ onto $W^n$. Then $W^n_+=W^n\cap L^\infty_+(\nu)$ is a positive cone of $W^n$. Furthermore, $Q_n:L^\infty(\nu)\to W^n$ is a positive linear operator. As shown in the proof of Theorem \ref{WE1}, for every $n\in \N$ there exists a Walrasian equilibrium $(\lambda_n,f_n)\in ((W^n)^*_+\setminus \{ 0 \})\times G^1(\mu,W^n)$ with free disposal for $\E_n$. 

\underline{Step 2}:
Invoking Theorem \ref{kss} yields that there exist $f,g\in G^1(\mu,L^\infty(\nu))$ such that
$$
f(t),g(t)\in X(t), \quad (f(t),g(t))\in \mathit{w}^*\text{-}\mathrm{Ls}\left\{ (f_n(t),\omega_n(t)) \right\} \quad\text{a.e.\ $t\in T$}
$$
and 
$$
\left( \mathit{w}^*\text{-}\int fd\mu,\mathit{w}^*\text{-}\int gd\mu \right)\in \mathit{w}^*\text{-}\mathrm{Ls}\left\{ \left( \mathit{w}^*\text{-}\int f_nd\mu,\mathit{w}^*\text{-}\int\omega_n d\mu \right) \right\}. 
$$
Hence, we can extract a subsequence from $\{ (f_n,\omega_n) \}_{n\in \N}$ in $G^1(\mu,L^\infty(\nu))\times G^1(\mu,L^\infty(\nu))$ (which we do not relabel) such that $(\mathit{w}^*\text{-}\int f_nd\mu,\mathit{w}^*\text{-}\int \omega_nd\mu)\to (\mathit{w}^*\text{-}\int fd\mu,\mathit{w}^*\text{-}\int gd\mu)$ weakly$^*$. Note also that for a.e.\ $t\in T$ we can extract a subsequence from $\{ \omega_n(t) \}_{n\in \N}$ in $L^\infty(\nu)$ that converges weakly$^*$ to $g(t)$. Furthermore, by virtue of Theorem \ref{prj2}(ii), for a.e.\ $t\in T$ we can extract a subsequence from $\{ \omega_n(t) \}_{n\in \N}$ that converges weakly$^*$ to $\omega(t)$. Therefore, $g(t)=\omega(t)$ a.e.\ $t\in T$, and hence, $\mathit{w}^*\text{-}\int\omega_nd\mu \to \mathit{w}^*\text{-}\int\omega d\mu$ weakly$^*$. Since $\mathit{w}^*\text{-}\int f_nd\mu\le\mathit{w}^*\text{-}\int \omega_nd\mu$ for each $n\in \N$, we have $\mathit{w}^*\text{-}\int fd\mu\le\mathit{w}^*\text{-}\int\omega d\mu$ at the limit. Therefore, $f$ is an allocation with free disposal for $\E^G$. Since $0\ne \lambda_n\in (V^n)^*=(\mathcal{R}(I-P_n))^\perp\subset L^\infty(\nu)^*=\mathit{ba}(\nu)$, by the Krein--Rutman theorem (see \cite[Corollary V.5.4.2]{sw99}), $\lambda_n$ can be extended as a continuous positive linear functional to $L^\infty(\nu)$. Thus, we can normalize equilibrium price for $\E_n$ such that $\pi_n=\lambda_n/\| \lambda_n \|\in \Delta^*:=\{ \pi\in \mathit{ba}_+(\nu)\mid \| \pi \|=1 \}$. Since $\Delta^*$ is weakly$^*$ compact, we can extract a subsequence from $\{ \pi_n \}_{n\in \N}$ (which we do not relabel) that converges weakly$^*$ to $\pi\in \Delta^*$. Therefore, $\pi\in \mathit{ba}_+(\nu)\setminus \{ 0 \}$. 

\underline{Step 3}: 
We claim that: 
\begin{quote}
For a.e.\ $t\in T$: $x\,{\succ}(t)\,f(t)$ implies that $\langle \pi,x \rangle>\langle \pi,\omega(t) \rangle$.
\end{quote}
Suppose, by way of contradiction, that there exists $A\in \Sigma$ of positive measure with the following property: for every $t\in A$ there exists $y\in X(t)$ such that $y\,{\succ}(t)\,f(t)$ and $\langle \pi,y \rangle\le \langle \pi,\omega(t) \rangle$. Since $\langle \pi,\omega(t) \rangle>0$ by Assumption \ref{assmp2}(iv), it follows from the weak$^*$ continuity of ${\succsim}(t)$ that $\varepsilon y\,{\succ}(t)\,f(t)$ and $\langle \pi,\varepsilon y \rangle<\langle \pi,\omega(t) \rangle$ for some $\varepsilon\in (0,1)$. Hence, we may assume without loss of generality that  for every $t\in A$ there exists $y\in X(t)$ such that $y\,{\succ}(t)\,f(t)$ and $\langle \pi,y \rangle<\langle \pi,\omega(t) \rangle$. 
Define the multifunction $\Gamma:A\twoheadrightarrow L^\infty(\nu)$ by 
$$
\Gamma(t)=\{ x\in X(t)\mid x\,{\succ}(t)\,f(t),\ \langle \pi,x \rangle<\langle \pi,\omega(t) \rangle \}.
$$
Then $\Gamma$ is an integrably bounded multifunction with $y\in \Gamma(t)$. We claim that $\Gamma$ has measurable graph. Since $\Gamma(t)$ is the intersection of the multifunctions defined by $\Gamma_1(t):=\{ x\in X(t)\mid x\,{\succ}(t)\,f(t) \}$ and $\Gamma_2(t):=\{ x\in L^\infty(\nu)\mid \langle p,x \rangle<\langle p,\omega(t) \rangle \}$, we need to show that $\Gamma_1$ and $\Gamma_2$ have measurable graph. Since the weakly$^*$ compact subset $K$ of $L^\infty(\nu)$ is metrziable with respect to the weak$^*$ topology in view of the separability of $L^1(\nu)$ (see \cite[Theorem 3.16]{ru91} or \cite[IV.1.7]{sw99}), $K$ is a Polish space. Then the graph measurability of $\Gamma_2$ follows easily from the joint measurability of the Carath\'eodory function on $T\times K$ given by $(t,x)\mapsto \langle p,x \rangle-\langle p,\omega(t) \rangle$ (see \cite[Lemma 4.51]{ab06} or \cite[Lemma 8.2.6]{af90}). To show the graph measurability of $\Gamma_1$, let $\theta:T\times L^\infty(\nu)\to T\times L^\infty(\nu)\times L^\infty(\nu)$ be a mapping given by $\theta(t,x)=(t,x,f(t))$ and $\mathrm{proj}_{T\times L^\infty(\nu)\times L^\infty(\nu)}$ be a projection of $(T\times L^\infty(\nu))\times(T\times L^\infty(\nu)\times L^\infty(\nu))$ onto the range space $T\times L^\infty(\nu)\times L^\infty(\nu)$ of $\theta$. Since $\mathrm{proj}_{T\times L^\infty(\nu)\times L^\infty(\nu)}(\mathrm{gph}\,\theta)$ belongs to $\Sigma\otimes \mathrm{Borel}(L^\infty(\nu),\mathit{w}^*)\otimes \mathrm{Borel}(L^\infty(\nu),\mathit{w}^*)$ by the projection theorem (see \cite[Theorem III.23]{cv77}), the set defined by 
\begin{align*}
G:=\{ (t,x,y)\in T\times L^\infty(\nu)\times L^\infty(\nu)\mid x\,{\succ}(t)\,y \}
& \cap ((\mathrm{gph}\,X)\times L^\infty(\nu)) \\
& \cap \mathrm{proj}_{T\times L^\infty(\nu)\times L^\infty(\nu)}(\mathrm{gph}\,\theta)
\end{align*}
belongs to $\Sigma\otimes \mathrm{Borel}(L^\infty(\nu),\mathit{w}^*)\otimes \mathrm{Borel}(L^\infty(\nu),\mathit{w}^*)$ in view of Assumption \ref{assmp2}. Since $\mathrm{gph}\,\Gamma_1=\mathrm{proj}_{T\times L^\infty(\nu)}(G)$, again by the projection theorem, $\mathrm{gph}\,\Gamma_1$ belongs to $\Sigma\otimes \mathrm{Borel}(L^\infty(\nu),\mathit{w})$, where $\mathrm{proj}_{T\times L^\infty(\nu)}$ is a projection of $(T\times L^\infty(\nu))\times L^\infty(\nu)$ onto $T\times L^\infty(\nu)$. 

Let $h:A\to L^\infty(\nu)$ be a measurable selector from $\Gamma$. Invoking Theorem \ref{kss}, there exists a Gelfand integrable function $\hat{h}:A\to L^\infty(\nu)$ such that $\hat{h}(t)\in \mathit{w}^*\text{-}\mathrm{Ls}\,\{ h_n(t) \}$ and $\hat{h}(t)\in X(t)$ a.e.\ $t\in A$. Hence, for a.e.\ $t\in A$ there is a subsequence of $\{ h_n(t) \}_{n\in \N}$ in $L^\infty(\nu)$ converging weakly$^*$ to $\hat{h}(t)$. Furthermore, by virtue of Theorem \ref{prj1}(ii), there is a subsequence of $\{ h_n(t) \}_{n\in \N}$ converging weakly$^*$ to $h(t)$. We thus have $\hat{h}(t)=h(t)$ a.e.\ $t\in A$, and hence, $(f(t),h(t))\in \mathit{w}^*\text{-}\mathrm{Ls}\,\{ (f_n(t),h_n(t)) \}$ a.e.\ $t\in A$ in view of Step 1. Suppose that the set defined by
$$
\bigcup_{n\in \N}\{ t\in A\mid h_n(t)\,{\succ}_n(t)\,f_n(t), \,\langle p_n,h(t)\rangle<\langle p_n,\omega(t) \rangle \}
$$
is of measure zero. Then for each $n\in \N$: $f_n(t)\,{\succsim}(t)\,h_n(t)$ or $\langle \pi_n,h(t)\rangle\ge \langle \pi_n,\omega(t) \rangle$ a.e.\ $t\in A$. Since $\pi_n\to p$ weakly$^*$, passing to the limit along a suitable subsequence of $\{ (f_n(t),h_n(t)) \}$ in $L^\infty(\nu)\times L^\infty(\nu)$ yields $f(t)\,{\succsim}(t)\,h(t)$ or $\langle p,h(t)\rangle\ge \langle p,\omega(t) \rangle$ a.e.\ $t\in A$, a contradiction to the fact that $h$ is a measurable selector from $\Gamma$. Therefore, there exists $n\in \N$ such that $\{ t\in A\mid h_n(t)\,{\succ}_n(t)\,f_n(t), \,\langle \pi_n,h(t)\rangle<\langle \pi_n,\omega(t) \rangle \}$ is of positive measure. This is, however, impossible because $(\pi_n,f_n)$ is a Walrasian equilibrium for $\E_n$. Therefore, the claim is true.

\underline{Step 4}: It remains to show that $\langle \pi,f(t) \rangle\le \langle \pi,\omega(t) \rangle$ a.e.\ $t\in T$. If $f(t)$ is a satiation point for ${\succsim}(t)$, then Assumption \ref{assmp3}(vi) implies that $\langle \pi,f(t) \rangle\ge \langle \pi,\omega(t) \rangle$. If $f(t)$ is not a satiation point, then it belongs to the weak closure of the upper contour set $\{  y\in X(t)\mid y\,{\succ}(t)\,f(t) \}$ by Assumption \ref{assmp3}(vi). The claim shown in Step 3 implies that $\langle \pi,f(t) \rangle\ge \langle \pi,\omega(t) \rangle$. Integrating the both sides of this inequality yields $\int \langle \pi,f(t) \rangle d\mu\ge \int \langle \pi,\omega(t) \rangle d\mu$. On the other hand, as demonstrated in Step 2, $\mathit{w}^*\text{-}\int fd\mu\le \mathit{w}^*\text{-}\int \omega d\mu$, and hence, $\int  \langle \pi,f(t) \rangle d\mu=\int \langle \pi,\omega(t) \rangle d\mu$. Therefore, we must have the equality $\langle \pi,f(t) \rangle=\langle \pi,\omega(t) \rangle$ a.e.\ $t\in T$. Therefore, $(\pi,f)\in (\mathit{ba}_+(\nu)\setminus \{ 0 \})\times G^1(\mu,L^\infty(\nu))$ is a Walrasian equilibrium with free disposal for $\E^G$. 

\underline{Step 5}: By the Yosida--\hspace{0pt}Hewitt decomposition of finitely additive measures (see \cite[Theorems 1.22 and 1.24]{yh52}), $\pi$ is decomposed uniquely into $\pi=\pi_1+\pi_2$, where $\pi_1\ge 0$ is countably additive and $\pi_2\ge 0$ is purely finitely additive. (Here, $\pi_2$ is \textit{purely finitely additive} if every countably additive measure $\pi'$ on $\F$ satisfying $0\le \pi'\le \pi_2$ is identically zero.) Furthermore, there exists a sequence $\{ \Omega_n \}$ in $\F$ such that (a) $\Omega_n\subset \Omega_{n+1}$ for each $n=1,2,\dots$; (b) $\lim_n\pi_1(\Omega\setminus \Omega_n)=0$; (c) $\pi_2(\Omega_n)=0$ for each $n=1,2,\dots$. 

We claim that $(\pi_1,f)$ a Walrasian equilibrium with free disposal for $\E^G$. To this end, suppose that $x\,{\succ}(t)\,f(t)$. We need to demonstrate that $\langle \pi_1,x \rangle>\langle \pi_1,\omega(t) \rangle$. It follows from the definition of Walrasian equilibria that $\langle \pi,x \rangle>\langle \pi,\omega(t) \rangle$. Take any $\varphi\in L^1(\nu)$. Then we have
$$
\left| \int x\chi_{\Omega_n}\varphi d\nu-\int x\varphi d\nu \right|\le \| x \|_\infty\int|1-\chi_{\Omega_n}||\varphi|d\nu=\| x \|_\infty\int_{\Omega\setminus \Omega_n}|\varphi|d\nu\to 0. 
$$
Hence, $x\chi_{\Omega_n}\to x$ weakly$^*$ in $L^\infty(\nu)$.\footnote{More specifically, one can show that this convergence is in the Mackey topology on $L^\infty(\nu)$; see \cite[Appendix I(24)]{be72}.} Since $\langle \pi_2,x\chi_{\Omega_n} \rangle=\int x\chi_{\Omega_n}d\pi_2=\int_{\Omega_n}xd\pi_2=0$ by condition (c), we have $\langle \pi,x\chi_{\Omega_n} \rangle=\langle \pi_1,x\chi_{\Omega_n} \rangle+\langle \pi_2,x\chi_{\Omega_n} \rangle=\langle \pi_1,x\chi_{\Omega_n} \rangle$. In view of $x\ge x\chi_{\Omega_n}$, we obtain
\begin{align*}
\langle \pi_1,x \rangle=\left\langle \pi_1,x \right\rangle\ge \left\langle \pi_1,x\chi_{\Omega_n} \right\rangle=\langle \pi,x\chi_{\Omega_n} \rangle
& >\langle \pi,\omega(t)\rangle\ge \langle \pi_1,\omega(t)\rangle
\end{align*} 
as desired. This also implies that $\pi_1\ne 0$. Since $\pi$ is absolutely continuous with respect to $\nu$, the Radon--Nikodym derivative of $\pi_1$ is an equilibrium price in $L^1_+(\nu)\setminus \{ 0 \}$. 
\end{proof}

\section{Concluding Remarks}
In conclusion, we ask ourselves and the reader as to where we go next: what lacunae and open questions  remain in the subject?  One obvious opening that needs to be closed emerges when we  ask  whether the sufficiency of the saturation property, as brought out in the principal substantive results of this essay, Theorems \ref{WE1} and \ref{WE2},  can be complemented by the necessity of the property, as has been shown in the theory of large non-atomic games, following \cite{ks09}. This is a question of some urgency because it is only after answering it that one can confidently move on to the consideration of commodity spaces that are not circumscribed by the assumptions that we have worked under: separability and the non-empty  norm-interiority  of the non-negative orthant of the commodity spaces underlying the formalizations of the economies that we investigate. As regards the former, it has already been emphasized that a commodity space with a countable dense set  is a rather severe restriction on the cardinality of a commodity space, and may well require higher-order Maharam types of agent spaces if it is to be jettisoned; see for example \cite{ks13,ks14b,ks15,ks16b}. Here, the problem of measurable selectors in non-separable spaces can hardly  be bypassed and would have to be squarely and honestly faced; see \cite{ks16b} for one attempt. 

In this connection, it is worth pointing to the fact that one of the principal, though implied, subtexts of this essay is that infinite-dimensional Walrasian general equilibrium theory has by now reached a level of mathematical maturity that a distinction between the technical and the substantive, the mathematics and the economics, is parochial and constraining. It does no favor to either subject, and the perspective to the problem that  it brings, to insist on  a  separation of the  registers in this way. As such, the very basic notion of an {\it allocation,} a concept that we have formalized through either the Bochner or the  Gelfand integral, is intimately tied to the  {\it price-system}  that is invoked to value the bundles constituting it. The clean and independent split between the operation of summation, on the one hand,  and the notion of the valuation that one has chosen to work with on the other hand, can hardly be expected to hold in an investigation of  economies  with a continuum of agents and commodities. In short, what we are drawing attention to is our total silence on Pettis integration, and the substantive implications that would have to be faced once we consider it. 

It may be worthwhile to underscore the motivation behind  Pettis integration: what is so \lq natural' about Pettis integration? As the reader has already seen, there is a remarkable parallelism in the proof of existence of Walrasian equilibria in economies modelled on  separable Banach spaces and those modelled on $L^\infty,$ a parallelism that also extends to  the proof of the exact Fatou's lemma for Banach spaces and their dual spaces. This cries out for a synthesis, one that deals with Bochner and Gelfand integration simultaneously in a single framework. It is from this point of view that one is inevitably led to a space that is inclusive of both a Banach space with its norm topology, and the dual space endowed with its weak$^*\!$ topology.  This is to say that one is inevitably led to   a locally convex Hausdorff space, and to the integration of multifunctions  taking values in such a space. 
 Indeed, as demonstrated in \cite{ks15,ks16a}, the Lyapunov convexity theorem for dual spaces of a separable Banach space follows from that in lcHs under saturation. For the application of an approximate Fatou's lemma with Pettis integrals in lcHs to Walrasian equilibrium theory, see \cite{oz94,po97} for initial exploratory attempts.
 
 We close this essay with a final remark that returns to Galerkin approximations. It is fair to say that Walrasian general equilibrium theory  in its computable manifestation has largely been confined to finite representation relating both to the set of agents and commodities. In this essay, we have illustrated the importance of finite-dimensional truncations of  an infinite-dimensional commodity space, but it stands to reason that one go the full stretch and exploit  double-barrelled truncations, ones that pertain to both the set of commodities {\it and} the set of agents.  After all, the theory of Lebesgue integration begins with simple functions, which is to say in our context, that it begins with a finite partition of the continuum of   agents, all making the same choice, and that therefore it begins with a finite set of agents.\fn{This idea was first given prominence in \cite{hi70b}; also see the text \cite{hi74}.}  We leave  this consideration of the ideal as emerging out of two truncations --- out of  a double-tremble, so to speak ---  for  future investigation.

\end{document}